\documentclass[final]{siamonline250211}
\usepackage{tabularx}
\usepackage{mathtools}
\usepackage{amssymb}
\usepackage{booktabs}
\usepackage{array}
\usepackage{enumitem}
\setlist[enumerate]{leftmargin=2.5em}
\usepackage{graphicx}
\usepackage{algorithm}
\usepackage{algorithmic}
\usepackage{placeins}
\usepackage{changepage}
\usepackage{xcolor}
\usepackage[labelstoglobalaux]{bibunits}
\defaultbibliographystyle{siamplain}
\usepackage{macros}
\usepackage{pgfplots}
\usepgfplotslibrary{groupplots}
\pgfplotsset{compat=1.18}
\newsiamremark{remark}{Remark}
\title{Plug-and-Play Methods Provably Converge Even with Improperly Trained Denoisers: Convergence by Architectural Design\thanks{The authors acknowledge support from NVIDIA Corporation through the NVIDIA Academic Grant Program for the donation of the RTX PRO 6000 Blackwell Max-Q Workstation Edition GPUs used in this research as well as the support from a Hellman Fellowship from the University of California.}}
\author{Henry Pritchard\thanks{The authors are with the Department of Electrical and Computer Engineering, University of California, San Diego (\email{hepritchard@ucsd.edu},
\email{rahul@ucsd.edu}).} \and Rahul Parhi\footnotemark[2]}
\begin{document}
\begin{bibunit}[siamplain]

\newlist{hypotheses}{enumerate}{1}
\setlist[hypotheses]{
    label=\normalfont S\arabic*.,
    ref=S\arabic*,
    leftmargin=3em,
    labelsep=0.5em,
    align=left
}
\crefformat{hypothesis}{(#2#1#3)}
\Crefformat{hypothesis}{(#2#1#3)}
\maketitle

\begin{abstract}
Plug-and-Play (PnP) methods replace proximal operators with learned denoisers, which produce state-of-the-art reconstruction quality, but sacrifice the variational interpretation and convergence guarantees of proximal methods. Learned Proximal Networks (LPNs) address this problem by designing the denoiser architecture so that it is exactly the proximal operator of a regularizer. In this work, we clarify the theoretical foundations of LPNs and extend the framework to a broader class of activation functions. We then study two practical mechanisms for controlling the learned prior: (i) averaging the LPN with the identity, a common heuristic in PnP methods, and (ii) directly scaling the implicit regularizer induced by the proximal operator. In particular, we characterize the regularizer induced by averaging and develop a convergent method for evaluating the scaled proximal operator.
\end{abstract}

\section{Introduction}\label{section:introduction}
Inverse problems are ubiquitous in imaging, signal processing, and statistics. Given a \emph{ground truth} $\bar{x} \in \R^n$, \emph{forward operator} $A:\R^n \to \R^m$, and \emph{measurement noise} $\epsilon \in \R^m$, the \emph{observation} $y \in \mathbb{R}^m$ is obtained via
\begin{align}\label{eq:inverse_problem}
    y = A\bar{x} + \epsilon.
\end{align}
The goal is to recover an estimate of $\bar{x}$, which we call $\hat{x}$, from $y$. For most inverse problems of interest, $A$ is underdetermined, so the least squares solution is typically unstable. For this reason, the problem is typically posed as the minimization of a \emph{regularized objective}:\begin{align}\label{eq:regularized_objective}
    \hat{x} \in \argmin_{x\in \R^n} F(x), \quad F(x) \coloneqq \eta \fid(x) + \varphi(x)
\end{align}
where $\fid: \R^n \to \R$ is a \emph{data-fidelity term}, $\varphi:\R^n \to \R$ is a \emph{regularizer}, and $\eta>0$ balances the two. The data fidelity term encourages $A\hat{x} \approx y$ in an appropriate sense, often taken to be least-squares loss $\fid(x) = \tfrac{1}{2}\|Ax-y\|^2$. The regularizer encourages prior beliefs about the class of \emph{admissible solutions}. For instance, the $\ell^1$ norm, $\varphi(x) = \|x\|_1$, promotes sparsity and underlies much of the theory of compressed sensing~\cite{candes2006robust,donoho2006compressed, mesforush2026lasso}. Total Variation (TV), $\mathrm{TV}(x) \coloneqq\sum_{i=1}^{n-1} |x_{i+1} - x_i|$, encourages solutions to be piece-wise constant and is widely used in image denoising~\cite{chambolle2004algorithm,marquina2000explicit, beck2009fast}.

The proximal operator (\cref{def:proximal_operator}) is a central object for solving~\cref{eq:regularized_objective}. Since gradient steps are not appropriate for nonsmooth regularizers, the proximal operator, $\prox_\varphi$ is used to handle $\varphi$ implicitly. $\prox_\varphi$ is a denoising step, mapping corrupted inputs to nearby signals that better agree with the prior $\varphi$, which motivates  extensions of proximal algorithms, where the proximal operator is replaced by a denoiser.

In this work, we revisit Learned Proximal Networks (LPNs), introduced in~\cite{fang2024whats}, which use a simple architectural constraint to guarantee that a learned denoiser is a proximal operator. This places the resulting plug-and-play methods back within the classical framework of proximal methods. Unlike approaches whose convergence guarantees rely on unverifiable assumptions about the denoiser, training heuristics, or an idealized denoising model, the LPN proximal structure is enforced exactly by the architecture. Consequently, this guarantee is largely independent of the training procedure, including the choice of training data, loss function, and optimization algorithm. We build on this foundation by clarifying the underlying theory, extending the framework to a broader class of architectures, and establishing analogous guarantees for averaged LPNs, which interpolate the learned proximal map with the identity, and scaled LPNs, which realize proximal operators associated with scaled versions of the learned regularizer.

\subsection{A Bayesian Perspective}\label{subsection:bayesian_perspective}
While the regularized objective in~\cref{eq:regularized_objective} is meaningful from a variational perspective, it also admits a natural probabilistic interpretation. If $\bar{x}$ is a random variable with density $p$, related to the observation $y$ through likelihood $p(y\mid \cdot )$, the \emph{maximum a posteriori} (MAP) estimate is
\begin{align}\label{eq:map_estimator}
    \hat{x}_{\mathrm{MAP}}(y) \in \argmin_{x\in\R^n}
     -\log p(y\mid x) - \log p(x).
\end{align}
This is exactly the regularized objective in~\cref{eq:regularized_objective} with data-fidelity term $\eta \fid(x) = -\log p(y\mid x)$ and regularizer $\varphi(x) = -\log p(x)$. In particular, the least-squares data-fidelity term is realized when the measurement noise is assumed to be Gaussian. 

The same probabilistic view applies to the proximal operator (\cref{def:proximal_operator}). If $p(x) \propto \mathrm{e}^{-\gamma \varphi(x)}$, and $y$ is obtained via~\cref{eq:inverse_problem} with measurement operator $A=I_n$, under measurement noise $\epsilon \sim \mathcal{N}(0,\eta I_n)$, then the MAP estimator of $x$ is 
\begin{align}
    \hat{x}_{\mathrm{MAP}}(y) \in \argmin_x\left\{\frac{1}{2\eta}\|x-y\|^2 + \gamma\varphi(x)\right\} = \prox_{\gamma \eta \varphi}(y).
\end{align} Therefore, the proximal operator is a MAP denoising step under a prior proportional to $\mathrm{e}^{-\gamma \varphi(x)}$, with additive Gaussian measurement noise of variance $\eta$. Accordingly, the effective prior strength is controlled by $\gamma\eta$. Larger $\eta$ means heavier measurement noise which increases the reliance on the prior, while larger $\gamma$ corresponds to a stronger prior.

\subsection{Proximal Gradient Descent} \label{subsection:pgd}
Gradient Descent (GD) is a classical technique for minimizing \emph{smooth} (Lipschitz gradient) objective functions, with iterations
\begin{align}\label{eq:gradient_descent}
    x_{k+1} \coloneqq x_k - \gamma \nabla F(x_k), \tag{GD}
\end{align}
for a step size $\gamma>0$. When $F$ is smooth and bounded below, the objective values $F(x_k)$ converge for an appropriately chosen step size. However, objectives of the form~\cref{eq:regularized_objective} often involve nonsmooth regularizers, making gradient descent inappropriate. Typically, the data-fidelity term $\fid$ is smooth with a straightforward gradient, while the regularizer $\varphi$ is not. Proximal Gradient Descent (PGD) exploits the structure by taking a gradient step on $\fid$, followed by a proximal step on $\varphi$. The resulting iteration is 
\begin{align}\label{eq:pgd}\tag{PGD}
    x_{k+1} \coloneqq \prox_{\gamma \varphi} (x_k -   \gamma\eta \nabla \fid (x_k)),
\end{align}
where $\gamma > 0$ is the step size and $\eta > 0$ is the balancing term from the objective. See~\cite{combettes2011proximal,beck2009fast,attouch2013convergence} for classical convergence results for PGD.

\subsection{Plug-and-Play Methods} \label{subsection:PnP}
Proximal operators are essentially denoisers that favor a certain prior, as discussed in~\cref{subsection:bayesian_perspective}. It is therefore natural to replace them with a more powerful trained denoiser, which we denote $D_\sigma$, that need not arise from any explicit regularizer.\footnote{The $\sigma$ in $D_\sigma$ signifies that the denoiser should appropriately match the level of noise in~\cref{eq:inverse_problem}.} This substitution yields Plug-and-Play (PnP) methods, introduced in~\cite{venkatakrishnan2013plug}.\footnote{PnP was originally introduced in the context of ADMM~\cite{venkatakrishnan2013plug}, but the term is now used more broadly for methods that replace a proximal operator with an off-the-shelf denoiser.} For example, the PnP variant of the PGD iteration is given by 

\begin{align}\label{eq:PnP-PGD}\tag{PnP-PGD}
    x_{k+1} \coloneqq D_\sigma (x_k -  \eta \nabla \fid (x_k)).
\end{align}
Although PnP methods often achieve excellent empirical performance, the denoiser is not generally the proximal operator of any explicit regularizer. Consequently a PnP algorithm is generally not associated with the minimization of an explicit objective $F$, and loses the variational interpretation that motivates classical methods. 

 Without additional assumptions, a PnP algorithm's behavior can be assessed only by inspecting the iterates, which may give little insight into whether the algorithm is making meaningful progress or becoming unstable. In particular, oscillatory or non-monotone behavior may reflect divergence, convergence to an undesirable solution, or merely transient behavior along a convergent trajectory. A convergence guarantee with an explicit rate provides a principled way to assess progress in such an algorithm. Consequently, a substantial body of work studies conditions under which PnP methods converge. As we discuss in the following section, however, the resulting guarantees that currently exist in the literature are unsatisfying. Indeed, they generally require conditions on $D_\sigma$ that cannot be verified in practice. 

\subsection{Existing Methods}\label{subsection:existing_methods}
The central challenge in theoretical PnP analysis is to identify conditions on the denoiser $D_\sigma$ and data-fidelity term $\fid$ that are both (1) sufficient for convergence and (2) enforceable in practice without sacrificing performance. Much of the existing theory focuses primarily on the first requirement. These works establish convergence under conditions that are either impossible to verify in practice, enforced only approximately through training heuristics, or achieved through constraints that can substantially limit denoiser expressivity. We focus in particular on the learned-denoiser setting, where $D_\sigma$ is represented by a deep neural network (DNN).

One popular line of work seeks to constrain the Lipschitz constant of a neural network. Gradient-step denoisers~\cite{hurault2021gradient,hurault2022proximal,hurault2024convergent}, nonexpansive or averaged denoisers~\cite{terris2020building, ryu2019plug, liu2021recovery, sun2021scalable, sun2019online}, monotone operators~\cite{pesquet2021learning}, and deep equilibrium models~\cite{gilton2021deep} each develop distinct convergence theory, but all require control of the Lipschitz constant of a neural network. Most activations are already $1$-Lipschitz, so the problem reduces to controlling the Lipschitz constant of large weight matrices and convolutional layers. In theory, this condition can be enforced by applying spectral normalization~\cite{ryu2019plug} or Bj\"orck orthonormalization~\cite{anil2019sorting} to each layer of the network. However, these are iterative procedures that enforce the desired constraint only in the limit. Running them to convergence throughout training for each layer is prohibitively costly, so they are generally truncated, yielding a method that amounts to a heuristic, rather than a guarantee. Further, we note that just \emph{computing} the Lipschitz constant of even a two-layer network is NP-hard~\cite{virmaux2018lipschitz}, leaving little hope of exactly controlling it for modern deep denoisers.

Another line of work, known as Regularization by Denoising (RED)~\cite{romano2017little}, is based on the observation that the noise removed from a signal should be approximately independent of the signal, i.e.,  $\langle x-D_\sigma(x),x\rangle \approx 0$ for an effective denoiser. This motivates the RED regularizer as $\rho_{\mathrm{RED}}(x) \coloneqq \tfrac{1}{2}\langle x-D_\sigma(x),x\rangle$. Under suitable assumptions on $D_\sigma$, $\rho_{\mathrm{RED}}$ is convex and has a gradient that is simple to evaluate: $\nabla \rho_{\mathrm{RED}}(x) = x-D_{\sigma}(x)$. Convergence of PGD and ADMM schemes is therefore nearly immediate from existing theory. 

However, the assumptions of the RED framework are even more restrictive than the Lipschitz assumptions above. Indeed, strong passivity and Jacobian symmetry ($JD_\sigma(x)=JD_\sigma(x)^T$) imply that $D_\sigma$ is 1-Lipschitz. Further, Jacobian symmetry is essential, as the authors of~\cite{reehorst2018regularization} showed that there is no function $\rho$ such that $\nabla \rho(x) = x-D_\sigma(x)$ if $JD_\sigma(x)$ is not symmetric. They further showed that the most common denoisers fail to have symmetric Jacobians. Therefore, RED provides an elegant variational interpretation for only an extremely restricted class of denoisers. Later, the RED framework was extended to a broader class of denoisers in~\cite{cohen2021regularization}, but this method still requires properties of the denoiser that are plausible but unverifiable in practice, namely demicontractivity and certain fixed-point assumptions.

Other convergence results avoid these Lipschitz-type restrictions entirely. For example, rigorous convergence theory exists for linear and kernel denoisers~\cite{gavaskar2020plug,gavaskar2021plug}, for which the relevant properties are verified by construction. However, restricting the denoiser to a linear operator substantially limits its expressivity relative to modern learned denoisers. At the opposite extreme, convergence has been established for exact MMSE denoisers~\cite{xu2020provable,pritchard2026nonasymptotic}, which can represent much richer image priors but are generally only \emph{approximated} by a learned model. Finally, fixed-point convergence has been established under a bounded-residual assumption~\cite{chan2016plug}, requiring $\|D_\sigma(x)-x\|^2 \leq n\sigma^2C$ for some $C$ independent of $n$ and $\sigma$. This global property is milder, but still cannot be verified in practice.

\subsection{Contributions}\label{subsection:contributions}
In this work, we revisit the Learned Proximal Network (LPN) framework introduced in~\cite{fang2024whats}, in which the denoiser used within a PnP method is constrained to be a proximal operator. Given an input convex neural network (ICNN) $\psi_\theta:\R^n \to \R$ and a strong convexity parameter $0<\alpha<1$, we define the LPN to be $\lpn(x)\in\partial\potential(x)$, where $\potential \coloneqq \psi_\theta+\frac{\alpha}{2}\|\cdot\|^2$ is the \emph{potential function} and $\partial \potential$ is its convex subdifferential (\cref{def:subgradient}). Classical characterizations of proximal operators then guarantee that
\begin{equation}
\lpn(x)\in\prox_{\reg}(x),
\end{equation}
where $\reg\coloneqq\potential^*-\tfrac{1}{2}\|\cdot\|^2$ is a data-driven regularizer implicit in the LPN. Under mild assumptions on $\psi_\theta$ that are satisfied by all ICNNs built with standard activations, $\reg$ is continuously differentiable, coercive, and satisfies the KL property~\cref{eq:kl_inequality}. The desired convergence results therefore follow from the classical framework of proximal methods. Our contributions build on this framework:

\paragraph{Clarifying the theoretical foundations of LPNs}
In~\cref{section:preliminaries} and~\cref{section:LPNs}, we clarify the mathematical foundations of the LPN construction and show that its proximal interpretation follows directly from classical results.

\paragraph{LPNs using a broader class of activation functions}
Whereas the original LPN formulation focuses on softplus activations, in~\cref{section:LPNs} we extend the framework to a substantially broader class that includes non-differentiable activations. This includes nonsmooth activations such as ReLU, as well as smooth alternatives such as Huberized ReLU. We establish convergence of the resulting LPN-PGD algorithm~\cref{alg:lpn_pgd_vanilla} in~\cref{prop:pgd_convergence}.

\paragraph{A variational interpretation of denoiser averaging}
When a denoiser is too strong, a common approach in PnP methods is to average it with the identity. Given an LPN $\lpn$ and $\lambda\in(0,1)$, we show that $\lpn^{(\lambda)} \coloneqq (1-\lambda)I+\lambda\lpn$ still falls within the LPN framework in~\cref{prop:closure_of_lpns}, and characterize precisely how this averaging operation relaxes the underlying learned prior and the associated PnP algorithm in~\cref{cor:averaged_lpn}.

\paragraph{Direct scaling of the learned regularizer}
In~\cref{subsection:regularizer_scaling}, we present a method for directly adjusting the strength of the prior encoded by an LPN by evaluating $\prox_{\gamma\reg}$ for $0<\gamma\leq1$. Although an LPN directly evaluates only $\prox_{\reg}$, we show that $\prox_{\gamma\reg}(x)$ can be recovered by solving a strongly convex auxiliary problem using only evaluations of the original LPN. We then develop a method that permits the auxiliary problem to be approximately solved, while still converging to a critical point of the scaled objective through the machinery of inexact proximal gradient descent (\cref{alg:lpn_pgd_scaled}). The resulting convergence guarantee is given in~\cref{prop:scaled_pgd_convergence}.

\paragraph{Full-image LPNs for MRI and CT reconstruction}
In~\cref{section:experiments}, we demonstrate the resulting framework on accelerated multicoil MRI and on low-dose and sparse-view computed tomography inverse problems. This departs from the patchwise implementation used for the large-scale MayoCT experiments in \cite{fang2024whats}, where a $128\times128$ LPN was applied to overlapping windows of a larger image and the reconstructed patches were averaged. Crucially, the patchwise application of an LPN is neither an LPN nor a proximal operator; it therefore discards the architectural guarantee on which the convergence theory rests. By instead evaluating a single full-image LPN, we retain the exact proximal interpretation guaranteed by the theory. We present results for ordinary proximal gradient descent, as well as the averaged and scaled approaches.

\section{Preliminaries}\label{section:preliminaries}
In this section, we collect select definitions and results that are used in the remainder of the paper. We restrict attention to $\R^n$, although many of the results hold in greater generality. $\|\cdot\|$ and $\langle \cdot,\cdot\rangle$ always denote the Euclidean norm and
inner product. We begin with the proximal operator, the main focus of this work.

\begin{definition}[Proximal operator]\label{def:proximal_operator}
Let $h: \R^n \to \widebar{\R}$ be proper and lower-semicontinuous with $\gamma>0$. The \emph{proximal operator} of $h$ is
\begin{align}\label{eq:proximal_operator}
    \prox_{\gamma h} (x) \coloneqq \argmin_{z\in \R^n}\left\{h(z)+\frac{1}{2\gamma}\|z-x\|^2\right\}.
\end{align}
\end{definition}
The proximal operator balances decreasing $h$ with remaining close to the current point. It is related to a standard gradient descent step in the following way. For a differentiable and convex function $h:\R^n \to \R$, gradient flow is defined to be \begin{equation}\label{eq:gradient_flow}
    \dot{x}(t) = -\nabla h(x(t)), \quad t\geq 0. \tag{GF} 
\end{equation}
One can show convergence of~\cref{eq:gradient_flow} under mild conditions, aided primarily by the fact that $h(x(t))$ is non-increasing along the trajectory, since \begin{equation}
    \frac{d}{dt}h(x(t)) = \langle \nabla h(x(t)),\dot{x}(t)\rangle  = -\|\nabla h(x(t))\|^2 \leq 0.
\end{equation}
A \emph{standard} gradient step is obtained by applying a forward Euler discretization to~\cref{eq:gradient_flow}:\begin{equation}
    \frac{x_{k+1}-x_k}{\gamma} = -\nabla h(x_k),
\end{equation}
where $\gamma>0$ is the step size. This is an explicit step: the gradient is evaluated at the current iterate $x_k$. In contrast, a proximal operator is obtained from a backward Euler discretization:
\begin{equation}\label{eq:prox_optimality}
    \frac{x_{k+1}-x_k}{\gamma} = -\nabla h(x_{k+1}).
\end{equation}
Rearranging gives $\nabla h(x_{k+1}) + \tfrac{1}{\gamma}(x_{k+1}-x_k) = 0$, the first order optimality condition for~\cref{eq:proximal_operator}, meaning $x_{k+1} = \prox_{\gamma h} (x_k)$. Because a proximal step evaluates the gradient implicitly at the future iterate, the update accounts for the geometry of the objective at the next iterate. This implicit correction often makes proximal methods more stable than explicit gradient methods. Proximal operators can also be interpreted in terms of Moreau envelopes, defined next.
\begin{definition}[Moreau envelope]
    Take $h:\R^n \to \R$ with $\gamma>0$. The Moreau Envelope of $h$ is given by\begin{equation}
        M_{\gamma}h (y) \coloneqq \inf_x\left\{h(x) + \frac{1}{2\gamma}\|x-y\|^2 \right\}.
    \end{equation}
\end{definition}
When $\prox_{\gamma h}$ is single-valued and continuous, it can be written as a gradient step on $M_\gamma h$: $\prox_{\gamma h}(x) = x-\gamma \nabla M_\gamma h(x)$.\footnote{\cite[Theorem II.6]{pritchard2026nonasymptotic} proves this exact formulation, but this identity is well-known in the literature under various assumptions on $h$.} In this way, a proximal operator is a \emph{smoothed} gradient step. We next recall the convex conjugate.

\begin{definition}[Convex conjugate] Let $\Psi:\R^n\to\widebar{\R}$ be proper and convex. The \emph{convex conjugate} $\Psi^*$ is 
\begin{equation}\label{eq:convex_conjugate} \Psi ^*(y) \coloneqq \sup_{x\in\R^n} \left\{ \langle x,y\rangle - \Psi(x) \right\}. \end{equation} 
\end{definition} 
The convex conjugate can be defined for a wide variety of functions, not just convex ones. When it is well-defined, $\Psi^*$ is always convex, even when $\Psi$ is not. Additionally, when $\Psi$ is convex, proper, and lower semicontinuous, the Fenchel-Moreau theorem states that biconjugation recovers the original function, i.e., $(\Psi^*)^* = \Psi$~\cite{rockafellar1997convex}[Theorem 12.2]. Another useful identity we make use of is that for strongly convex and differentiable $\Psi$,
\begin{equation}\label{eq:conjugate_gradient_identity}
    \nabla \Psi^* = (\nabla \Psi)^{-1}.
\end{equation}
See, for example,~\cite[Theorem 26.5]{rockafellar1997convex}. We next define the subgradient and subdifferential, a generalization of the gradient for convex functions. 

\begin{definition}[Subgradient]\label{def:subgradient}
Let $\Psi:\R^n\to\widebar{\R}$ be proper and convex, and let $x\in\dom(\Psi)$. A vector $g\in\R^n$ is a (convex) \emph{subgradient} of $\Psi$ at $x$ if \begin{equation}
\Psi(y)
\geq
\Psi(x)+\langle g,y-x\rangle
\quad \forall y\in\R^n.
\end{equation}
The set of all such vectors is called the (convex) \emph{subdifferential} at $x$, denoted $\partial \Psi(x)$. 
\end{definition}
The convex subdifferential, which we simply refer to as the subdifferential, is the only notion of subdifferential needed in this work. It also admits a simple geometric interpretation: $\partial \Psi(x) $ contains the slopes of all planes that support $\Psi$ at $x$. The convex subdifferential is always a convex set, and when $\Psi$ is differentiable at $x$, $\partial\Psi(x)$ simply reduces to the singleton $\{\nabla \Psi(x)\}$. The subdifferential can also be generalized to a wider class of nonconvex functions. For example, the Fréchet variant $\hat{\partial} \Psi(x)$ extends the notion of a differential to the slopes of all planes that \emph{locally} support $\Psi$ at $x$. See~\cite[Chapter 8]{rockafellar1998variational} for more detail. We next present a result that relates proximal operators to the subdifferentials of convex functions. This connection was first made in~\cite{gribonval2020characterization}.

\begin{lemma}\label{lem:coercive_regularizer}
Consider $f:\R^n \to \R^n$ and convex lower semi-continuous $\Psi:\R^n \to \R$. Then if $f(x) \in \partial \Psi(x)$ for all $x \in \R^n$, then $f(x) \in \prox_{\varphi}(x)$ for all $x\in \R^n$, where 
\begin{equation}\label{eq:bar_phi}
    \varphi \coloneqq \Psi^* - \frac{1}{2}\|\cdot\|^2.
\end{equation}
Further, if there exists some $\alpha \in (0,1)$ and finite $K,L$ such that $\Psi(x) \leq \tfrac{\alpha}{2}\|x\|^2 + L\|x\| + K$ for all $x\in \R^n$, then $\varphi$ is coercive.
\end{lemma}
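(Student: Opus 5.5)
The first claim is a Fenchel duality argument in three steps. Fix $x$ and set $z \coloneqq f(x)\in\partial\Psi(x)$.

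\emph{Step 1 (duality).} Since $\Psi$ is finite, convex and lower semicontinuous, Fenchel--Young equality and the Fenchel--Moreau theorem give
\begin{equation*}
\Psi(x)+\Psi^*(z)=\langle x,z\rangle
\quad\text{and}\quad
x\in\partial\Psi^*(z).
\end{equation*}
In particular $\Psi^*(z)<\infty$, so $z\in\dom\Psi^*$. For any $w$, the subgradient inequality for $\Psi^*$ at $z$ reads $\Psi^*(w)\ge \Psi^*(z)+\langle x,w-z\rangle$.

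\emph{Step 2 (rewrite the prox objective).} With $\varphi=\Psi^*-\tfrac12\|\cdot\|^2$, the objective defining $\prox_\varphi(x)$ at a point $w$ is
\begin{equation*}
\varphi(w)+\tfrac12\|w-x\|^2=\Psi^*(w)-\langle w,x\rangle+\tfrac12\|x\|^2 .
\end{equation*}
The quadratic terms in $w$ cancel. So minimizing over $w$ is the same as minimizing $\Psi^*(w)-\langle w,x\rangle$.

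\emph{Step 3 (conclude).} The inequality from Step 1 says exactly that $\Psi^*(w)-\langle w,x\rangle\ge \Psi^*(z)-\langle z,x\rangle$ for all $w$. Hence $z=f(x)\in\prox_\varphi(x)$. Note that $\varphi$ is proper and lower semicontinuous, because $\Psi^*$ is, so the prox is well defined. I do not expect an obstacle here; the only care needed is to work with extended-valued $\Psi^*$ and $\varphi$.

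For coercivity, the plan is to lower-bound $\Psi^*$ by conjugating the assumed quadratic upper bound, since conjugation reverses order. Let $g(u)=\tfrac{\alpha}{2}\|u\|^2+L\|u\|+K$. Then $\Psi\le g$ gives $\Psi^*\ge g^*$. I would compute $g^*$ by restricting to $u=t\,y/\|y\|$ with $t\ge0$:
\begin{equation*}
g^*(y)\ \ge\ \sup_{t\ge 0}\Bigl\{t\|y\|-\tfrac{\alpha}{2}t^2-Lt\Bigr\}-K
\ =\ \frac{\bigl((\|y\|-L)_+\bigr)^2}{2\alpha}-K .
\end{equation*}
Here $(\cdot)_+$ denotes the positive part.

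Combining the two bounds gives
\begin{equation*}
\varphi(y)\ \ge\ \frac{\bigl((\|y\|-L)_+\bigr)^2}{2\alpha}-\frac{\|y\|^2}{2}-K .
\end{equation*}
As $\|y\|\to\infty$, the right-hand side behaves like $\tfrac12\bigl(\tfrac1\alpha-1\bigr)\|y\|^2-\tfrac{L}{\alpha}\|y\|+O(1)$. This tends to $+\infty$ because $\alpha<1$, so $\varphi$ is coercive.

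The main point requiring attention is this last step: the conjugate bound must be quadratic with constant $1/\alpha$, strictly larger than $1$, so that it dominates the subtracted $\tfrac12\|y\|^2$. That is precisely where $\alpha\in(0,1)$ enters.
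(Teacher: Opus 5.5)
Your proof is correct and follows essentially the same route as the paper: both rewrite the prox objective as $\Psi^*(w)-\langle w,x\rangle$ plus a constant and use the Fenchel--Young equality case, and both get coercivity by lower-bounding $\Psi^*$ through the assumed quadratic majorant of $\Psi$. The differences are cosmetic. The paper reads off $\prox_\varphi(x)=\partial\Psi(x)$ directly from the Fenchel--Young inequality $\Psi^*(w)\ge\langle w,x\rangle-\Psi(x)$, which follows from the definition of $\Psi^*$, so your appeal to Fenchel--Moreau is unnecessary. For coercivity, the paper evaluates the supremand at $y/\alpha$ rather than optimizing along the ray, which gives the same leading term $\frac{1-\alpha}{2\alpha}\|y\|^2$.
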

\begin{proof}
From the definition, 
    \begin{align}
        \prox_\varphi(y) 
        = \argmin_{x}\left\{\Psi^*(x) -\langle x,y\rangle +\frac{1}{2}\|y\|^2  \right\} = \argmin_{x}\left\{\Psi^*(x) -\langle x,y  \rangle\right\}.
    \end{align}
    By Fenchel-Young, $\Psi^*(x) -\langle x,y\rangle \geq -\Psi(y)$, with exact equality if and only if $x \in \partial \Psi(y).$ We can therefore conclude that $\prox_\varphi(y)= \partial \Psi(y)$. Next, evaluating the supremand defining $\Psi^*$~\cref{eq:convex_conjugate} at $x/\alpha$ gives 
    \begin{align}
        \Psi^*(x) \geq \langle x,x/\alpha\rangle -\Psi(x/\alpha) &\geq \frac{1}{\alpha}\|x\|^2 - \frac{\alpha}{2}\left\| \frac{x}{\alpha}\right\|^2 - L\left\|\frac{x}{\alpha}\right\|-K
        \\&=\frac{1}{2\alpha}\|x\|^2 -\frac{L}{\alpha}\|x\|-K,
    \end{align}
    by the upper bound assumed on $\Psi$. Therefore, $\varphi(x) \geq \tfrac{1-\alpha}{2\alpha}\|x\|^2-\tfrac{L}{\alpha}\|x\|-K$, an upward facing quadratic, since $\tfrac{1-\alpha}{2\alpha}>0$. Coercivity of $\varphi$ follows. 
\end{proof}

It is straightforward to constrain a real-valued neural network $\Psi_\theta$ to be convex, as we shall detail in~\cref{section:LPNs}. If we take $y\in \partial \Psi(x)$, then $y \in \prox_\varphi(x)$ for some $\varphi$. Immediately, from~\cref{def:proximal_operator}, 
\begin{equation}\label{eq:prox_decrease}
     \varphi(y) + \frac{1}{2}\|x-y\|^2 \leq  \varphi(x).
\end{equation} 
Thus, descent is built directly into the architecture. If application of the proximal operator is preceded by a gradient step on some $L_f$-smooth term $f$, the resulting update retains a corresponding descent property. Indeed, suppose we take $x_{k+1} \in \prox_{\varphi}(x_k-\eta \nabla f(x_k))$ where $f$ is $L_f$-smooth and $\eta < 1/L_f$. Then by  \cref{eq:prox_decrease} with $y=x_{k+1}$, $x = x_k-\eta \nabla f(x_k)$, and the descent lemma~\cite[Lemma 5.7]{beck2017first} we have
\begin{equation}\label{eq:generic_decrease}
    F(x_k) - F(x_{k+1}) \geq \frac{1-\eta L_f}{2}\|x_{k+1}-x_k\|^2, \quad F \coloneqq \eta f + \varphi.
\end{equation}
Since the ICNNs we consider (\cref{def:ICNN}) satisfy the quadratic-growth condition in~\cref{lem:coercive_regularizer}, $\varphi$ is coercive and hence bounded below. Convergence of $\{F(x_k)\}$ then follows since $f$ is typically assumed to be bounded below. Summing~\cref{eq:generic_decrease} gives $\sum_k\|x_{k+1}-x_k\|^2 < \infty$, and the KL property upgrades this to convergence of the iterates themselves, outlined in~\cref{prop:generic_convergence_kl}. This argument is the template for the convergence results in this paper. Critically, no step will depend on how the network was trained: the guarantee follows from the ICNN architecture alone.

Finally, we show that when the convexity of $\Psi$ is upgraded to strong convexity, $\varphi$ in~\cref{eq:bar_phi} is differentiable. This allows us to analyze objectives involving $\varphi$ using ordinary gradients, rather than subgradients, substantially simplifying the arguments that follow. Recall that a function is $L$-smooth if it has an $L$-Lipschitz gradient.
\begin{lemma}\label{lem:C_1_regularizer}
    Consider $\Psi$ and $\varphi$ as in~\cref{lem:coercive_regularizer}. If $\Psi$ is $\alpha$-strongly convex for some $\alpha>0$, then $\varphi$ is $(1+1/\alpha)$-smooth. 
\end{lemma}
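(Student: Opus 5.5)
The plan is to reduce the claim to the classical duality between strong convexity and smoothness. Since $\Psi$ is convex, real-valued and lower semicontinuous, and moreover $\alpha$-strongly convex, its conjugate $\Psi^*$ is finite everywhere. Indeed, $\langle x,y\rangle-\Psi(x)\le \langle x,y\rangle-\Psi(0)-\langle g_0,x\rangle-\tfrac{\alpha}{2}\|x\|^2$ for any $g_0\in\partial\Psi(0)$, and the right-hand side is bounded above in $x$. The same bound shows the supremum in~\cref{eq:convex_conjugate} is attained at a unique point $x^\star(y)$, because the supremand is $\alpha$-strongly concave. So $\Psi^*:\R^n\to\R$ is convex and finite, and $\varphi=\Psi^*-\tfrac12\|\cdot\|^2$ is real-valued.

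Next I show $\Psi^*$ is differentiable with $\nabla\Psi^*(y)=x^\star(y)$. By Fenchel--Young and Fenchel--Moreau, $x\in\partial\Psi^*(y)$ if and only if $y\in\partial\Psi(x)$, which happens if and only if $x$ maximizes $\langle\cdot,y\rangle-\Psi$. Hence $\partial\Psi^*(y)=\{x^\star(y)\}$ is a singleton. A finite convex function with singleton subdifferential everywhere is differentiable, so $\nabla\Psi^*=(\partial\Psi)^{-1}$, matching the single-valued version of~\cref{eq:conjugate_gradient_identity} without needing differentiability of $\Psi$.

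The key estimate is Lipschitz continuity of $\nabla\Psi^*$ with constant $1/\alpha$. Take $y_i\in\partial\Psi(x_i)$ for $i=1,2$. Strong convexity gives strong monotonicity of the subdifferential: adding the two strong-convexity subgradient inequalities yields $\langle y_1-y_2,x_1-x_2\rangle\ge\alpha\|x_1-x_2\|^2$. By Cauchy--Schwarz this gives $\|x_1-x_2\|\le\tfrac1\alpha\|y_1-y_2\|$, that is, $\|\nabla\Psi^*(y_1)-\nabla\Psi^*(y_2)\|\le\tfrac1\alpha\|y_1-y_2\|$. Then $\nabla\varphi=\nabla\Psi^*-I$, and the triangle inequality gives the Lipschitz constant $1/\alpha+1$, as claimed.

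I expect the main obstacle to be justifying differentiability of $\Psi^*$ when $\Psi$ itself is nonsmooth. This is exactly the setting of ReLU-type ICNNs, where~\cref{eq:conjugate_gradient_identity} as stated does not apply. I handle it through the singleton-subdifferential argument above, citing \cite[Theorem 25.1]{rockafellar1997convex} for the fact that uniqueness of the subgradient implies differentiability. The remaining steps are routine.
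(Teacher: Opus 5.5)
Your proof is correct and follows the paper's route. The paper gets $1/\alpha$-smoothness of $\Psi^*$ by citing the strong convexity/smoothness duality theorem of Kakade, Shalev-Shwartz, and Tewari, then concludes via $\nabla\varphi=\nabla\Psi^*-I$ and the triangle inequality, while you prove that duality directly (finiteness and attainment, singleton subdifferential implies differentiability, strong monotonicity of $\partial\Psi$), which correctly covers nonsmooth $\Psi$ as well.
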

\begin{proof}
By~\cite[Theorem 3]{kakade2012regularization}, $\Psi^*$ is $1/\alpha$-smooth, so $\varphi$ is $(1+1/\alpha)$-smooth by~\cref{eq:bar_phi}.
\end{proof}

We now turn to the Kurdyka--\L ojasiewicz (KL) property, the machinery needed to upgrade vanishing gradients and step sizes into convergence of the iterates themselves. For the objective functions $F$ of interest, we shall see that it is straightforward to show that $\|\nabla F(x_k)\|$ and $\|x_k-x_{k+1}\|$ go to zero along the sequence $\{x_k\}_{k\geq 1}$ generated by some PnP algorithm. However, neither condition implies iterate convergence to a critical point of $F$. The objective may become very flat around a critical point, so that the gradient vanishes while the iterates travel an infinite total distance without converging. For a simple example, take $x_0 = 0$ and $x_{k} = x_{k-1} + \frac{1}{k}$ for $k \geq 1$. Then $\|x_k - x_{k+1}\| = \frac{1}{k+1} \to 0$, and $\|\nabla F(x_k)\|\to 0$ for many choices of $F$, but the iterates clearly diverge. Further, $\sum_k \|x_k - x_{k+1}\|^2 < \infty$, so square summability is insufficient. 

The KL property rules out precisely this kind of degenerate behavior. Informally, it says the objective gap can be monotonically reparametrized so that its gradient remains bounded away from zero, except \emph{at} a critical point. Thus, while $\|\nabla F\|$ may be small near a critical point, it cannot vanish arbitrarily. To make this precise, define the reparametrized objective gap to be $\Phi(x) \coloneqq \varphi(|F(x)-F(x^*)|)$ for an appropriate $\varphi$ and a critical point $x^*$. If $F$ is differentiable and $\varphi'>0$, then the gradient magnitude of the reparametrized objective gap becomes\begin{equation}\label{eq:KL_reparam}
    \|\nabla \Phi(x)\| = \|\varphi'(|F(x)-F(x^*)|)\nabla F(x)\| = \varphi'(|F(x)-F(x^*)|)\|\nabla F(x)\| .
\end{equation} The KL property requires this quantity to be bounded away from zero whenever $F(x) > F(x^*)$. We restrict our attention to objectives that are at least $C^1$, though the property can be defined for a wider class of functions.

\begin{definition}[The KL Property]\label{def:KL}
Let $F:\R^n\to\R$ be continuously differentiable, and let $x^*\in\R^n$.
We say that $F$ has the KL property at $x^*$ if
there exist $\eta\in(0,+\infty]$, a neighborhood $U$ of $x^*$, and a continuous concave\footnote{If $\varphi$ is concave, then $\varphi(s) - \varphi(t) \geq \varphi'(s)(s-t)$ holds for all $0 \leq t \leq s$. This allows decreases in the objective value to be easily related to the lengths of successive steps.} function $\varphi:[0,\eta)\to\R_+$ such that:
\begin{enumerate}[label=\normalfont(\roman*)]
    \item \label{def:KL_i} $\varphi(0)=0$;
    \item \label{def:KL_ii} $\varphi\in C^1((0,\eta))$;
    \item \label{def:KL_iii} $\varphi'(s)>0$ for every $s\in(0,\eta)$;
    \item \label{def:KL_iv} for every $x\in U$ satisfying $F(x^*)<F(x)<F(x^*)+\eta$, the KL inequality holds: \begin{equation}\tag{KL}\label{eq:kl_inequality}
        \varphi'\bigl(F(x)-F(x^*)\bigr)\|\nabla F(x)\|
        \geq 1.
    \end{equation}
\end{enumerate}
The function $\varphi$ is called a \emph{desingularizing function}. A continuously differentiable function satisfying the KL property at every point of $\R^n$ is called \emph{KL}, or a \emph{KL function}.
\end{definition}

One common route to establishing the KL property is to show that the function's graph belongs to a sufficiently ``tame'' geometric class. Informally, these classes exclude sufficiently pathological behavior so that a suitable reparameterization $\varphi$ exists. These tame classes contain the functions used in practice, including those built from polynomials, exponentials, logarithms, and piecewise algebraic functions through standard operations like sums, products and compositions. We formalize one such notion of geometric tameness known as o-minimal structures in~\cref{section:KL_objective}. The KL property can also be established directly, without appealing to geometric tameness. In particular, every strongly convex $C^1$ function is KL. Indeed, a $\mu$-strongly convex $C^1$ function $F$ satisfies\begin{equation}
    F(x) - F(x^*) \leq \langle \nabla F(x),x-x^*\rangle - \frac{\mu}{2}\|x-x^*\|^2 \leq \frac{1}{2\mu}\|\nabla F(x)\|^2,
\end{equation}
for any $x,x^*\in\R^n$. Therefore, $\|\nabla F(x)\| \geq \sqrt{2\mu\bigl(F(x)-F(x^*)\bigr)}$ whenever $F(x)>F(x^*)$,  which is exactly \cref{eq:kl_inequality} for the choice of $\varphi(s) = \sqrt{\tfrac{2s}{\mu}}$.

\section{Learned Proximal Networks (LPNs)}\label{section:LPNs}
In this section, we outline the procedure for constructing strongly convex potentials that satisfy the Lipschitz condition of~\cref{lem:coercive_regularizer} using Input Convex Neural Networks (ICNNs)~\cite{amos2017input}.

\begin{definition}[Input Convex Neural Network (ICNN)]
\label{def:ICNN}
Let $\psi_\theta:\R^n\to \R$ be a $K$-layered neural network given by $\psi_\theta(x) \coloneqq z_K$, with hidden states 
\begin{align}
    z_{i+1}\coloneqq \sigma(W_iz_i + H_i x + b_i), \; i =0,\dots,K-1, \quad W_0,z_0 \equiv0.
\end{align}
$H_i$ and $W_i$ are linear operations with $W_i$ having non-negative entries. $b_i$ are biases, and $\sigma:\R \to \R$ is a convex and non-decreasing activation, applied element-wise. We call $\psi_\theta$ an ICNN with parameters $\theta = \{W_i,H_i,b_i\}_{i=0}^{K-1}$.
\end{definition}

The architecture of an ICNN preserves convexity layer by layer. At the $i$th layer, suppose each component $x\mapsto[z_i(x)]_j$ is convex. Since $W_i$ has nonnegative entries, each $[W_iz_i(x)]_k$ is a nonnegative linear combination of convex functions, and is therefore convex. Adding the affine term $H_ix + b_i$ preserves convexity. Finally, because the activation $\sigma$ is convex and nondecreasing, applying $\sigma$ coordinatewise preserves convexity, and we can conclude that $\psi_\theta$ is convex with respect to the input. We now define the class of \emph{Learned Proximal Networks} (LPNs) considered throughout this work. 
\begin{definition}[Learned Proximal Network]\label{def:LPN}
$\lpn: \R^n \to \R^n$ is called an LPN with \emph{strong convexity modulus} $\alpha$ if it can be written as 
\begin{align}
    \lpn(y)  \in \partial \psi_{\theta}(y) + \alpha y,
\end{align}
for some $0<\alpha<1$, where $\psi_\theta$ is ICNN as in~\Cref{def:ICNN}, with a Lipschitz activation function $\sigma$. We call $\potential \coloneqq \psi_{\theta} + \tfrac{\alpha}{2}\|\cdot\|^2$ the LPN's \emph{potential function} and $\reg \coloneqq$ $\potential^* - \tfrac{1}{2}\|\cdot\|^2$ its \emph{implicit regularizer}.
\end{definition}

\begin{remark}[Regularity of the implicit LPN regularizer]\label{remark:regularity_of_phi}
By~\cref{lem:coercive_regularizer}, any LPN $f_{\theta,\alpha}$ satisfies
$f_{\theta,\alpha}(x)\in\prox_{\reg}(x)$ globally. Moreover, its potential $\potential$ satisfies the growth condition of~\cref{lem:coercive_regularizer}, and hence the associated regularizer $\reg$ is coercive. Indeed, if $\sigma$ is $L_\sigma$-Lipschitz, $\sigma$ acting coordinatewise is also $L_\sigma$-Lipschitz. Suppose the hidden state $z_i$ is $L_i$-Lipschitz, then it holds that \begin{equation}
    \|z_{i+1}(x)-z_{i+1}(y)\| \leq L_\sigma (\|W_i\| L_i + \|H_i\|)\|x-y\|,
\end{equation}
for all $x,y\in \R^n$, so that $\psi_{\theta}$ is globally $L$-Lipschitz for some $L$ by induction. Therefore, $\psi_\theta(x) \leq   L\|x\| + \psi_\theta(0)$, meaning $\potential(x) \leq \tfrac{\alpha}{2}\|x\|^2 + L\|x\| + \psi_\theta(0)$ for all $x$. Next, $\potential$ is clearly $\alpha$-strongly convex, so $\reg$ is $C^1$ by~\cref{lem:C_1_regularizer}. 
\end{remark}

In terms of implementing LPNs, when the activation is differentiable, the subdifferential $\partial \psi_\theta(y)$ reduces to the singleton $\{\nabla \psi_\theta(y)\}$, and evaluation of the LPN amounts to differentiation. Even when the activations are not differentiable everywhere, evaluation of an LPN is straightforward. For a given input $y$, if any preactivation lies in a region where the activation is non-differentiable, one simply chooses \emph{any} subgradient and propagates this choice through the network to obtain an element of $\partial \psi_\theta(y)$. The entire subdifferential need not be computed. For example, the ReLU activation $\mathrm{ReLU}(t)=\max\{0,t\}$, is not differentiable at $t=0$, but can be an activation in an LPN. At a zero preactivation, one may select any value in the subdifferential $\partial \mathrm{ReLU}(0) = [0,1]$, such as $0$. This procedure is analogous to the standard backward-pass convention used for training neural networks with ReLU activations. In particular, PyTorch\cite{paszke2019pytorch} selects the minimum-norm element of the subdifferential when the derivative of a convex activation does not exist.\footnote{See \url{https://docs.pytorch.org/docs/2.13/notes/autograd}.}

In the following section, we make use of LPNs to construct convergent PnP algorithms. In particular, these algorithms minimize a regularized objective of the form
\begin{equation}\label{eq:lpn_regularized_objective}
    F(x) \coloneqq \eta \fid(x) + \reg(x),
\end{equation} where $\reg$ is an LPN's implicit regularizer, and $\fid$ is an $L_{\mathbf{fid}}$-smooth data-fidelity term, with $0 < \eta < 1/L_{\mathbf{fid}}$. The proximal structure of the LPN yields the standard descent and first-order convergence guarantees of proximal methods. Under the additional assumption that $F$ is KL, these results can be made stronger. 
    In practice, the KL requirement is mild. Whenever the LPN activation function and the data-fidelity term $\fid$ in question are constructed from polynomials, exponentials, logarithms, absolute values, maxima and minima, or restricted analytic functions through finitely many sums, products, and compositions, the resulting regularized objective $F \coloneqq \eta \fid + \reg$ is KL. Examples of such activations are ReLU, Huberized ReLU (see \cref{eq:huberized_relu} for the definition), or Softplus $S_\beta(x) \coloneqq \tfrac{1}{\beta}\log(1+\exp(\beta x))$. We make this precise in the following result, which we prove in the supplementary materials,~\cref{section:KL_objective}.
\begin{proposition}[KL property of LPN objectives]\label{prop:lpn_objective_KL}
    Let $\lpn$ be an LPN as in~\cref{def:LPN}, with potential $\potential$ 
    and implicit regularizer $\reg$. Let $\fid:\R^n\to\R$ be continuously differentiable. Suppose that the activation function $\sigma$ used to construct $\lpn$ and the data-fidelity term $\fid$ are constructed from polynomials, exponentials, logarithms, absolute values, maxima and minima, or restricted analytic functions through finitely many sums, products, and compositions. Then, for every $\eta>0$ and $\gamma>0$, the objective $F(x)\coloneqq \eta \fid(x)+\gamma\reg(x)$ is KL. Moreover, the augmented objective $\xi:\R^{n+1}\to\R$ given by $\xi(x,t)\coloneqq F(x)+\frac{1}{2}t^2$, and the averaged objective, given by $F^{(\lambda)} \coloneqq \eta \fid + \reg^{(\lambda)}$, for $\reg^{(\lambda)} \coloneqq \lambda M_{1-\lambda} \reg$ are KL.
\end{proposition}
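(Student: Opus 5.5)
Throughout, I work in the o-minimal structure $\mathbb{R}_{\mathrm{an,exp}}$, the expansion of the real field by restricted analytic functions and the exponential (Wilkie, van den Dries--Macintyre--Marker). I then invoke the nonsmooth KL theorem of Bolte, Daniilidis, Lewis and Shiota: every $C^1$ function definable in an o-minimal structure is KL at every point of its domain. With this, the proof reduces to two things. First, each of $F$, $\xi$ and $F^{(\lambda)}$ must be continuously differentiable. Second, each must be definable in $\mathbb{R}_{\mathrm{an,exp}}$.

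Differentiability is already available. By \cref{remark:regularity_of_phi}, $\reg$ is $C^1$ (indeed smooth, via \cref{lem:C_1_regularizer}). Since $\fid$ is $C^1$ by hypothesis, $F$ is $C^1$, and so is $\xi$. For $F^{(\lambda)}$, I would show that $\reg^{(\lambda)}=\lambda M_{1-\lambda}\reg$ is $C^1$. The cleanest route is to write
\[
M_{1-\lambda}\reg(y)=\inf_x\{\potential^*(x)-\tfrac12\|x\|^2+\tfrac{1}{2(1-\lambda)}\|x-y\|^2\}.
\]
The quadratic part has coefficient $\tfrac{1}{2(1-\lambda)}-\tfrac12>0$, so the infimand is strongly convex in $x$ because $\potential^*$ is convex. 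The infimum of a jointly convex function after removing $\tfrac{1}{2(1-\lambda)}\|y\|^2$ is convex in $y$. The minimizer is unique and continuous, and a Danskin/Moreau-envelope argument gives $\nabla M_{1-\lambda}\reg(y)=(y-\prox(y))/(1-\lambda)$, which is continuous.

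Definability of $\fid$ and $\sigma$ holds because each is built from functions definable in $\mathbb{R}_{\mathrm{an,exp}}$: polynomials, $\exp$, $\log$ (the inverse of $\exp$), $|\cdot|$, $\max$, $\min$, and restricted analytic functions. Definable functions are closed under finite sums, products and compositions, since their graphs are first-order definable from definable graphs. Consequently $\psi_\theta$ is definable: it is a finite composition of affine maps and coordinatewise $\sigma$. Hence $\potential$ is definable.

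The conjugate is $\potential^*(y)=\sup_x\{\langle x,y\rangle-\potential(x)\}$. Its graph is $\{(y,t): \forall x\ t\ge\langle x,y\rangle-\potential(x),\ \forall s<t\ \exists x\ s<\langle x,y\rangle-\potential(x)\}$, which is first-order definable; the supremum is attained and finite by strong convexity. Therefore $\reg=\potential^*-\tfrac12\|\cdot\|^2$ is definable, and so is $F=\eta\fid+\gamma\reg$. The function $\xi$ is definable as a sum with $t^2/2$. The Moreau envelope $M_{1-\lambda}\reg$ is an infimum over $x$ of a definable function and is definable by the same quantifier argument, so $F^{(\lambda)}$ is definable.

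The main obstacle is handling ``restricted analytic functions'' and the requirement that $\sigma$ be built by composition correctly. One must ensure the ambient structure simultaneously contains $\exp$ and the restricted analytic functions, which is precisely $\mathbb{R}_{\mathrm{an,exp}}$. One must also make sure that quantification over all of $\R^n$ in the conjugate and the envelope is legitimate. It is, because o-minimal structures are closed under projections; no compactness is needed. I would spell out these closure facts in \cref{section:KL_objective}, together with the precise statement of the Bolte--Daniilidis--Lewis--Shiota theorem for $C^1$ definable functions.
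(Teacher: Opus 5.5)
Your proposal is correct and follows the paper's proof closely. Both arguments work in $\Ranexp$ and obtain definability of $\potential^*$, hence of $\reg$, $F$ and $\xi$, from a first-order description of its graph; the paper writes this as $\pi(A)\cap\pi(B)^c$ in \cref{lem:definability}. Both take $C^1$ from \cref{lem:C_1_regularizer}, and both conclude from the KL property of definable $C^1$ functions. The paper proves that last fact itself in \cref{lem:definable_KL}, from Kurdyka's inequality plus the monotonicity theorem, which it uses to make the desingularizer concave as \cref{def:KL} requires; check that the version of the Bolte--Daniilidis--Lewis--Shiota theorem you cite also delivers concavity.

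The one real difference is $F^{(\lambda)}$. The paper does no Moreau-envelope analysis. By \cref{prop:closure_of_lpns} and \cref{cor:averaged_lpn}, $\lambda M_{1-\lambda}\reg$ is itself the implicit regularizer of the LPN $\lpn^{(\lambda)}$, so definability and $C^1$ follow from the same two lemmas already used for $\reg$. Your direct route also works, but one claim in it is backwards. Set $g\coloneqq\potential^*+\tfrac{\lambda}{2(1-\lambda)}\|\cdot\|^2$. After removing $\tfrac{1}{2(1-\lambda)}\|y\|^2$, the infimand is $g(x)-\tfrac{1}{1-\lambda}\langle x,y\rangle$. It is not jointly convex because of the bilinear term, and its infimum over $x$ is $-g^*\bigl(y/(1-\lambda)\bigr)$, which is concave in $y$, not convex. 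The correct identity is $M_{1-\lambda}\reg(y)=\tfrac{1}{2(1-\lambda)}\|y\|^2-g^*\bigl(y/(1-\lambda)\bigr)$. Since $g$ is $\tfrac{\lambda}{1-\lambda}$-strongly convex, the conjugate-smoothness fact behind \cref{lem:C_1_regularizer} gives $C^{1,1}$ immediately, with no Danskin argument needed.
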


\section{Convergence Analysis of LPN Algorithms}\label{section:convergence_analysis}

We first present the machinery that we shall use to show convergence of LPN-based PnP methods. We focus on the PnP variant of proximal gradient descent~\cref{eq:PnP-PGD}, though similar results hold for the alternating direction method of multipliers (ADMM) and the proximal point method (PPM), which we present in~\cref{section:other_methods}. In each application, given an LPN $\lpn$, we consider the continuously differentiable objective $F$ in~\cref{eq:lpn_regularized_objective}. We show that the corresponding algorithm produces iterates $\{x_k\}_{k \geq 1}$ satisfying the following \textbf{standard conditions}:
\begin{hypotheses}
\item \label[hypothesis]{sufficient_decrease}
(Sufficient Decrease): For some $a>0$ and all $k\geq 1$,
$F(x_{k+1}) + a\|x_k-x_{k+1}\|^2 \leq F(x_k)$.
\item \label[hypothesis]{relative_error}
(Relative Error): For some $b>0$ and all $k\geq 1$,
$\|\nabla F(x_{k+1})\| \leq b\|x_k-x_{k+1}\|$.
\item \label[hypothesis]{precompactness}
(Objective-Precompactness): There exists a subsequence
$\{x_{k_j}\}_{j\geq1}$, a point $x^*\in\R^n$, and some $F^*\in\R$
such that $x_{k_j}\to x^*$ and $F(x_{k_j})\to F^*$.
\end{hypotheses}
We next show that these standard conditions imply monotone objective descent and square-summability of the steps.
 \begin{proposition}\label{prop:generic_convergence}
     Suppose $F:\R^n\to {\R}$ and the sequence $\{x_k\}_{k \geq 0}$ satisfy \cref{sufficient_decrease}, \cref{relative_error}, and \cref{precompactness}. Then the following hold:
     \begin{enumerate}[label=\normalfont(\roman*)]
    \item \label{prop:generic_convergence_i} $F(x_k)$ is non-increasing and convergent.
    \item \label{prop:generic_convergence_ii} $\sum_{k=1}^{+\infty} \|x_k-x_{k+1}\|^2 < +\infty$.
    \end{enumerate}
 \end{proposition}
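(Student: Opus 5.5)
The plan is to use only \cref{sufficient_decrease} and \cref{precompactness}; \cref{relative_error} is not needed for either conclusion and will matter only later, in the KL upgrade to iterate convergence. For \ref{prop:generic_convergence_i}, \cref{sufficient_decrease} gives $F(x_{k+1}) \leq F(x_k) - a\|x_k-x_{k+1}\|^2 \leq F(x_k)$ for every $k\geq 1$, since $a>0$. Hence $\{F(x_k)\}_{k\geq1}$ is non-increasing.

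A monotone non-increasing real sequence converges either to a finite limit or to $-\infty$. To rule out $-\infty$, I will use \cref{precompactness}: there is a subsequence with $F(x_{k_j})\to F^*\in\R$. A non-increasing sequence with a subsequence tending to a finite value must converge to that same value. Indeed, for any $k$ choose $k_j\geq k$; then $F(x_k)\geq F(x_{k_j})\geq \inf_j F(x_{k_j}) = \lim_j F(x_{k_j}) = F^*$. So the sequence is bounded below by $F^*$ and converges to $F^*$. This bounded-below step is the only nontrivial point. Note that it does not rely on $F$ being bounded below globally, only along the trajectory, which is exactly what \cref{precompactness} supplies.

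For \ref{prop:generic_convergence_ii}, I will rearrange \cref{sufficient_decrease} into $\|x_k-x_{k+1}\|^2 \leq \tfrac{1}{a}\bigl(F(x_k)-F(x_{k+1})\bigr)$ and sum over $k=1,\dots,N$. The right-hand side telescopes:
\begin{equation*}
\sum_{k=1}^{N}\|x_k-x_{k+1}\|^2 \leq \frac{1}{a}\bigl(F(x_1)-F(x_{N+1})\bigr) \leq \frac{1}{a}\bigl(F(x_1)-F^*\bigr),
\end{equation*}
where the last inequality uses $F(x_{N+1})\geq F^*$ from part \ref{prop:generic_convergence_i}. The bound is uniform in $N$, so letting $N\to\infty$ gives $\sum_{k\geq1}\|x_k-x_{k+1}\|^2<\infty$.

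I expect no real obstacle here. The only care needed is to record explicitly that the limit $F^*$ is finite, which comes from \cref{precompactness}, because the telescoping bound depends on it.
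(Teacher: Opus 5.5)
Your proposal is correct and follows essentially the same route as the paper: monotonicity from \cref{sufficient_decrease}, a finite limit $F^*$ from the subsequence in \cref{precompactness}, and the telescoping bound $\sum_{k=1}^N\|x_k-x_{k+1}\|^2\leq (F(x_1)-F^*)/a$. You spell out explicitly why a non-increasing sequence with a subsequence converging to a finite $F^*$ must itself converge to $F^*$, a step the paper leaves implicit.
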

 \begin{proof}
 \cref{sufficient_decrease} and ~\cref{precompactness} imply the existence of some $F^* \in \R$ where $F(x_{k_j}) \downarrow F^*$, from which~\ref{prop:generic_convergence_i} follows.  Summing~\cref{sufficient_decrease} from $k=1$ to $K$, 
\begin{equation}
\sum_{k=1}^K\|x_k - x_{k+1}\|^2\leq \frac{F(x_1)-F(x_{K+1})}{a} \leq \frac{F(x_1)-F^*}{a},
\end{equation}
and~\ref{prop:generic_convergence_ii} follows when we let $K \to \infty$. 
 \end{proof}

The additional assumption that the objective is KL upgrades these guarantees to iterate convergence and the finite-length property, which we present next.  
\begin{proposition}(\cite[Theorem 2.9]{attouch2013convergence})\label{prop:generic_convergence_kl}
Suppose $F:\R^n\to {\R}$ and the sequence $\{x_k\}_{k \geq 0}$ satisfy~\cref{sufficient_decrease},~\cref{relative_error}, and~\cref{precompactness}. If $F$ is a KL function, then the following hold: 
\begin{enumerate}[
    label=\normalfont(\roman*),
    ref=\thetheorem(\roman*)]
\crefalias{enumi}{proposition}
\item $\{x_k\}_{k\geq 1}$ converges.
\item $\sum_{k=1}^{+\infty} \|x_k-x_{k+1}\| < +\infty$.
\end{enumerate}
\end{proposition}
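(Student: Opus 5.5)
We follow the standard Attouch--Bolte--Svaiter argument: combine the sufficient decrease condition \cref{sufficient_decrease}, the relative error condition \cref{relative_error}, and the KL inequality at a cluster point. The result is a summable bound on the step lengths, and the finite-length property then gives convergence of the iterates.

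\emph{Step 1: the limit point.} Let $x^*$ and $F^*$ be as in \cref{precompactness}. By \cref{prop:generic_convergence}, $F(x_k)$ is non-increasing and $F(x_k)\downarrow F^*$. Since $F$ is $C^1$, hence continuous, $F^*=F(x^*)$. Also, \cref{prop:generic_convergence} gives $\|x_k-x_{k+1}\|\to 0$, so \cref{relative_error} yields $\nabla F(x^*)=\lim_j \nabla F(x_{k_j})=0$. If $F(x_K)=F^*$ for some $K$, then \cref{sufficient_decrease} forces $x_{k+1}=x_k$ for all $k\ge K$, and we are done. Otherwise $F(x_k)>F^*$ for every $k$.

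\emph{Step 2: the key inequality.} Let $\varphi$, $U\supseteq B(x^*,\rho)$ and $\eta$ be the KL data at $x^*$ from \cref{def:KL}. Write $r_k\coloneqq F(x_k)-F^*$ and $\Delta_{k}\coloneqq\varphi(r_k)-\varphi(r_{k+1})$. Suppose $x_k\in B(x^*,\rho)$ and $r_k<\eta$. Concavity of $\varphi$ gives $\Delta_k\ge \varphi'(r_k)(r_k-r_{k+1})$. The KL inequality and \cref{relative_error} (applied at index $k$, so the bound involves $\|x_{k-1}-x_k\|$) give $\varphi'(r_k)\ge 1/(b\|x_{k-1}-x_k\|)$. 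Combining these with \cref{sufficient_decrease},
\begin{equation}
\|x_k-x_{k+1}\|^2\le \frac{b}{a}\,\Delta_k\,\|x_{k-1}-x_k\|.
\end{equation}
By AM--GM, $2\|x_k-x_{k+1}\|\le \|x_{k-1}-x_k\|+\tfrac{b}{a}\Delta_k$. Summing this from $k=K$ to $N$ telescopes the $\varphi$ terms and gives
\begin{equation}
\sum_{k=K}^N\|x_k-x_{k+1}\|\le \|x_{K-1}-x_K\|+\frac{b}{a}\varphi(r_K).
\end{equation}

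\emph{Step 3: localization, the main obstacle.} The bound in Step 2 is only valid while the iterates stay in $B(x^*,\rho)$, so we need an induction showing they never leave. Choose $K=k_j$ large enough that three quantities are small: $\|x_K-x^*\|$, $\|x_{K-1}-x_K\|$ (which tends to $0$ by Step 1), and $\varphi(r_K)$ (using continuity of $\varphi$ at $0$ with $\varphi(0)=0$, and $r_K\to0$). Make their sum, $\|x_K-x^*\|+\|x_{K-1}-x_K\|+\tfrac{b}{a}\varphi(r_K)$, less than $\rho$, and also ensure $r_K<\eta$. Since $r_k$ is non-increasing, $r_k<\eta$ for all $k\ge K$. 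Now argue by induction on $N$: if $x_K,\dots,x_N\in B(x^*,\rho)$, then the partial sum bound together with the triangle inequality gives $\|x_{N+1}-x^*\|<\rho$. Hence every iterate from $K$ on lies in $B(x^*,\rho)$.

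\emph{Step 4: conclusion.} Letting $N\to\infty$ in Step 2 gives $\sum_k\|x_k-x_{k+1}\|<\infty$, which is (ii). Finite length means $\{x_k\}$ is Cauchy, so it converges; its limit must be the cluster point $x^*$, which proves (i). The delicate point in Step 3 is an index shift: \cref{relative_error} controls $\nabla F(x_k)$ by the previous step $\|x_{k-1}-x_k\|$. This is why that previous step has to be included in the choice of $K$.
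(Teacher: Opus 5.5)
Your proof is correct and is essentially the Attouch--Bolte--Svaiter argument: localize in a ball $B(x^*,\rho)$ inside the KL neighborhood, derive $2\|x_k-x_{k+1}\|\le\|x_{k-1}-x_k\|+\tfrac{b}{a}\Delta_k$, telescope, and close the induction; the paper does not reprove this statement and simply cites Theorem 2.9 of that work. The only contrast with the paper's own use of this machinery, in the supplementary ADMM proof, is that there the KL inequality is made uniform over the compact set of cluster points by a finite cover, whereas your localization at a single cluster point is all that is needed here.
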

\begin{remark}[On convergence rates]\label{remark:on_convergence_rates}
$\mathcal{O}(K^{-1/2})$ and $\mathcal{O}(K^{-1})$ rates follow immediately from square summability and the finite-length property, respectively. Indeed, if the sequence $a_k = \|x_{k+1}-x_k\|$ satisfies $\sum_{k=1}^{\infty} a_k^2= C_1$ for some $C_1 < \infty$, then
\begin{equation}
    \min_{1\leq k\leq K} a_k^2
    \leq
    \frac{1}{K}\sum_{k=1}^K a_k^2
    \leq
    \frac{1}{K}\sum_{k=1}^{\infty} a_k^2= \frac{C_1}{K},
\end{equation}
and therefore $\min_{1\leq k\leq K} \|x_k - x_{k+1}\|
    =
    \mathcal{O}(K^{-1/2})$ after taking a square-root. 
Similarly, if a nonnegative sequence $\{a_k\}_{k\geq 1}$ satisfies $\sum_{k=1}^{\infty} a_k = C_2$ for some $C_2 < \infty$, then
\begin{equation}
    \min_{1\leq k\leq K} a_k
    \leq
    \frac{1}{K}\sum_{k=1}^K a_k
    \leq
    \frac{1}{K}\sum_{k=1}^{\infty} a_k
    =
    \frac{C_2}{K},
\end{equation}
and therefore $
    \min_{1\leq k\leq K} \|x_k - x_{k+1}\|
    =
    \mathcal{O}(K^{-1})$. Moreover, the same rates hold for the gradient of the objective. The optimality condition for $x_{k+1}=\prox_\varphi(x_k-\eta\nabla f(x_k))$ reads $\nabla\varphi(x_{k+1}) + x_{k+1}-x_k+\eta\nabla f(x_k)=0$, and therefore if $f$ is $L_f$-smooth,
    \begin{equation}
        \|\nabla F(x_{k+1})\|
        = \big\|\eta\big(\nabla f(x_{k+1})-\nabla f(x_k)\big) - (x_{k+1}-x_k)\big\|
        \leq (1+\eta L_f)\,\|x_{k+1}-x_k\|.
    \end{equation}
Consequently, $\min_{1\le k\le K}\|\nabla F(x_{k+1})\|$ inherits the $\mathcal{O}(K^{-1/2})$ and $\mathcal{O}(K^{-1})$ rates above. Since these implications are standard, we state results in the remainder of this work only in terms of square summability or the finite-length property, with the corresponding rates implied.
\end{remark}
\subsection{Standard LPN-Proximal Gradient Descent}

We next consider standard PnP-PGD using an LPN. It is standard to assume that the data-fidelity term $\fid$ is $L_{\mathbf{fid}}$-smooth, and that the step size $\eta$ satisfies $\eta < 1/L_{\mathbf{fid}}$. This is a very mild assumption, because in most inverse problems, the Lipschitz constant of $\nabla \fid$ can be computed accurately and used to select an appropriate step size. For example, in the case of the least-squares data-fidelity term $\fid(x)\coloneqq \tfrac{1}{2}\|Ax-y\|^2$, $L_{\mathbf{fid}} = \|A\|^2 = \lambda_{\mathrm{max}}(A^TA),$ which can be reliably calculated via power-iterations. The resulting algorithm is given in~\cref{alg:lpn_pgd_vanilla}.

\begin{algorithm}[H]
\caption{LPN-PGD}
\label{alg:lpn_pgd_vanilla}
\begin{algorithmic}[1]
\REQUIRE Initial point $x_0\in \R^n$, $L_{\mathbf{fid}}$-smooth and bounded below $\fid:\R^n \to \R$,
         step size $0<\eta<1/L_{\mathbf{fid}}$, LPN $\lpn$ as in~\cref{def:LPN},  number of steps $K$.
\FOR{$k=0,1,\dots,K-1$}
    \STATE
    $x_{k+1} \coloneqq \lpn\!\left(x_k-\eta\nabla\fid(x_k)\right)$
\ENDFOR
\RETURN $x_K$
\end{algorithmic}
\end{algorithm}

\begin{proposition}[Convergence of LPN-PGD]\label{prop:pgd_convergence}
Consider $\{x_k\}_{k \geq 0}$, the iterates of standard LPN-PGD in~\cref{alg:lpn_pgd_vanilla}, where $x_0$ is the initial point, $\reg$ denotes the regularizer associated with $\lpn$, and $F\coloneqq \eta \fid + \reg$. Then the following hold:
\begin{enumerate}[label=\normalfont(\roman*)]
\crefalias{enumi}{proposition}
    \item \label{prop:pgd_convergence_i} $F(x_k)$ is non-increasing and convergent.
    \item \label{prop:pgd_convergence_ii} $\sum_{k=1}^{+\infty} \|x_k-x_{k+1}\|^2 < +\infty$.
    \item \label{prop:pgd_convergence_iii} If $F$ is KL, then $\sum_{k=1}^{+\infty}\|x_k-x_{k+1}\| < +\infty,$ and $x_k \to x^*$, with $x^*$ a critical point of $F$.
    \end{enumerate}
\end{proposition}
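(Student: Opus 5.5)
The plan is to verify the three standard conditions \cref{sufficient_decrease}, \cref{relative_error}, and \cref{precompactness} for the iterates of \cref{alg:lpn_pgd_vanilla} with $F=\eta\fid+\reg$. Then \cref{prop:generic_convergence} gives \ref{prop:pgd_convergence_i} and \ref{prop:pgd_convergence_ii}, and \cref{prop:generic_convergence_kl} gives \ref{prop:pgd_convergence_iii}. By \cref{remark:regularity_of_phi}, $\lpn(x)\in\prox_{\reg}(x)$ for every $x$, $\reg$ is $C^1$ (indeed $(1+1/\alpha)$-smooth by \cref{lem:C_1_regularizer}), and $\reg$ is coercive. So the iteration is exactly (PGD) with step $1$ on $\reg$ and step $\eta$ on $\fid$, with no dependence on training.

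\emph{Sufficient decrease.} Set $u_k\coloneqq x_k-\eta\nabla\fid(x_k)$. Since $x_{k+1}$ minimizes $z\mapsto\reg(z)+\tfrac12\|z-u_k\|^2$, compare its value with that at $z=x_k$:
\begin{equation}
\reg(x_{k+1})+\tfrac12\|x_{k+1}-x_k\|^2+\eta\langle\nabla\fid(x_k),x_{k+1}-x_k\rangle\le\reg(x_k).
\end{equation}
The descent lemma for the $L_{\mathbf{fid}}$-smooth $\fid$ gives $\eta\fid(x_{k+1})\le\eta\fid(x_k)+\eta\langle\nabla\fid(x_k),x_{k+1}-x_k\rangle+\tfrac{\eta L_{\mathbf{fid}}}{2}\|x_{k+1}-x_k\|^2$. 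Adding the two yields \cref{eq:generic_decrease}, i.e.\ \cref{sufficient_decrease} with $a=(1-\eta L_{\mathbf{fid}})/2>0$, using $\eta<1/L_{\mathbf{fid}}$.

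\emph{Relative error.} Since $\reg$ is differentiable, first-order optimality of the prox gives $\nabla\reg(x_{k+1})=u_k-x_{k+1}$. Hence $\nabla F(x_{k+1})=\eta(\nabla\fid(x_{k+1})-\nabla\fid(x_k))+(x_k-x_{k+1})$, and $\|\nabla F(x_{k+1})\|\le(1+\eta L_{\mathbf{fid}})\|x_k-x_{k+1}\|$, as in \cref{remark:on_convergence_rates}. This is \cref{relative_error} with $b=1+\eta L_{\mathbf{fid}}$.

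\emph{Precompactness.} This is the step needing the most care. By sufficient decrease, every iterate lies in the sublevel set $\{F\le F(x_1)\}$. Because $\fid$ is bounded below and $\reg$ is coercive by \cref{lem:coercive_regularizer} and \cref{remark:regularity_of_phi}, $F$ is coercive, so this sublevel set is bounded. Bolzano--Weierstrass then gives a convergent subsequence $x_{k_j}\to x^*$, and continuity of $F$ (both terms are $C^1$) gives $F(x_{k_j})\to F(x^*)\eqqcolon F^*\in\R$. It remains to check that the quadratic lower bound on $\reg$ from \cref{lem:coercive_regularizer}, together with $\fid\ge\inf\fid>-\infty$, really gives coercivity of the sum. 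It does, since $F(x)\ge\eta\inf\fid+\tfrac{1-\alpha}{2\alpha}\|x\|^2-\tfrac{L}{\alpha}\|x\|-K$.

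\emph{Conclusion.} Parts \ref{prop:pgd_convergence_i} and \ref{prop:pgd_convergence_ii} follow from \cref{prop:generic_convergence}. If $F$ is KL, \cref{prop:generic_convergence_kl} gives finite length and $x_k\to x^*$. Passing to the limit in the relative error bound, with $\|x_k-x_{k+1}\|\to0$ and $\nabla F$ continuous, gives $\nabla F(x^*)=0$, so $x^*$ is a critical point of $F$.
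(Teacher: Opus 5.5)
Your proposal is correct and follows essentially the same route as the paper. Like the paper, you verify sufficient decrease with $a=(1-\eta L_{\mathbf{fid}})/2$ by comparing the prox objective at $x_{k+1}$ and $x_k$ and adding the descent lemma, relative error via first-order optimality of the prox, and precompactness via coercivity of $F$ on a sublevel set, then invoke the two generic convergence propositions. Your explicit coercivity bound and your limit argument giving $\nabla F(x^*)=0$ spell out two points the paper leaves implicit; the paper absorbs the criticality of the limit into the cited result of Attouch et al.
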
 

\begin{proof}
    First, by~\cref{lem:coercive_regularizer} and~\cref{lem:C_1_regularizer}, $\lpn (y) \in \prox_{\reg}(y)$ for all $y \in \R^n$, where $\reg$ is coercive and $C^1$. Therefore, we have $x_{k+1} \in \prox_{\reg}(x_k - \eta \nabla \fid(x_k))$ for all $k$. We now show that the standard conditions apply to $\{x_k\}$ and $F$.
    \paragraph{\cref{sufficient_decrease}} $x_{k+1}$ minimizes $x \mapsto \reg(x) + \tfrac{1}{2}\|x-(x_k-\eta \nabla \fid(x_k))\|^2,$ meaning
    \begin{equation}
        \reg(x_{k+1})+\frac{1}{2}\|x_{k+1}-x_k +\eta \nabla \fid(x_k)\|^2 \leq \reg(x_k)+\frac{1}{2}\|\eta \nabla \fid (x_k)\|^2.
    \end{equation}
    Expanding the square, we can write this as
    \begin{equation}\label{eq:pgd_convergence1}
        \reg(x_{k+1})\leq \reg(x_k)-\langle \eta \nabla \fid(x_k),x_{k+1}-x_k\rangle -\frac{1}{2}\|x_k - x_{k+1}\|^2.
    \end{equation}
    By the descent lemma; see~\cite[Lemma 5.7]{beck2017first}, for example, \begin{equation} \fid(x_{k+1}) \leq \fid(x_k) + \langle \nabla \fid(x_k),x_{k+1}-x_k\rangle + \frac{L_{\mathbf{fid}}}{2}\|x_k - x_{k+1}\|^2.\end{equation} Multiplying this inequality by $\eta$ and combining with~\cref{eq:pgd_convergence1},\begin{equation}
        F(x_{k+1}) + \frac{1-\eta L_{\mathbf{fid}}}{2}\|x_k - x_{k+1}\|^2\leq F(x_k),
    \end{equation}
    which is exactly~\cref{sufficient_decrease} with $a = \tfrac{1-\eta L_{\mathbf{fid}}}{2} > 0$, a positive number.   
    \paragraph{\cref{relative_error}}
    
    By the first-order optimality of $x_{k+1}$ in the proximal subproblem, \begin{equation}
        \nabla \reg(x_{k+1}) + (x_{k+1} - (x_k - \eta \nabla \fid (x_k))) = 0. 
    \end{equation}
    Adding $\eta \nabla \fid(x_{k+1})$ to each side and rearranging,
    \begin{equation}
         \nabla \reg(x_{k+1}) + \eta \nabla \fid(x_{k+1}) = x_k - x_{k+1} +\eta \nabla \fid (x_k)- \eta \nabla \fid(x_{k+1})
    \end{equation}
    and taking a norm, we get
        $\|\nabla F(x_{k+1})\| \leq (1+\eta L_{\mathbf{fid}})\|x_k -x_{k+1}\| \leq 2\|x_k - x_{k+1}\|,$
    which is~\cref{relative_error}.

    \paragraph{\cref{precompactness}}  Since $F(x_k) \leq F(x_0)$, all iterates lie in the sublevel set $\{F \leq F(x_0)\}$. This set is compact since $\fid$ is bounded below and $\reg$ is coercive, making $F$ coercive ($F$ is continuous, hence lsc). The sequence is therefore bounded, and we can find a convergent subsequence, and~\cref{precompactness} follows, again by the continuity of $F$.

    \ref{prop:pgd_convergence_i} and \ref{prop:pgd_convergence_ii} therefore follow from~\cref{prop:generic_convergence}, and \ref{prop:pgd_convergence_iii} follows from~\cref{prop:generic_convergence_kl}.
\end{proof}

\subsection{Denoiser Averaging}\label{subsection:denoiser_averaging}
When a denoiser enforces excessive regularization in a PnP algorithm, usually indicated by oversmoothing or artifacts, it is common practice to average it with the identity~\cite{goujon2023neural,tan2023provably,martin2025pnp,bohra2021learning}. Given a denoiser $D_\sigma$ and an averaging parameter $\lambda \in (0,1)$, the averaged denoiser is given by \begin{equation}\label{eq:averaged_denoiser} 
    D_\sigma^{(\lambda)} \coloneqq (1-\lambda)I_n   + \lambda D_\sigma.
\end{equation}
For a generic denoiser, averaging need not preserve its original interpretation or even model class. One advantage of the LPN framework is that the average of an LPN with the identity is still an LPN, which we make precise in the following result. 
\begin{proposition}[Closure of LPNs under averaging]\label{prop:closure_of_lpns}
    Let $\lpn$ be an LPN with potential $\potential$ and implicit regularizer $\reg$. For any $\lambda \in (0,1)$, define \begin{equation}\label{eq:lpn_averaged}
        \lpn^{(\lambda)} \coloneqq (1-\lambda)I_n + \lambda \lpn.
    \end{equation}
    Then $\lpn^{(\lambda)}$ is an LPN with strong convexity modulus $\alpha_\lambda \coloneqq 1-\lambda(1-\alpha)$.
\end{proposition}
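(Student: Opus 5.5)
The plan is to exhibit $\lpn^{(\lambda)}$ directly in the form required by \cref{def:LPN}. That means finding an ICNN $\tilde\psi$ with Lipschitz activation and a modulus $\alpha_\lambda\in(0,1)$ such that $\lpn^{(\lambda)}(y)\in\partial\tilde\psi(y)+\alpha_\lambda y$ for all $y$.

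Since $\lpn(y)\in\partial\psi_\theta(y)+\alpha y$, we have
\begin{equation*}
\lpn^{(\lambda)}(y)=(1-\lambda)y+\lambda\lpn(y)\in \lambda\,\partial\psi_\theta(y)+\bigl((1-\lambda)+\lambda\alpha\bigr)y .
\end{equation*}
Here $(1-\lambda)+\lambda\alpha = 1-\lambda(1-\alpha)=\alpha_\lambda$. Because $\lambda>0$, the scaling rule for convex subdifferentials gives $\lambda\,\partial\psi_\theta(y)=\partial(\lambda\psi_\theta)(y)$. So it suffices to take $\tilde\psi\coloneqq\lambda\psi_\theta$, provided we check two things: that $\tilde\psi$ is an ICNN in the sense of \cref{def:ICNN} with a Lipschitz activation, and that $\alpha_\lambda\in(0,1)$.

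The modulus condition is immediate. Since $0<1-\alpha<1$ and $\lambda\in(0,1)$, we get $0<\lambda(1-\alpha)<1$, hence $\alpha_\lambda\in(0,1)$. Note also $\alpha_\lambda>\alpha$, which fits the intuition that averaging weakens the prior.

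The main step is showing that $\lambda\psi_\theta$ is again an ICNN, since the architecture in \cref{def:ICNN} outputs $z_K=\sigma(\cdot)$ directly and has no free output scaling. I would try to absorb $\lambda$ into the parameters.

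\emph{First approach: push $\lambda$ through the layers.} Suppose $\sigma$ were positively homogeneous, i.e.\ $\sigma(\lambda t)=\lambda\sigma(t)$, as for ReLU. Then replacing $(H_i,b_i)$ by $(\lambda H_i,\lambda b_i)$ for all $i$ and keeping the $W_i$ gives hidden states $\lambda z_i$ by induction. The $W_i$ stay nonnegative, so the result is an ICNN computing $\lambda\psi_\theta$.

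\emph{General activations.} For an arbitrary convex, nondecreasing, Lipschitz $\sigma$ this trick fails, for example with softplus. Two fallbacks are available.
\begin{itemize}
\item Read \cref{def:ICNN} as allowing a layer-dependent activation, or equivalently a final layer scaling. Then $\tilde\sigma_K\coloneqq\lambda\sigma$ is still convex, nondecreasing and Lipschitz, so the proof goes through.
\item Append an extra layer whose activation acts as the identity on the range of $z_K$. This is awkward in general.
\end{itemize}
I expect the cleanest resolution is to note that the class of ICNNs is closed under nonnegative scaling of the output, with the last activation replaced by $\lambda\sigma$. This preserves the properties of the activation that \cref{remark:regularity_of_phi} and \cref{lem:coercive_regularizer} actually use, namely convexity, monotonicity and Lipschitz continuity. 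So the convergence theory applies verbatim with potential $\lambda\potential+\tfrac{1-\lambda}{2}\|\cdot\|^2=\lambda\psi_\theta+\tfrac{\alpha_\lambda}{2}\|\cdot\|^2$.

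Finally, for the subgradient selection I would note that if $\lpn(y)=g+\alpha y$ with $g\in\partial\psi_\theta(y)$, then $\lambda g\in\partial(\lambda\psi_\theta)(y)$. So any admissible evaluation of $\lpn$ yields an admissible evaluation of $\lpn^{(\lambda)}$, which completes the argument.
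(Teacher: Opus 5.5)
Your argument is the paper's: the same inclusion $\lpn^{(\lambda)}(y)\in\partial(\lambda\psi_\theta)(y)+\alpha_\lambda y$, the same potential $\lambda\potential+\tfrac{1-\lambda}{2}\|\cdot\|^2$, and the same check that $\alpha_\lambda\in(0,1)$. The paper simply asserts that $\lambda\psi_\theta$ is an ICNN, and the point you flag can be closed without loosening \cref{def:ICNN}. Scale $(H_i,b_i)\mapsto(\lambda H_i,\lambda b_i)$ and use the single activation $\tilde\sigma(t)\coloneqq\lambda\sigma(t/\lambda)$; by induction the hidden states are $\lambda z_i$, and $\tilde\sigma$ is still convex, nondecreasing and $L_\sigma$-Lipschitz (for $S_\beta$ it is $S_{\beta/\lambda}$, for $H_\delta$ it is $H_{\lambda\delta}$), so your homogeneous-case trick works for every admissible activation.
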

\begin{proof}
    By construction, $\lpn(x) \in \partial \psi_\theta(x) + \alpha x$, so 
    \begin{align}\nonumber
        \lpn^{(\lambda)}(x) & = (1-\lambda)x+\lambda \lpn(x) \in \lambda \partial \psi_\theta(x) + (1-\lambda + \lambda \alpha)x \\
        & = \partial (\lambda \psi_\theta)(x) + \alpha_\lambda x = \partial \potential ^{(\lambda)}(x).
    \end{align}
    $\lpn^{(\lambda)}$ is then clearly an LPN with strongly convex potential $ \potential ^{(\lambda)} \coloneqq \lambda \potential + \frac{1-\lambda}{2}\|\cdot\|^2$. The rest of the result follows from the fact that $\lambda \psi_\theta$ is also an ICNN and $\alpha_\lambda \in (0,1)$.
\end{proof}

The regularizer associated with the averaged LPN also admits a simple variational interpretation which we make precise in the following corollary. 

\begin{corollary}
\label{cor:averaged_lpn}
Let $\lpn$ be an LPN with implicit regularizer $\reg$, let $\lambda\in(0,1)$, and define $\lpn^{(\lambda)}$ as in~\cref{eq:lpn_averaged}. Consider $\{x_k\}_{k \geq 1}$, the iterates of standard LPN-PGD in~\cref{alg:lpn_pgd_vanilla} using $\lpn^{(\lambda)}$ with implicit regularizer $\reg^{(\lambda)}$, and let $F^{(\lambda)}\coloneqq \eta\fid + \lambda M_{1-\lambda}\reg$. Then $\reg^{(\lambda)} = \lambda M_{1-\lambda}\reg$ and the following hold:
\begin{enumerate}[label=\normalfont(\roman*)]
\crefalias{enumi}{proposition}
    \item$F^{(\lambda)}(x_k)$ is non-increasing and convergent.
    \item $\sum_{k=1}^{+\infty} \|x_k-x_{k+1}\|^2 < +\infty$.
    \item If in addition $F^{(\lambda)}$ is KL, then $\sum_{k=1}^{+\infty}\|x_k-x_{k+1}\| < +\infty,$ and $\{x_k\}_{k \geq 1}$ converges to a critical point $x^*$ of $F^{(\lambda)}$. 
    \end{enumerate}
\end{corollary}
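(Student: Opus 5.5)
The plan is to first establish the identity $\reg^{(\lambda)} = \lambda M_{1-\lambda}\reg$ by a direct conjugate computation. Items (i)--(iii) should then be immediate. By~\cref{prop:closure_of_lpns}, $\lpn^{(\lambda)}$ is itself an LPN with potential $\potential^{(\lambda)} = \lambda\potential + \tfrac{1-\lambda}{2}\|\cdot\|^2$, so~\cref{prop:pgd_convergence} applies verbatim to the iterates of~\cref{alg:lpn_pgd_vanilla} run with $\lpn^{(\lambda)}$. Its objective is $\eta\fid + \reg^{(\lambda)}$, and once the identity is known this is exactly $F^{(\lambda)}$.

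For the identity, I would start from the fact that $\potential$ is proper, convex, lower semicontinuous and finite everywhere. Fenchel--Moreau then gives $\potential^* = \reg + \tfrac12\|\cdot\|^2$ and $\potential = (\reg+\tfrac12\|\cdot\|^2)^*$. Next I would compute $(\potential^{(\lambda)})^*$ via the conjugate-of-a-sum rule $(g+h)^* = g^* \,\square\, h^*$ (infimal convolution). This rule holds with the infimum attained, since $h = \tfrac{1-\lambda}{2}\|\cdot\|^2$ is finite and continuous everywhere. The two conjugates are
\begin{equation}
(\lambda\potential)^*(y) = \lambda\potential^*(y/\lambda) = \lambda\reg(y/\lambda) + \tfrac{1}{2\lambda}\|y\|^2, \qquad h^*(y) = \tfrac{1}{2(1-\lambda)}\|y\|^2 .
\end{equation}
Hence
\begin{equation}
(\potential^{(\lambda)})^*(y) = \inf_{z}\Bigl\{\lambda\reg(z/\lambda) + \tfrac{1}{2\lambda}\|z\|^2 + \tfrac{1}{2(1-\lambda)}\|y-z\|^2\Bigr\}.
\end{equation}
Substituting $z=\lambda u$ and subtracting $\tfrac12\|y\|^2$ is then a matter of collecting terms:
\begin{itemize}
\item the $\|u\|^2$ coefficient is $\tfrac{\lambda}{2}+\tfrac{\lambda^2}{2(1-\lambda)} = \tfrac{\lambda}{2(1-\lambda)}$;
\item the $\|y\|^2$ coefficient is $\tfrac{1}{2(1-\lambda)}-\tfrac12 = \tfrac{\lambda}{2(1-\lambda)}$;
\item the cross term is $-\tfrac{\lambda}{1-\lambda}\langle y,u\rangle$.
\end{itemize}
Together these form $\tfrac{\lambda}{2(1-\lambda)}\|u-y\|^2$. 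Therefore
\begin{equation}
\reg^{(\lambda)}(y) = \inf_u\Bigl\{\lambda\reg(u) + \tfrac{\lambda}{2(1-\lambda)}\|u-y\|^2\Bigr\} = \lambda M_{1-\lambda}\reg(y).
\end{equation}

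The only delicate step is justifying the conjugate-of-a-sum formula, since $\reg$ itself is nonconvex. Note, however, that the formula is applied to the convex functions $\lambda\potential$ and $h$, not to $\reg$. If I prefer to avoid citing it, I can verify the identity directly. Write $\sup_x\{\langle x,y\rangle - \lambda\potential(x) - h(x)\}$, substitute $\potential(x) = \sup_u\{\langle u,x\rangle - \potential^*(u)\}$, and exchange sup and inf via a minimax argument. Here the inner problem in $x$ is strongly concave and the coupling is bilinear, so the exchange is justified.

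Everything else is inherited from~\cref{prop:pgd_convergence}. That includes coercivity of $\reg^{(\lambda)}$, which holds because it is the regularizer of an LPN, as well as sufficient decrease, relative error and precompactness. The KL hypothesis in (iii) is exactly the one stated in the corollary.
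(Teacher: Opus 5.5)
Your proposal is correct and follows the paper's proof essentially step for step. The paper also applies the conjugate-of-a-sum (infimal convolution) identity to $\lambda\potential+\tfrac{1-\lambda}{2}\|\cdot\|^2$, rescales $z\to\lambda z$, substitutes $\potential^*=\reg+\tfrac12\|\cdot\|^2$, and collects the quadratic terms into $\tfrac{\lambda}{2(1-\lambda)}\|x-z\|^2$; it then obtains (i)--(iii) by applying \cref{prop:pgd_convergence} to the averaged LPN supplied by \cref{prop:closure_of_lpns}.
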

\begin{proof}
In the following, we use the infimal convolution identity [Theorem 4.17]\cite{beck2017first}. From \cref{prop:closure_of_lpns}, $\lpn^{(\lambda)}$ is an LPN with potential $\potential ^{(\lambda)}(x)$, so we can write its implicit regularizer as 
\begin{align}
\reg^{(\lambda)}(x)
&=\left(\lambda \potential+\frac{1-\lambda}{2}\|\cdot\|^2\right)^*(x)-\frac{1}{2}\|x\|^2 \nonumber\\
&=\inf_{z\in\R^n}\left\{\lambda \potential^*(z)+\frac{1}{2(1-\lambda)}\|x-\lambda z\|^2\right\}-\frac{1}{2}\|x\|^2,
\end{align}
after a change of variables $z\to \lambda z$. Next, since $\potential^*(z)=\reg(z)+\frac{1}{2}\|z\|^2$,
\begin{align}
\reg^{(\lambda)}(x)
&=\inf_{z\in\R^n}\left\{\lambda\reg(z)+\frac{\lambda}{2}\|z\|^2+\frac{1}{2(1-\lambda)}\|x-\lambda z\|^2-\frac{1}{2}\|x\|^2\right\} \nonumber\\
&=\lambda\inf_{z\in\R^n}\left\{\reg(z)+\frac{1}{2(1-\lambda)}\|x-z\|^2\right\}
=\lambda M_{1-\lambda}\reg(x),
\end{align}
Since $\tfrac{\lambda}{2}\|z\|^2 +\tfrac{1}{2(1-\lambda)}\|x-\lambda z\|^2 -\tfrac{1}{2}\|x\|^2 = \tfrac{\lambda}{2(1-\lambda)}\|x-z\|^2.$ We can conclude that $\reg^{(\lambda)} = \lambda M_{1-\lambda}\reg$, and the result follows from \cref{prop:pgd_convergence}.
\end{proof}

Thus, averaging the denoiser with the identity corresponds exactly to replacing the learned regularizer by a scaled Moreau-smoothed version of itself. The averaging parameter $\lambda$ therefore provides a principled way to weaken the learned prior, while preserving the LPN structure and the associated convergence guarantee. We note that the objective $F^{(\lambda)}$ associated with this averaged denoiser is KL whenever the LPN objective and data fidelity term are constructed from reasonably standard functions. This is made precise in~\cref{prop:lpn_objective_KL}.

\subsection{Regularizer Scaling via Inexact PGD}\label{subsection:regularizer_scaling} In this subsection, we present a method to minimize objectives of the form $F(x)=\eta \fid(x)+\gamma \reg(x)$ for $0<\gamma\leq 1$, through inexact proximal gradient descent~\cite{lauga2026characterizations,schmidt2011convergence,gu2018inexact,yao2016efficient,bonettini2020convergence}
Whereas denoiser averaging weakens the LPN by replacing $\reg$ with a Moreau-smoothed surrogate, one may instead wish to change its strength by simply scaling the regularizer. For simplicity, we assume that the LPN activation is differentiable. The same construction can be formulated for nondifferentiable activations using subgradient optimality conditions. Given some LPN $\lpn$ with potential function $\potential$ and implicit regularizer $\reg$, we seek to evaluate 
$\prox_{\gamma \reg}$ for some $\gamma > 0$. Define $w^*$ as
 \begin{equation}\label{eq:w_star}
    w^* \coloneqq \argmin_w G(w), \qquad G(w) \coloneqq \frac{\gamma}{2}\|w\|^2 + (1-\gamma)\potential(w) - \langle x,w\rangle,
\end{equation}
then $\prox_{\gamma \reg}(x) = \lpn (w^*).$ Indeed, since $\potential = \psi_{\theta} + \tfrac{\alpha}{2}\|\cdot\|^2$ for some $0<\alpha<1$,
\begin{equation}
    G(w) = (1-\gamma)\psi_\theta(w) + \frac{\gamma + (1-\gamma)\alpha}{2}\|w\|^2 - \langle x,w\rangle. 
\end{equation}
For $0 < \gamma \leq 1$, $(1-\gamma)\psi_\theta$ is convex, so $G$ is $(\gamma+(1-\gamma)\alpha)$-strongly convex, and therefore admits a unique minimizer $w^*$. Setting $z^* \coloneqq \lpn(w^*) = \nabla \potential (w^*)$, $w^* = \nabla \potential^*(z^*)$ from~\cref{eq:conjugate_gradient_identity}. Since $\nabla \reg(z) = \nabla\potential^*(z)-z$, \begin{equation}
    \gamma \nabla \reg(z^*)+z^*-x = \gamma(w^*-z^*) + z^*-x 
    = \gamma w^*+(1-\gamma)z^*-x = 0,
\end{equation}
where the last equality follows from the first-order optimality condition of~\cref{eq:w_star}. The result is exactly the first-order optimality condition for $\prox_{\gamma \reg}$. Indeed, $\reg$ is $1$-weakly convex by~\cref{eq:bar_phi}, so $z\mapsto \gamma \reg(z) + \tfrac{1}{2}\|z-x\|^2$ is convex for $0<\gamma \leq 1$. We can therefore conclude that $\lpn (w^*) =  \prox_{\gamma \reg}(x)$.

When $\potential$ is differentiable, $G$ can be minimized using standard first-order methods, since its gradient can be evaluated directly from the original LPN:
    $\nabla G(w) = \gamma w + (1-\gamma)\lpn (w)-x.$
In terms of implementation, one could solve~\cref{eq:w_star} to convergence at each iterate, thereby recovering the exact scaled proximal step $\prox_{\gamma \reg}(x_k - \eta \nabla \fid(x_k))$. In that case, the convergence results developed above apply directly. However, solving the inner problem to such high accuracy is unnecessary. Because $G$ is strongly convex and its optimality residual is directly computable, one can terminate the inner solve early and still retain convergence guarantees. In particular, it is sufficient to require the residual to decay at a prescribed summable rate, leading to the inexact scaled LPN-PGD scheme in~\cref{alg:lpn_pgd_scaled}, whose convergence guarantees are established in the following result.

\begin{algorithm}[H]
\caption{Inexact Scaled LPN-PGD}
\label{alg:lpn_pgd_scaled}
\begin{algorithmic}[1]
\REQUIRE Initial point $x_0\in\R^n$, $L_{\mathbf{fid}}$-smooth and bounded below $\fid:\R^n\to\R$, step size $0<\eta<1/L_{\mathbf{fid}}$, LPN $\lpn$ as in~\cref{def:LPN} with potential $\potential$, regularizer scaling $0<\gamma\leq1$, number of steps $K$, and a fixed constant $C>0$. Further, the activation of $\lpn$ must be differentiable.
\FOR{$k=0,1,\dots,K-1$}
    \STATE Let \[G_k(w)\coloneqq \frac{\gamma}{2}\|w\|^2+(1-\gamma)\potential(w)-\langle x_k-\eta\nabla\fid(x_k),w\rangle\]
    
    \STATE Find $w_{k+1}$ such that
    \[
    \|\nabla G_k(w_{k+1})\|\leq \frac{C}{(k+1)^{p}}, \quad {p}>3/2.
    \]
    \STATE $x_{k+1}\coloneqq \lpn(w_{k+1})$
\ENDFOR
\RETURN $x_K$
\end{algorithmic}
\end{algorithm}

\begin{proposition}[Convergence of Inexact Scaled LPN-PGD]\label{prop:scaled_pgd_convergence}
Consider the iterates of inexact scaled LPN-PGD in~\cref{alg:lpn_pgd_scaled}, $\{x_k\}_{k\geq 1}$, where $\reg$ denotes the implicit regularizer associated with $\lpn$, and define
\begin{equation}
    F \coloneqq \eta \fid + \gamma \reg.
\end{equation}
Then the following hold:
\begin{enumerate}[label=\normalfont(\roman*)]
\crefalias{enumi}{proposition}
    \item \label{prop:scaled_pgd_convergence_i} $F(x_k)$ converges.
    \item \label{prop:scaled_pgd_convergence_ii} $\sum_{k=1}^{+\infty}\|x_k-x_{k+1}\|^2<+\infty$.
    \item \label{prop:scaled_pgd_convergence_iii} If $\xi(x,t) = F(x)+\tfrac{1}{2}t^2$ is KL, then $\sum_{k=1}^{+\infty}\|x_k-x_{k+1}\|<+\infty$, and $\{x_k\}_{k\geq 1}$ converges to a critical point $x^*$ of $F$.
\end{enumerate}
\end{proposition}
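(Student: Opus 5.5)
Throughout, write $y_k \coloneqq x_k-\eta\nabla\fid(x_k)$ and $e_{k+1}\coloneqq\nabla G_k(w_{k+1})$, so that $\|e_{k+1}\|\le \varepsilon_{k+1}\coloneqq C(k+1)^{-p}$. The plan is to show that each iterate $x_{k+1}$ is an \emph{exact} scaled proximal step taken at a slightly perturbed point. We then rerun the proof of \cref{prop:pgd_convergence} while carrying the perturbation along as a summable error.

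\emph{Step 1 (exact prox at a perturbed point).} Since $x_{k+1}=\lpn(w_{k+1})=\nabla\potential(w_{k+1})$, \cref{eq:conjugate_gradient_identity} gives $w_{k+1}=\nabla\potential^*(x_{k+1})=\nabla\reg(x_{k+1})+x_{k+1}$. Substituting into $\nabla G_k(w_{k+1})=\gamma w_{k+1}+(1-\gamma)x_{k+1}-y_k$ yields
\begin{equation*}
\gamma\nabla\reg(x_{k+1})+x_{k+1}-(y_k+e_{k+1})=0 .
\end{equation*}
As observed before \cref{alg:lpn_pgd_scaled}, $z\mapsto\gamma\reg(z)+\tfrac12\|z-u\|^2$ is convex for $0<\gamma\le1$. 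Hence $x_{k+1}=\prox_{\gamma\reg}(y_k+e_{k+1})$.

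\emph{Step 2 (inexact sufficient decrease, relative error, boundedness).} We compare the prox objective at $x_{k+1}$ and at $x_k$, exactly as in \cref{eq:pgd_convergence1}, and then apply the descent lemma to $\fid$. This should give
\begin{equation*}
F(x_{k+1})+\tfrac{1-\eta L_{\mathbf{fid}}}{2}\|x_k-x_{k+1}\|^2\le F(x_k)+\langle e_{k+1},x_{k+1}-x_k\rangle .
\end{equation*}
Young's inequality absorbs half the quadratic term, leaving
\begin{equation*}
F(x_{k+1})+a\|x_k-x_{k+1}\|^2\le F(x_k)+c\,\varepsilon_{k+1}^2 ,
\end{equation*}
with $a=\tfrac{1-\eta L_{\mathbf{fid}}}{4}$ and a constant $c>0$. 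Since $p>1/2$, the errors $\varepsilon_k^2$ are summable. Therefore $F(x_k)+c\sum_{j>k}\varepsilon_j^2$ is nonincreasing. Moreover, $F(x_k)\le F(x_0)+c\sum_j\varepsilon_j^2$ for every $k$. Because $F$ is coercive (by \cref{lem:coercive_regularizer} and $\fid$ bounded below), the iterates are bounded and $F$ is bounded below along them. This gives (i). Telescoping the displayed inequality gives (ii). The optimality condition of Step 1 also gives the relative error bound
\begin{equation*}
\|\nabla F(x_{k+1})\|\le 2\|x_k-x_{k+1}\|+\varepsilon_{k+1}.
\end{equation*}

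\emph{Step 3 (KL and finite length).} Here we use the augmented function $\xi$. Set $t_k\coloneqq\bigl(2c\sum_{j\ge k}\varepsilon_j^2\bigr)^{1/2}$ and $z_k\coloneqq(x_k,t_k)$. By Step 2,
\begin{equation*}
\xi(z_{k+1})+a\|x_k-x_{k+1}\|^2\le\xi(z_k).
\end{equation*}
On the other side,
\begin{equation*}
\|\nabla\xi(z_{k+1})\|\le 2\|x_k-x_{k+1}\|+\delta_{k+1},\qquad \delta_{k+1}\coloneqq\varepsilon_{k+1}+t_{k+1}.
\end{equation*}
The error $\delta_k$ is summable: $t_k\asymp k^{-(p-1/2)}$, and this is exactly where $p>3/2$ is needed. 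Boundedness of $\{z_k\}$ supplies a cluster point, playing the role of \cref{precompactness}.

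We then run the Attouch--Bolte--Svaiter argument behind \cref{prop:generic_convergence_kl}, in its variant that allows a summable additive error in the relative-error condition. Concavity of the desingularizing function gives a bound on $\varphi(\xi(z_k)-\xi^*)-\varphi(\xi(z_{k+1})-\xi^*)$ from below. Combined with the two displays above and the AM--GM inequality, this yields
\begin{equation*}
2\|x_k-x_{k+1}\|\le\|x_{k-1}-x_k\|+\text{(telescoping $\varphi$ terms)}+\text{const}\cdot\delta_k .
\end{equation*}
Summing this over $k$ gives $\sum_k\|x_k-x_{k+1}\|<\infty$, so $\{x_k\}$ is Cauchy and converges to some $x^*$. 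Letting $k\to\infty$ in the relative error bound gives $\nabla F(x^*)=0$.

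I expect this last step to be the main obstacle. Two points need care. First, the KL inequality must be applied at $z^*=(x^*,0)$, the limit of the $z_k$ whose $t$-coordinate is zero. Second, the decrease of $\xi$ controls only the $x$-increments, not the $t$-increments. The fallback if the second point causes trouble is to bound $\sum_k|t_k-t_{k+1}|\le t_1<\infty$ directly, since $t_k$ is monotone, so the $t$-increments never need to be controlled by the decrease of $\xi$.
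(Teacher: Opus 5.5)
Your proposal is correct and is essentially the paper's own argument. The paper also rewrites $x_{k+1}$ as an exact step $\prox_{\gamma\reg}(u_k+e_k)$, proves (i)--(ii) from the same inexact sufficient-decrease inequality via the tail-augmented sequence $F(x_k)+r_k$, and obtains (iii) from the augmented function $\xi(x,t)=F(x)+\tfrac12 t^2$; the only difference is that it cites Lemma~8 and Proposition~1 of Sun et al.\ where you derive the Young's-inequality bound and rerun the KL argument with summable errors yourself. A side benefit of your explicit bookkeeping is that it shows why $p>3/2$ is needed: it is the summability of the tail coordinate $t_k\asymp k^{1/2-p}$, not of $\|e_k\|$ alone, that requires it.
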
\begin{proof}
Define $u_k \coloneqq x_k - \eta \nabla \fid (x_k)$ and $e_k \coloneqq \nabla G_k(w_{k+1})$. Since $x_{k+1} = \lpn(w_{k+1}) \in \prox_{\reg}(w_{k+1})$, we have $\nabla \reg(x_{k+1}) = w_{k+1}-x_{k+1}$. Therefore,
\begin{equation}
e_k = \gamma w_{k+1} + (1-\gamma)x_{k+1}-u_k = \gamma \nabla \reg(x_{k+1}) + x_{k+1}-u_k.
\end{equation}
Equivalently, $\gamma \nabla \reg(x_{k+1}) + x_{k+1}-(u_k + e_k)=0$, which is precisely the first-order optimality condition for $x_{k+1} \in \prox_{\gamma \reg}(u_k+e_k)$. Using $h = \eta$, $f = \fid$, $g = \tfrac{\gamma}{\eta}\reg$ for~\cite[Lemma 8]{sun2017convergence},

\begin{equation}\label{eq:inexact_F}
F (x_k)-F(x_{k+1}) \geq \frac{1-\eta L_{\mathbf{fid}}}{4}\|x_{k+1}-x_k\|^2 - \frac{1}{1-\eta L_{\mathbf{fid}}}\|e_k\|^2.
\end{equation}
Since $\|e_k\|\leq C/(k+1)^p$ with $p>3/2$, we have $\sum_{k=0}^\infty\|e_k\|^2<\infty$. Summing~\cref{eq:inexact_F} and using that $F$ is bounded below therefore yields \ref{prop:scaled_pgd_convergence_ii}. Define the tail error $r_k \coloneqq \frac{1}{1-\eta L_{\mathbf{fid}}}\sum_{l=k}^\infty\|e_l\|^2$ and the augmented sequence $\widetilde{F}_k \coloneqq F(x_k)+r_k$. Then~\cref{eq:inexact_F} becomes
\begin{equation}
\widetilde{F}_{k+1} \leq \widetilde{F}_k-\frac{1-\eta L_{\mathbf{fid}}}{4}\|x_{k+1}-x_k\|^2.
\end{equation} Since $F$ is bounded below, $\widetilde{F}_k$ is bounded below, so $\widetilde{F}_k\to\widetilde{F}^*$ for some $\widetilde{F}^*\in\R$. Since $\sum_{k=0}^\infty\|e_k\|^2<\infty$, we have $r_k\to0$. Therefore, $F(x_k)=\widetilde{F}_k-r_k\to\Tilde{F}^*$, which proves \ref{prop:scaled_pgd_convergence_i}.

We next note that~\cite[Proposition 1]{sun2017convergence} is stated under the assumption that $F$ is semi-algebraic. However, inspection of their proof shows that semi-algebraicity is only used to establish the KL property of the function $\xi(x,t)=F(x)+\tfrac12 t^2$. Thus, the same proof applies whenever $\xi$ is KL. Since $\|e_k\|\leq C/(k+1)^{p} = \mathcal{O}(1/k^{p})$ for $p>1$, $\sum_{k=0}^\infty\|x_{k+1}-x_k\|<\infty$, and $\{x_k\}_{k\geq 1}$ converges to a critical point $x^*$ of $F$. \ref{prop:scaled_pgd_convergence_iii} follows. 
\end{proof}

\section{Experiments}\label{section:experiments}
In this section, we demonstrate that proximal gradient descent using Learned Proximal Networks is competitive with modern approaches. The code to reproduce our experiments is available on GitHub.\footnote{\url{https://github.com/sparsity-group/pnplpn}} All training and testing were performed on a pair of NVIDIA RTX PRO 6000 Blackwell Max-Q GPUs using Ubuntu 24.04. 

In each experiment, the LPN uses a Huberized ReLU activation, given by
\begin{equation}\label{eq:huberized_relu}
H_\delta (t) \coloneqq
\begin{cases}
0, & t \leq 0,\\
\dfrac{t^2}{2\delta}, & 0<t<\delta,\\
t-\dfrac{\delta}{2}, & t\geq\delta.\\
\end{cases}
\end{equation}

\begin{figure}[t]
\centering
\begin{tikzpicture}
\begin{groupplot}[
    group style={
        group size=2 by 1,
        horizontal sep=1.2cm,
    },
    width=2.55in,
    height=1.95in,
    axis lines=middle,
    xlabel={$x$},
    samples=200,
    domain=-2:2,
]

\nextgroupplot[
    title={Huberized ReLU},
    ymin=-0.15,
    ymax=2.1,
    xmin=-2,
    xmax=2,
    ylabel={},
    legend style={
        draw=none,
        font=\small,
        at={(0.5,-0.075)},
        anchor=north,
        legend columns=-1,
        /tikz/every even column/.append style={column sep=0.35cm},
    },
]

\addplot[
    thick,
    dashed,
]
{max(0,x)};
\addlegendentry{ReLU}

\addplot[
    thick,
]
{x < 0 ? 0 :
 (x <= 0.5 ? x^2/(2*0.5) :
  x - 0.5/2)};
\addlegendentry{Huberized ReLU}

\nextgroupplot[
    title={Huber loss},
    ymin=-0.15,
    ymax=2.1,
    xmin=-2,
    xmax=2,
    ylabel={},
    legend style={
        draw=none,
        font=\small,
        at={(0.5,-0.075)},
        anchor=north,
        legend columns=-1,
        /tikz/every even column/.append style={column sep=0.35cm},
    },
]

\addplot[
    thick,
    dashed,
]
{abs(x)};
\addlegendentry{$|x|$}

\addplot[
    thick,
    dotted,
]
{0.5*x^2};
\addlegendentry{$\frac12 x^2$}

\addplot[
    thick,
]
{abs(x) <= 1 ? 0.5*x^2 : abs(x)-0.5};
\addlegendentry{Huber}

\end{groupplot}
\end{tikzpicture}

\caption{ The Huberized ReLU compared to Huber loss. Both replace the nonsmooth kink at the origin with a quadratic transition band.}
\label{fig:huberized_relu}
\end{figure}

All LPNs follow the architecture introduced in~\cref{def:LPN}, with exact architectures summarized in~\cref{tab:lpn_architectures}. Training details are summarized in~\cref{tab:lpn_training}, including links to the relevant datasets.

\begin{table}[htb!]
\centering
\small
\caption{LPN architectural details for the inverse problems experiments.}
\label{tab:lpn_architectures}
\setlength{\tabcolsep}{4pt}
\renewcommand{\arraystretch}{1.1}
\begin{tabularx}{\textwidth}{
    >{\raggedright\arraybackslash}p{0.17\textwidth}
    >{\raggedright\arraybackslash}X
    >{\raggedright\arraybackslash}X}
\toprule
& \textbf{MRI ($320\times320$)}
& \textbf{Computed Tomography ($512\times512$)} \\
\midrule
\textbf{Feature dimensions}
& $H\times160^2$, $H\times160^2$,
  $H\times80^2$, $H\times80^2$,
  $H\times40^2$, $H\times20^2$,
  $H\times20^2$, $H\times1^2$
& $H\times256^2$, $H\times256^2$,
  $H\times128^2$, $H\times128^2$,
  $H\times64^2$, $H\times32^2$,
  $H\times32^2$, $H\times1^2$ \\
\textbf{Skip connections}
& $5$ input-to-feature convolutions;
  $3\times3$, stride $1$, after bilinear resizing
& $5$ input-to-feature convolutions;
  kernels $3,4,3,4,4$;
  strides $1,2,1,2,2$ \\
\midrule
\textbf{Architecture}
& \multicolumn{2}{>{\raggedright\arraybackslash}p{0.75\textwidth}}{
Hidden dimension $H=256$, $L=7$ layers,
with convolutional kernels $4,3,4,3,4,4,3$ and
strides $2,1,2,1,2,2,1$, followed by global average pooling
and a fully connected layer.
} \\
\textbf{Parameters}
& \multicolumn{2}{>{\raggedright\arraybackslash}p{0.75\textwidth}}{
Huberized-ReLU activation with $\delta=0.1$ and $\alpha=0.8$,
step size $\eta=0.99$.
} \\
\bottomrule
\end{tabularx}
\end{table}
\begin{table}[htb!]
\centering
\small
\caption{Training procedures for the inverse problems experiments.
All images are normalized to $[0,1]$.}
\label{tab:lpn_training}
\setlength{\tabcolsep}{4pt}
\renewcommand{\arraystretch}{1.08}
\begin{tabularx}{\textwidth}{
    >{\raggedright\arraybackslash}p{0.17\textwidth}
    >{\raggedright\arraybackslash}X
    >{\raggedright\arraybackslash}X}
\toprule
& \textbf{MRI}
& \textbf{Computed Tomography} \\
\midrule
\textbf{Dataset}
& fastMRI Multicoil Knee
& Mayo Clinic CT \\
\textbf{Input size}
& $1\times320\times320$
& $1\times512\times512$ \\
\midrule
\textbf{Optimization}
& \multicolumn{2}{>{\raggedright\arraybackslash}p{0.75\textwidth}}{
Adam with cosine annealing from an initial learning rate of $10^{-4}$
to a final learning rate of $10^{-6}$. $40{,}000$ training steps.
Effective batch size of $64$.
} \\
\textbf{Validation}
& \multicolumn{2}{>{\raggedright\arraybackslash}p{0.75\textwidth}}{
$16$ validation images randomly selected before training, evaluated every
$2{,}500$ training steps.
} \\
\bottomrule
\end{tabularx}
\end{table}

We use the clean reference images provided by each dataset, with pixel values normalized to $[0,1]$. The LPNs were trained on pairs ${(\bar x_i,z_i)}$, where $\bar x_i$ is a clean image and $z_i=\bar x_i+\epsilon_i$, with $\epsilon_i$ independent Gaussian noise of variance $0.1$. Each network was trained with a batch size of 64 under mean-squared error loss
\begin{equation}
\mathcal{L}(\theta)
\coloneqq
\frac{1}{64}\sum_{i=1}^{64}
\|f_{\theta,\alpha}(z_i)-\bar x_i\|^2,
\end{equation}
 The strong convexity modulus $\alpha$ was fixed at $0.8$. A selected subset of 16 images from each dataset was reserved for testing and not used during training. For testing, each clean test image $\bar x_i$ is used to generate a measurement $y_i = A \bar x_i + \epsilon_i,$
where $A$ is the forward operator associated with the corresponding inverse problem and $\epsilon_i$ denotes additive measurement noise. In all cases, reconstruction was performed using the least-squares data-fidelity term $
\fid_i(x) \coloneqq \tfrac{1}{2} \|Ax-y_i\|^2.$ In each inverse problem, we evaluate the three LPN-based reconstruction schemes discussed in~\cref{section:convergence_analysis}. First, we apply standard LPN-PGD (\cref{alg:lpn_pgd_vanilla}), using the trained LPN directly. Secondly, we apply averaged LPN-PGD, where the LPN $\lpn$ is replaced with an averaged LPN $\lpn^{(\lambda)} \coloneqq  (1-\lambda)I +\lambda \lpn$, for some $\gamma$ between $0$ and $1$ selected based on empirical reconstruction performance. Finally, we evaluate inexact scaled LPN-PGD (\cref{alg:lpn_pgd_scaled}), which directly scales the implicit regularizer by some $\gamma$ between $0$ and $1$.

\subsection{Magnetic Resonance Imaging (MRI)}
We first consider accelerated multicoil MRI using the fastMRI multicoil knee dataset. The experiments are performed only on the clean images, but the $k$-space data is used for the purpose of estimating the $C=15$ coil sensitivities. For each test image $\bar x_i$, and each coil $c$, we estimate the sensitivity map $S_c$, indexed in two dimensions by $k$ and $j$ with the following procedure. We load the corresponding fully sampled multicoil $k$-space $K_{i,c}$ and retain only the 30 central phase-encoding lines of $k$-space, zeroing the remaining lines, to obtain $K_{i,c}^{\mathrm{cal}}$, the calibration $k$-space. Next, we apply the inverse Fourier transform to obtain a low-resolution image for each coil: $\tilde{x}_{i,c} = \mathcal{F}^{-1}(K_{i,c}^{\mathrm{cal}})$. Each $\tilde{x}_{i,c}$ is approximately a low-resolution $\bar{x}_i$ multiplied by coil $c$'s sensitivity: at a given coordinate $(k,j)$, $\tilde{x}_{i,c}[k,j] \approx (S_c\bar{x}_i^{\mathrm{low}})[k,j].$ We use $\hat{S}_c[k,j] = \tilde{x}_{i,c}[k,j]/R[k,j]$ as the corresponding coil sensitivity, where $R[k,j] = \sqrt{\sum_{c=1}^{C} |\tilde{x}_{i,c}[k,j]|^2}$ is the RSS magnitude across coils of the low-resolution approximation at that pixel. 

The resulting forward operator is $A(x) \coloneqq \begin{bmatrix} M\mathcal{F}(\hat{S}_1 x) &
\dots & M\mathcal{F}(\hat{S}_C x) \end{bmatrix}^T,$
where $\mathcal{F}$ is a two-dimensional Fourier transform, $\hat{S}_c$ is the estimated sensitivity map for coil $c$, and $M$ is a Cartesian sampling mask. The sampling mask is parametrized by an acceleration factor $R$, which determines the fraction of phase-encoding lines retained, and a center fraction $f_{\mathrm{center}}$, which specifies the fraction of central phase-encoding lines that are always sampled. For example, with a mask where $R=4$ and $f_{\mathrm{center}} = 0.1$, $25\%$ of the phase-encoding lines are retained in total, with the central $10\%$ included. The remaining frequencies sampled randomly at a rate of about $\frac{15\%}{90\%} = 16.7\%$.  

The adjoint is explicit. For multicoil data $z=\begin{bmatrix}
    z_1 & \dots & z_C
\end{bmatrix}$, $A^*z = \sum_{c=1}^{C}\widebar{\hat{S}_c}\mathcal{F}^{-1}(Mz_c)$, where
$A^*y$ is called the \emph{zero-filled} reconstruction because the unmeasured Fourier coefficients are implicitly set to zero before applying the inverse Fourier transform. The forward model has operator norm at most $1$ since it uses RSS-normalized coil maps and binary sampling masks. We use a step size of $\eta=0.99$ for all MRI experiments. \cref{fig:mri_reconstruction_vanilla2,fig:mri_reconstruction_vanilla4} show reconstructions obtained using standard LPN-PGD (\cref{alg:lpn_pgd_vanilla}) at acceleration factors $R=2$ and $R=4$, respectively. \cref{fig:mri_reconstruction_3averaged2,fig:mri_reconstruction_3averaged4} show the corresponding reconstructions using averaged LPN-PGD with averaging parameter $\gamma=0.3$, while \cref{fig:mri_reconstruction_3gamma2,fig:mri_reconstruction_3gamma4} show reconstructions obtained using inexact scaled LPN-PGD (\cref{alg:lpn_pgd_scaled}) with regularizer-scaling parameter $\gamma=0.3$. Each MRI reconstruction is displayed after 100 iterations. 
\FloatBarrier
\begin{figure}[p!]
    \centering
     \includegraphics[width=.85\linewidth]{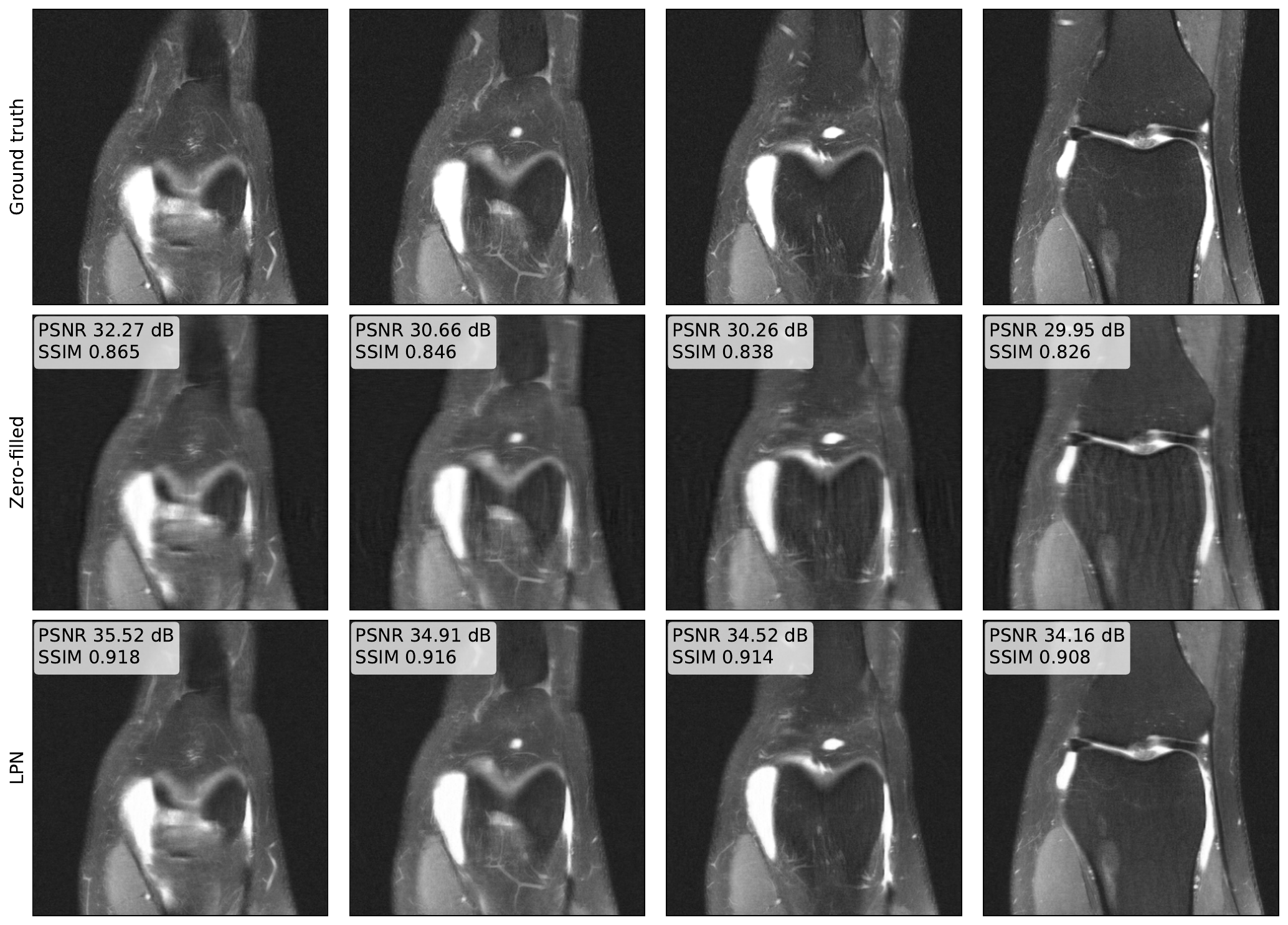}
    \caption{Standard LPN-PGD reconstruction after $100$ iterations with $R=2$ and $f_{\mathrm{center}}=0.1$.}
    \label{fig:mri_reconstruction_vanilla2}
    \centering
    \includegraphics[width=.85\linewidth]{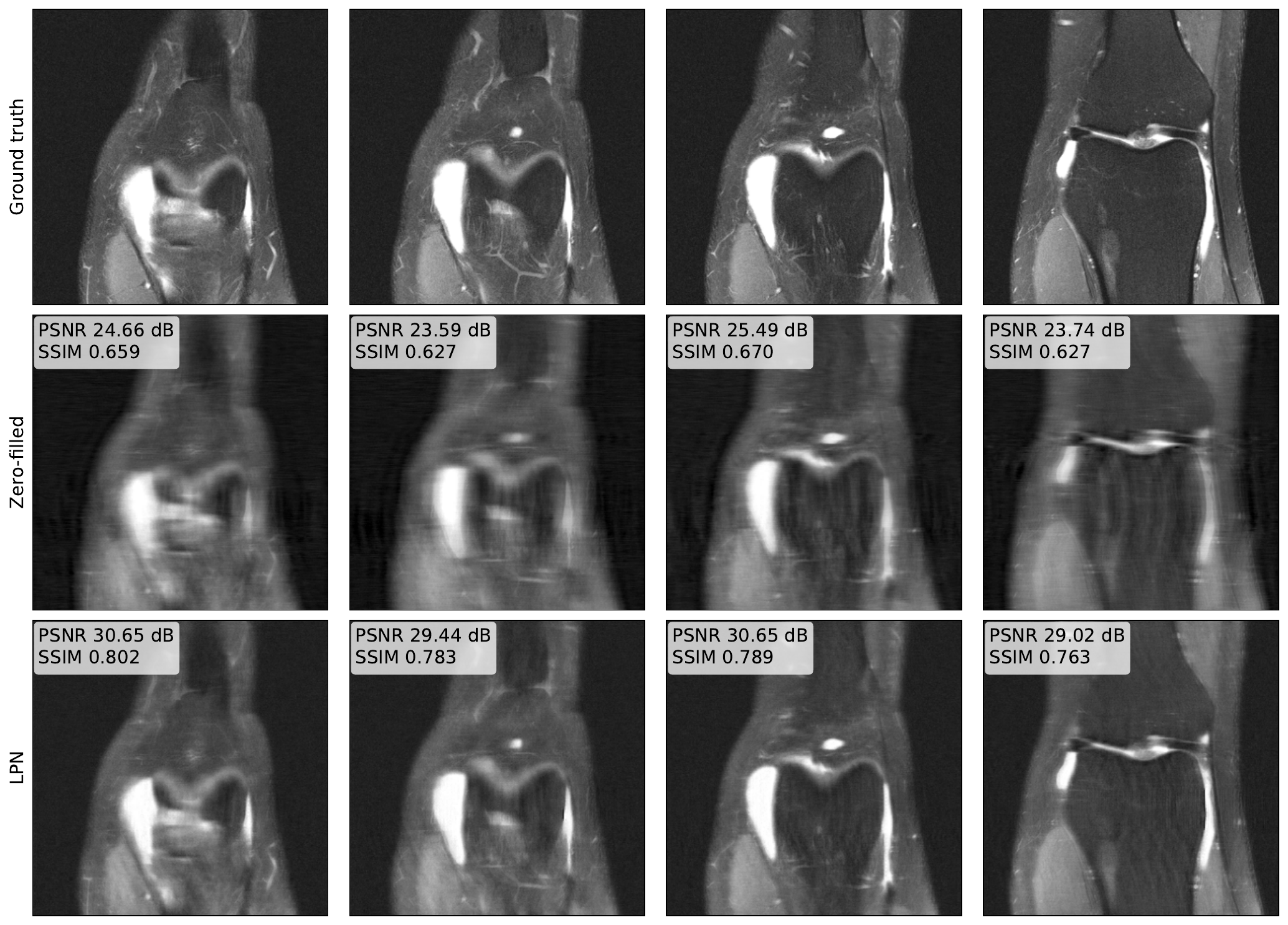}
    \caption{Standard LPN-PGD reconstruction after $100$ iterations with $R=4$ and $f_{\mathrm{center}}=0.05$.}
    \label{fig:mri_reconstruction_vanilla4}
\end{figure}
\begin{figure}[p!]
    \centering
    \includegraphics[width=.85\linewidth]{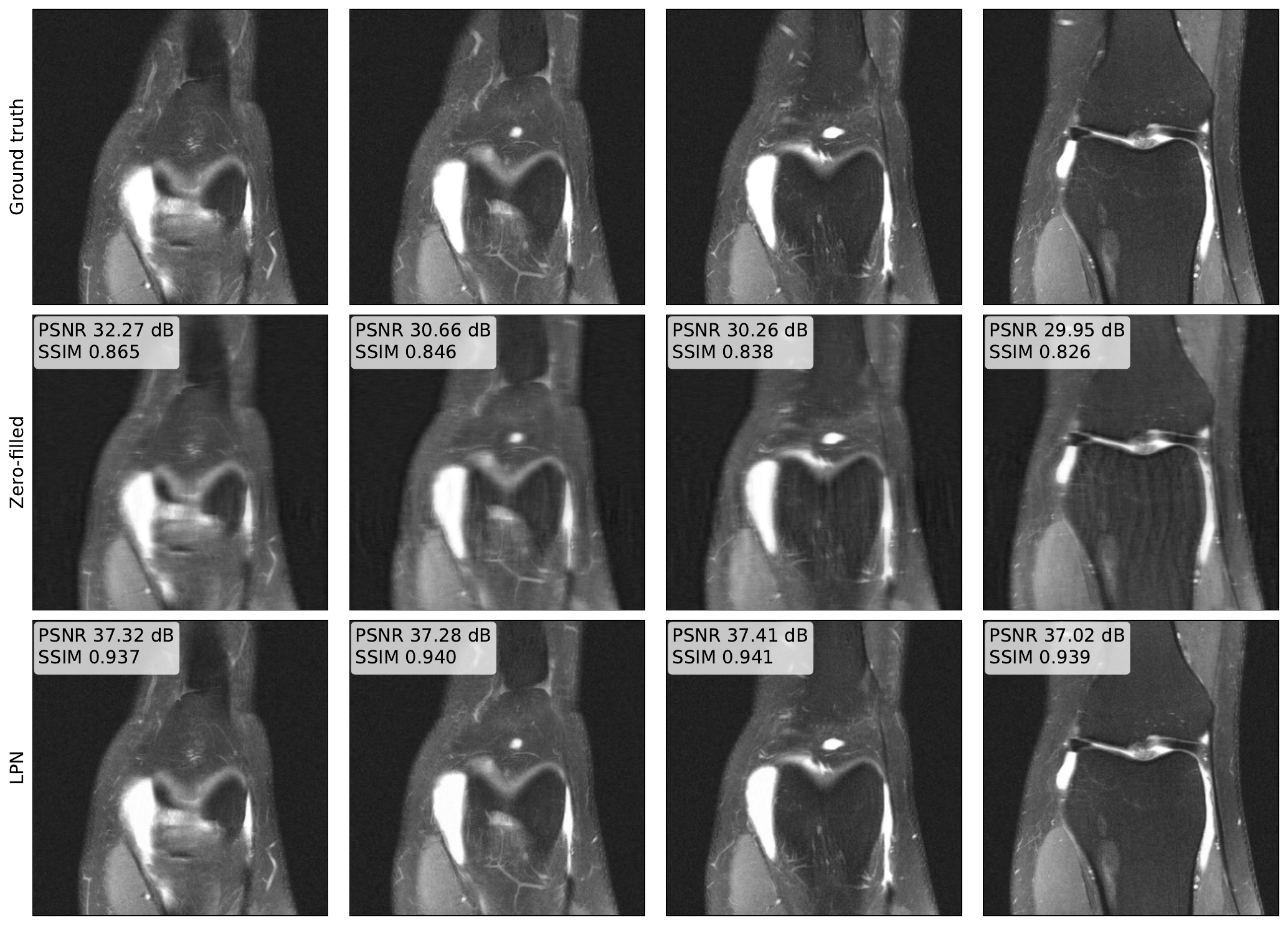}
    \caption{Averaged LPN-PGD reconstruction after $100$ iterations with $\gamma=0.3$, $R=2$, $f_{\mathrm{center}}=0.1$.}
    \label{fig:mri_reconstruction_3averaged2}
    \centering
    \includegraphics[width=.85\linewidth]{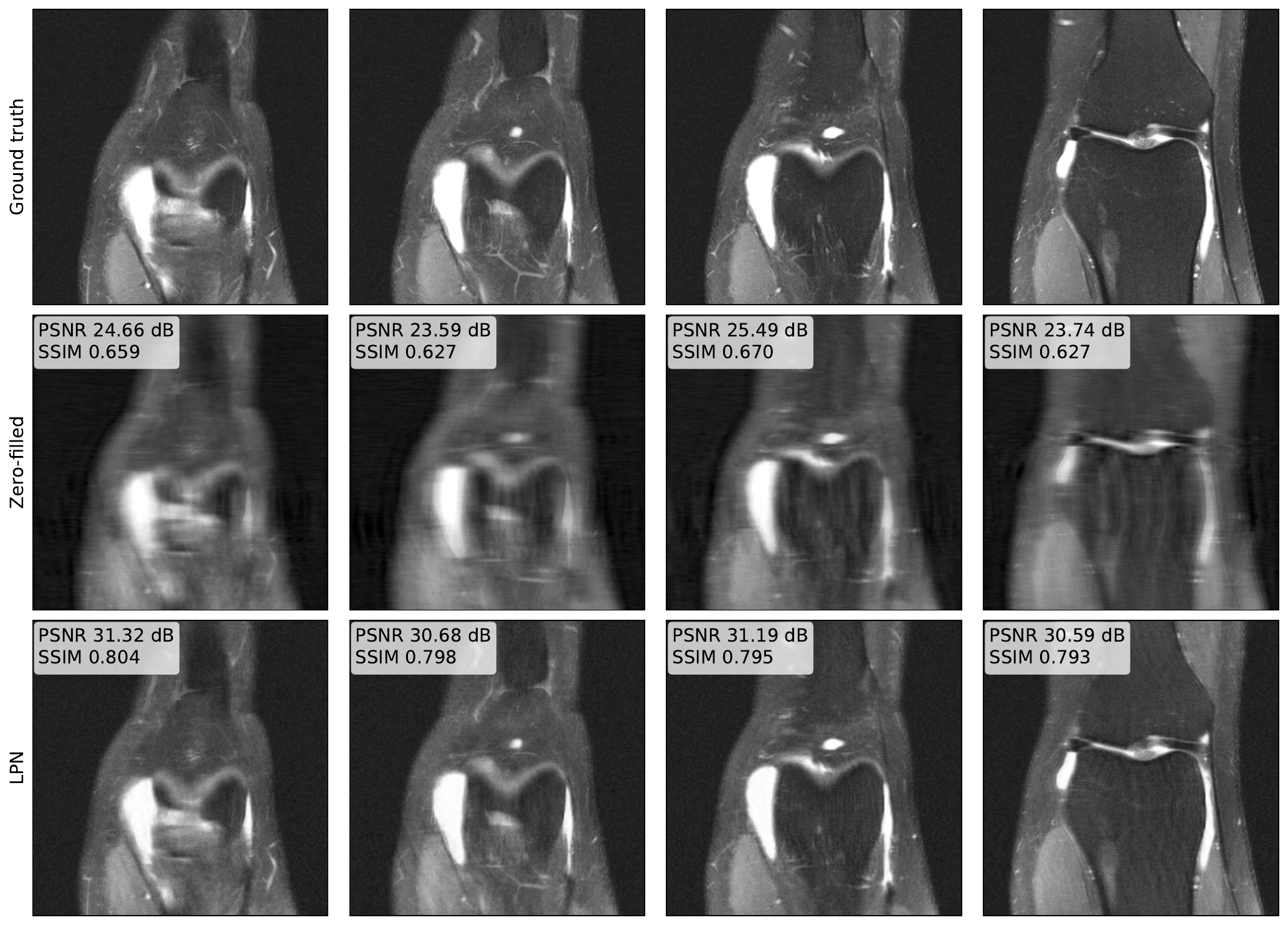}
    \caption{Averaged LPN-PGD reconstruction after $100$ iterations with $\gamma=0.3$, $R=4$, $f_{\mathrm{center}}=0.05$.}
    \label{fig:mri_reconstruction_3averaged4}
\end{figure}
\begin{figure}[p!]
    \centering
    \includegraphics[width=.85\linewidth]{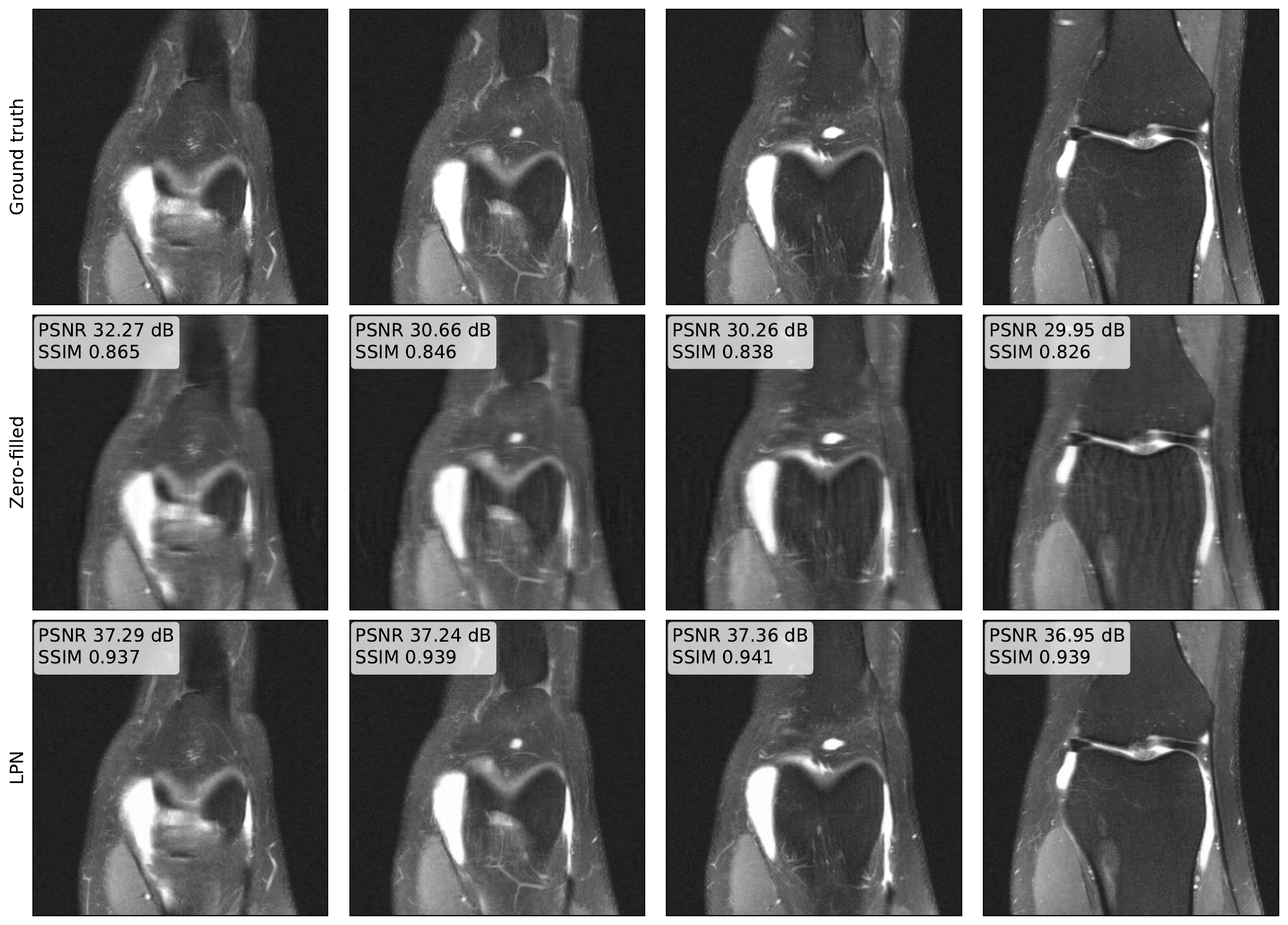}
    \caption{Scaled LPN-PGD reconstruction after $100$ iterations with $\gamma=0.3$, $R=2$, $f_{\mathrm{center}}=0.1$.}
    \label{fig:mri_reconstruction_3gamma2}
    \includegraphics[width=.85\linewidth]{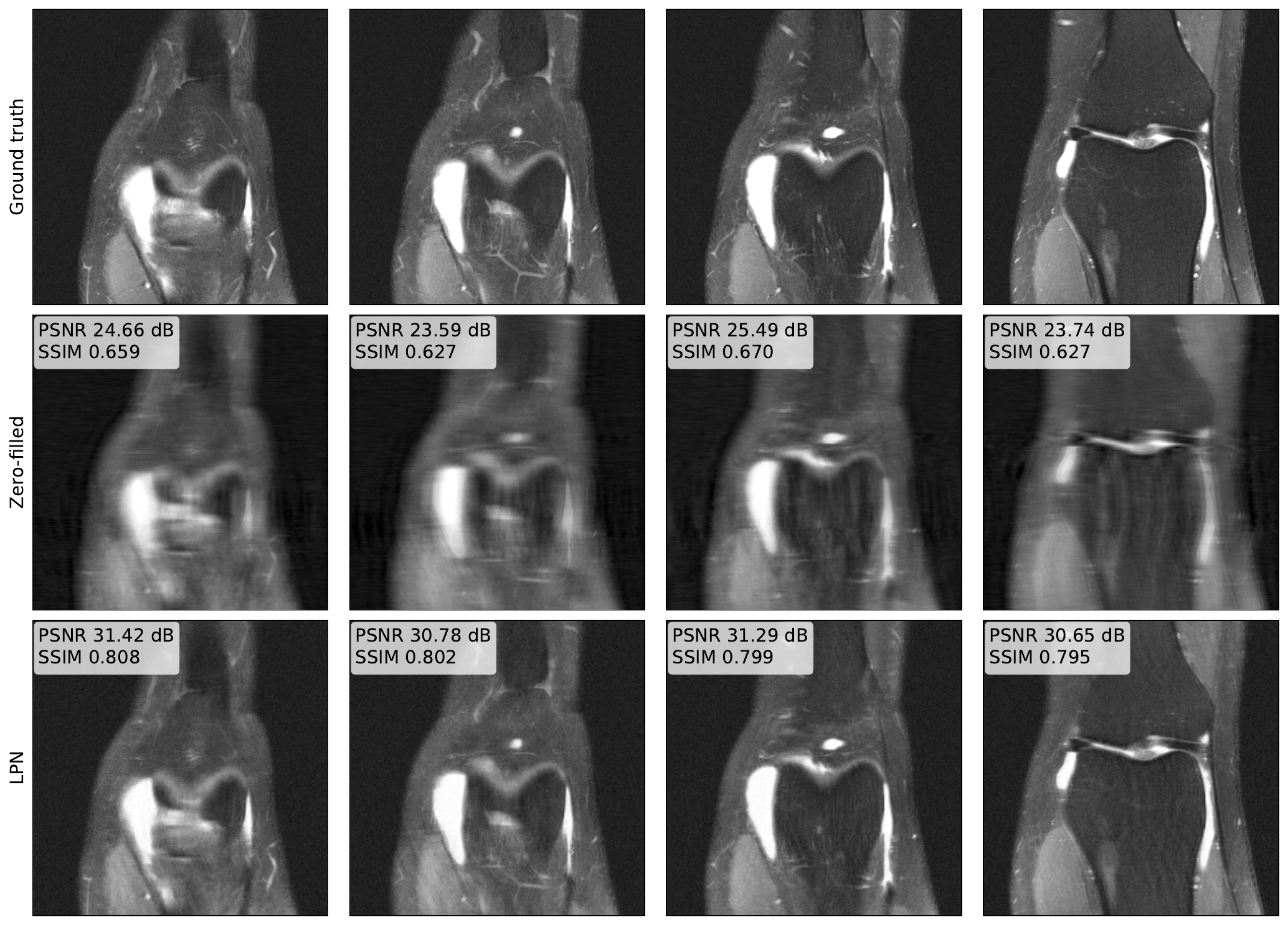}
    \caption{Scaled LPN-PGD reconstruction after $100$ iterations with $\gamma=0.3$, $R=4$, $f_{\mathrm{center}}=0.05$.}
    \label{fig:mri_reconstruction_3gamma4}
\end{figure}
\FloatBarrier

\subsection{Computed Tomography (CT)}
We next consider X-Ray computed tomography using the MayoCT dataset, where the forward operator $A$ is a parallel-beam Radon transform, implemented using the LEAP library~\cite{kim2023differentiable}. The measurement geometry is specified by $N_\theta$ projection angles and $N_d=400$ detector bins. We vary $N_\theta$ and the measurement-noise level to consider two regimes: low-dose CT, where the noise level is high (variance of $1$) but $N_\theta$ remains large ($180$ angles), and sparse-view CT, where the noise level is low (variance of $0.1$) but $N_\theta$ is substantially reduced ($60$ angles). In each case, we estimate the operator norm of $A$ using power iterations up to tolerance $10^{-6}$ for the purpose of normalizing the forward operator to operator norm of $1$. We display the original image, the filtered backprojection (FBP), and the reconstructed image. \cref{fig:ct_reconstruction_lowdose_vanilla,fig:ct_reconstruction_sparseview_vanilla} show the low-dose and sparse-view reconstructions obtained using standard LPN-PGD (\cref{alg:lpn_pgd_vanilla}), respectively. Figures~\cref{fig:ct_reconstruction_lowdose_averaged,fig:ct_reconstruction_sparseview_averaged} show the corresponding reconstructions using averaged LPN-PGD with $\gamma=0.15$. Finally, Figures~\cref{fig:ct_reconstruction_lowdose_gamma,fig:ct_reconstruction_sparseview_gamma} show the low-dose and sparse-view reconstructions obtained using inexact scaled LPN-PGD (\cref{alg:lpn_pgd_scaled}) with scaling parameter $\gamma=0.15$.

\begin{figure}[p!]
    \centering
    \includegraphics[width=.85\linewidth]{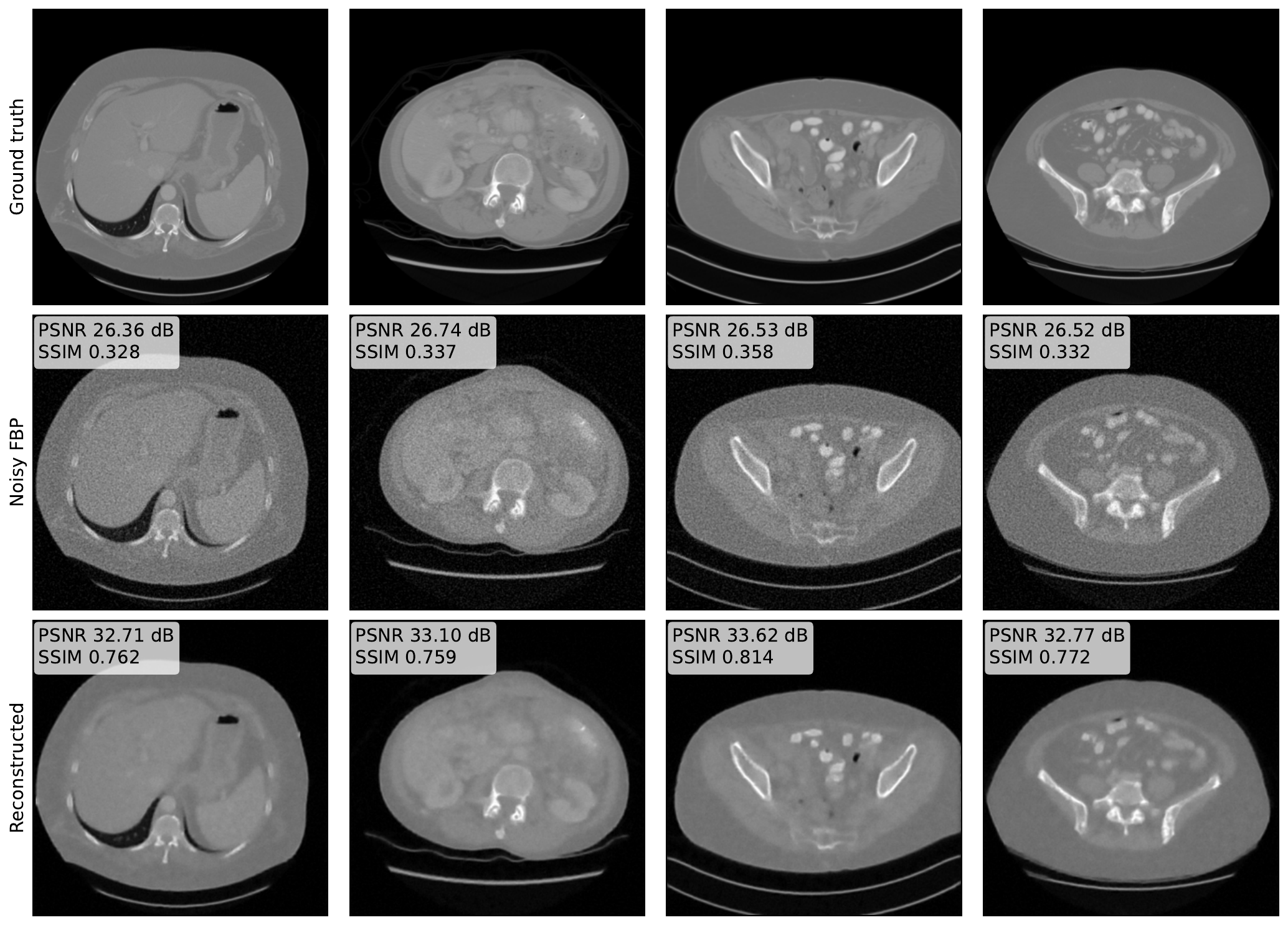}
    \caption{Low-dose CT using LPN-PGD for $10$ iterations with $N_\theta=180$.}
    \label{fig:ct_reconstruction_lowdose_vanilla}
    \centering
    \includegraphics[width=.85\linewidth]{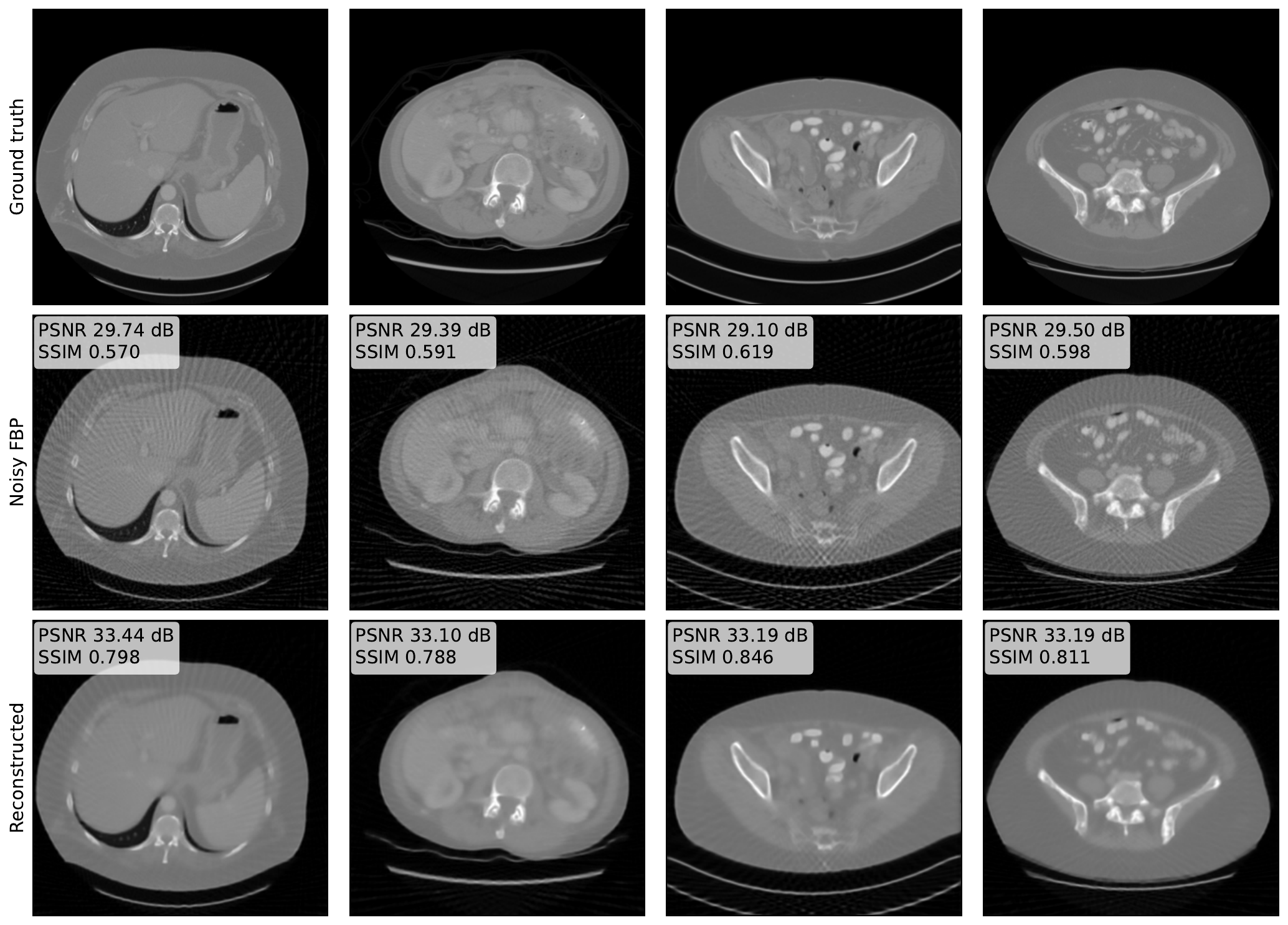}
    \caption{Sparse-view CT using LPN-PGD for $10$ iterations with $N_\theta=60$.}

    \label{fig:ct_reconstruction_sparseview_vanilla}
\end{figure}
\begin{figure}[p!]
    \centering
    \includegraphics[width=.85\linewidth]{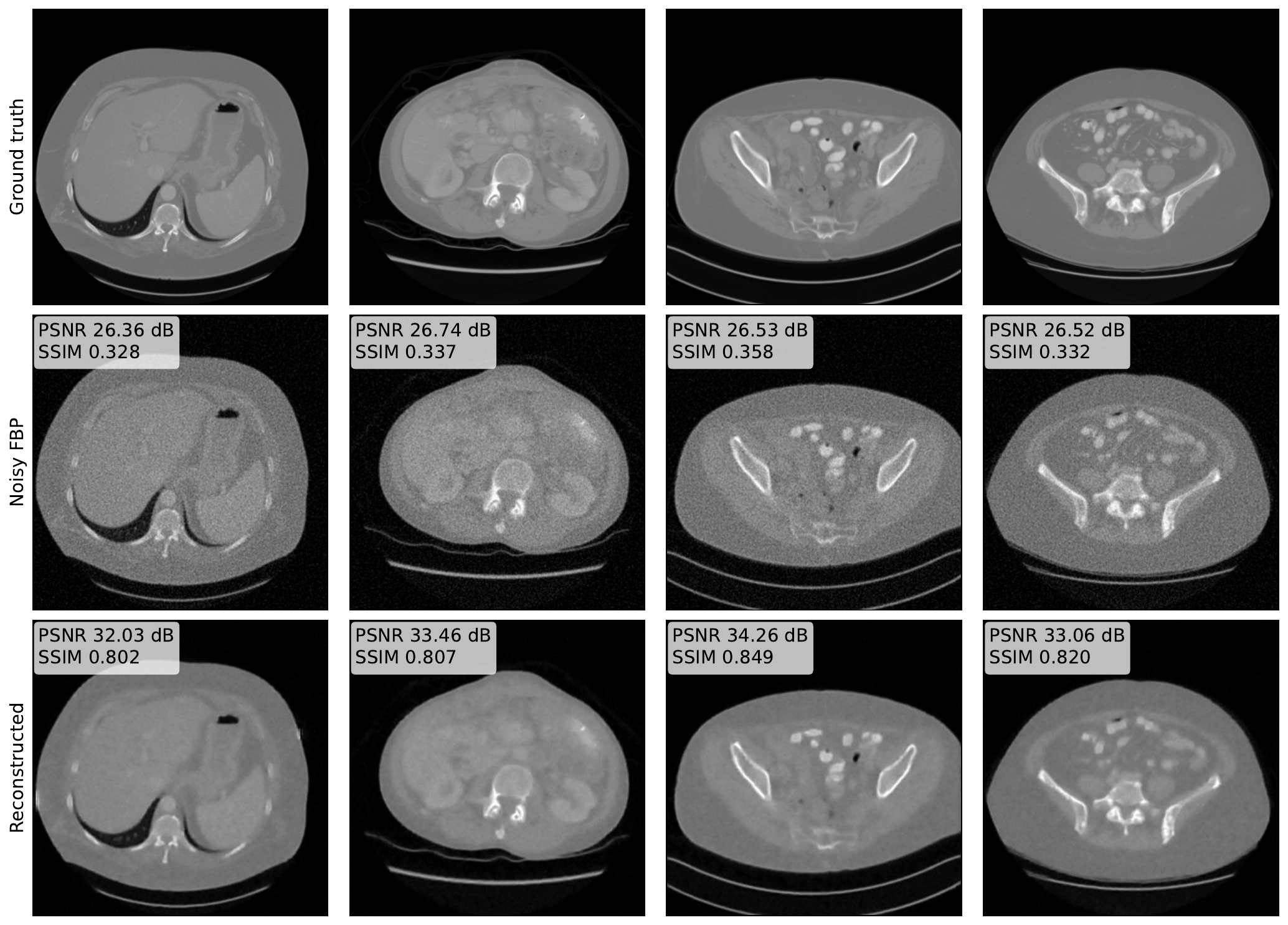}
    \caption{Low-dose CT using averaged LPN-PGD for $100$ iterations with $\gamma=0.15$ and $N_\theta=180$.}
    \label{fig:ct_reconstruction_lowdose_averaged}
    \centering
    \includegraphics[width=.85\linewidth]{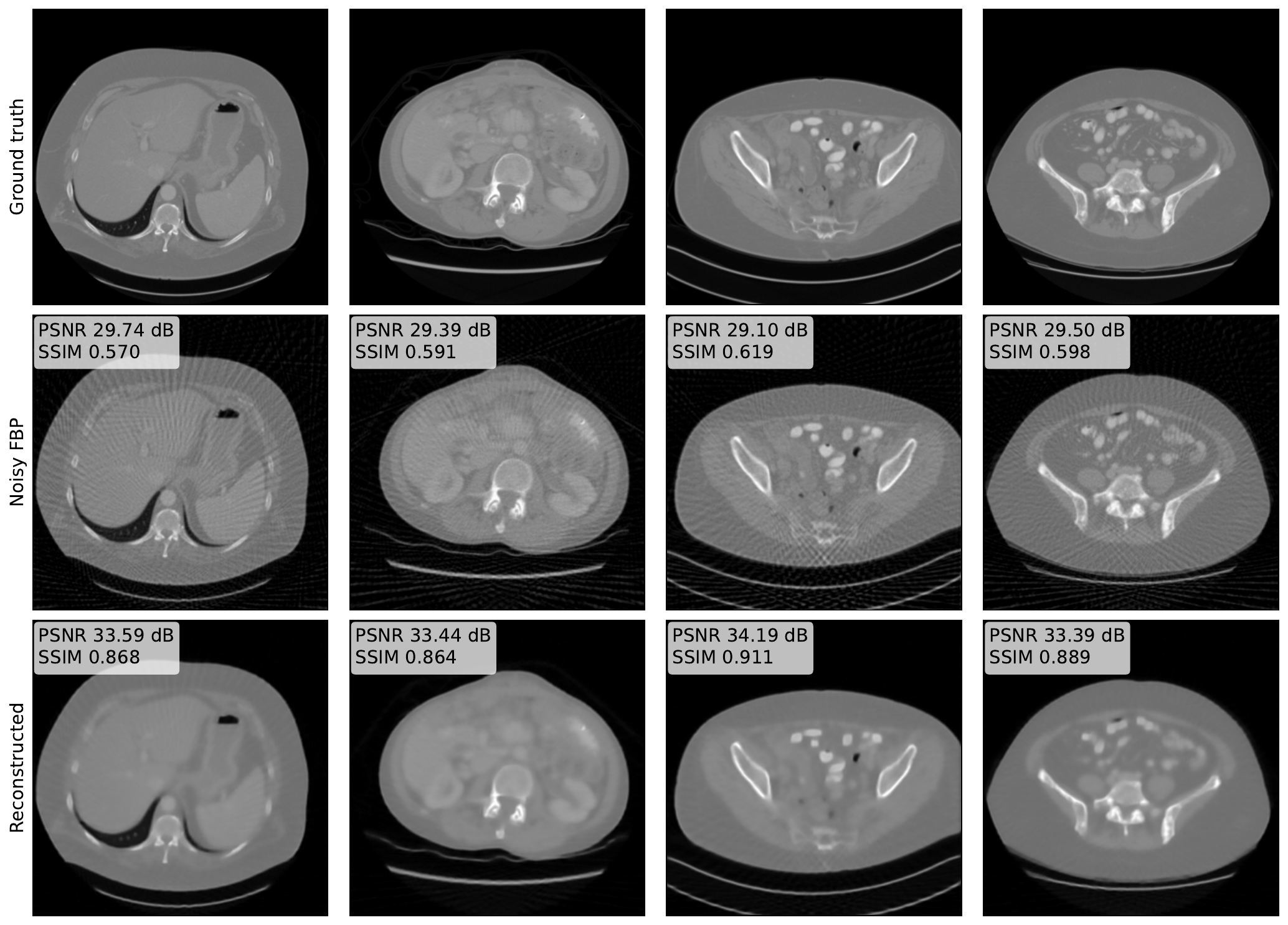}
    \caption{Sparse-view CT using averaged LPN-PGD for $100$ iterations with $\gamma=0.15$ and $N_\theta=60$.}
    \label{fig:ct_reconstruction_sparseview_averaged}
\end{figure}
\begin{figure}[p!]
    \centering
    \includegraphics[width=.85\linewidth]{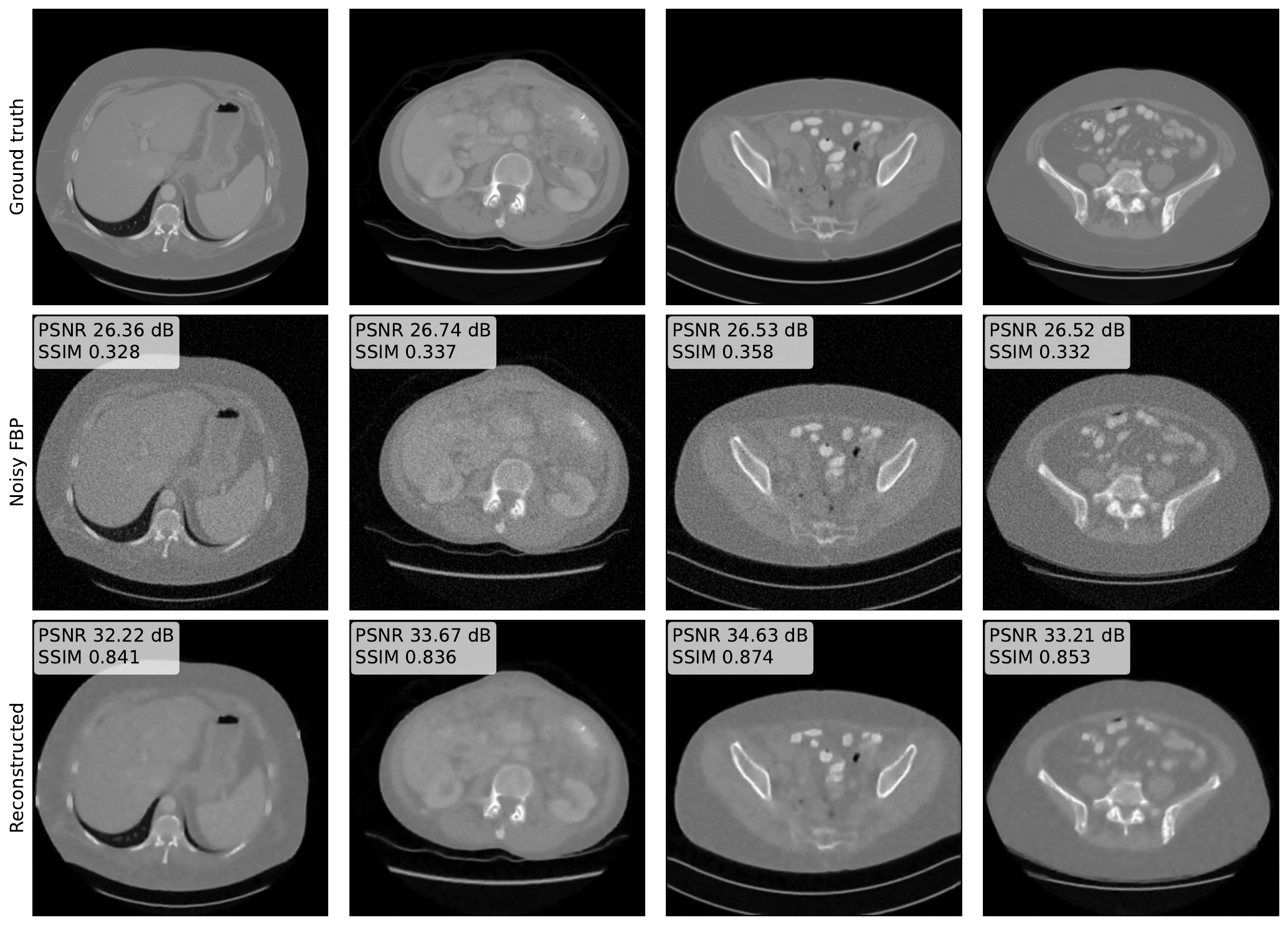}
    \caption{Low-dose CT using scaled LPN-PGD for $100$ iterations with $\gamma=0.15$ and $N_\theta=180$.}
    \label{fig:ct_reconstruction_lowdose_gamma}

    \includegraphics[width=.85\linewidth]{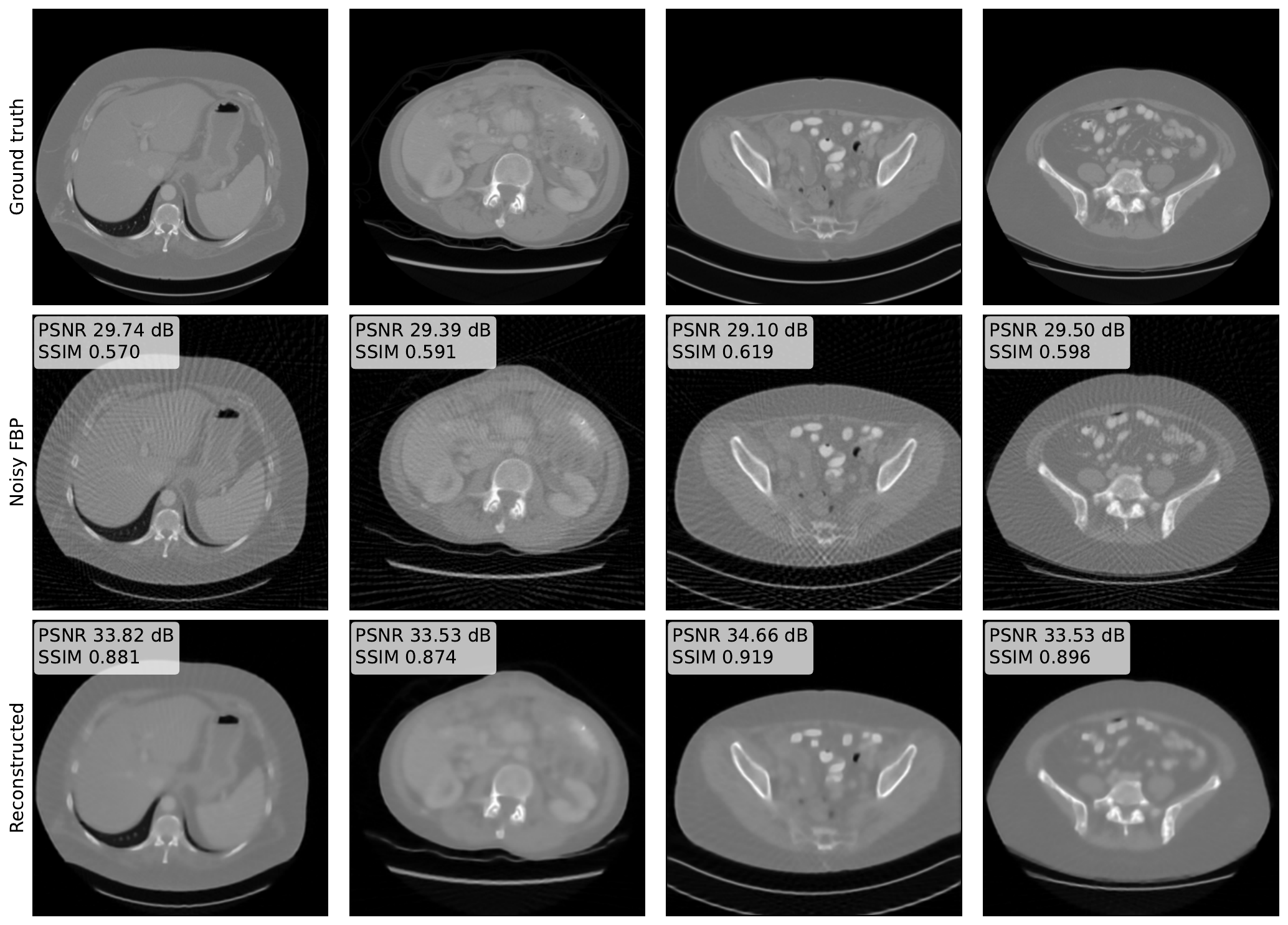}
    \caption{Sparse-view CT using scaled LPN-PGD for $100$ iterations with $\gamma=0.15$ and $N_\theta=60$.}
    \label{fig:ct_reconstruction_sparseview_gamma}
\end{figure}

\section{Conclusion and Future Work} \label{section:conclusion}
In this work, we revisited the Learned Proximal Network (LPN) framework, where a learned denoiser is guaranteed to be the proximal operator of a well-behaved regularizer by construction. We clarified the mathematical structure underlying LPNs, extended the framework to a broader class of activation functions, and established convergence guarantees for the resulting proximal algorithms. We also developed two mechanisms for weakening the strength of the learned prior. First, we showed that averaging an LPN with the identity preserves the LPN structure and corresponds to replacing the implicit regularizer by a scaled Moreau-smoothed version. Second, we developed a procedure for directly scaling the implicit regularizer by evaluating $\prox_{\gamma\reg}$ through a strongly convex auxiliary problem, and showed that this inner problem may be solved inexactly while retaining convergence guarantees. Finally, we demonstrated the framework on full-resolution accelerated MRI and CT reconstruction experiments, which achieved competitive reconstruction quality relative to state-of-the-art results. 

The experiments also exposed several questions not resolved by the present theory. In particular, the learned prior can remain excessively strong even after averaging or direct scaling. In the CT experiments, reconstruction quality eventually deteriorates after too many iterations, whereas we do not observe the same behavior in the MRI experiments. It is unclear whether this discrepancy is caused primarily by the learned prior, the denoising training procedure, the forward operator, or the interaction between these factors. Understanding this behavior is especially important because the convergence guarantees developed here ensure convergence to a critical point of the learned objective, but do not by themselves guarantee that this critical point corresponds to the best reconstruction.

Though the ICNN architecture provides a convenient way to guarantee convexity and obtain an exact proximal interpretation, it has been shown to be restrictive in its representation of convex functions~\cite{gagneux2025convexity}. This may limit the class of proximal operators that an LPN can learn. Investigating the expressivity of LPNs, as well as other possible parameterizations, is therefore an important direction for future work. More broadly, this work lays the groundwork for further study of LPNs as a promising class of learned reconstruction methods. A necessary first step is to establish that LPNs fit naturally within the classical framework of proximal methods and satisfy the regularity and convergence properties required for well-behaved optimization. With this foundation in place, questions of expressivity, training, and reconstruction performance can be studied more systematically.

\putbib[ref]
\end{bibunit}
\clearpage

\begin{center}
\color{header1}
{\bfseries\MakeUppercase{Supplementary Materials}:}
{\HLtitle
Plug-and-Play Methods Provably Converge Even with Improperly Trained Denoisers:
Convergence by Architectural Design
\par}

\vspace{0.075in}
\color{gray}\rule{\textwidth}{4pt}
\end{center}

\vspace{0.11in}

\setcounter{section}{0}
\setcounter{subsection}{0}
\setcounter{equation}{0}
\setcounter{figure}{0}
\setcounter{table}{0}
\setcounter{algorithm}{0}
\setcounter{theorem}{0}
\setcounter{footnote}{0}
\makeatletter
\def\siamprelabel{SM}
\renewcommand{\@biblabel}[1]{[SM#1]}

\renewcommand{\thesection}{SM\arabic{section}}
\renewcommand{\thesubsection}{\thesection.\arabic{subsection}}

\renewcommand{\theHsection}{SM.\arabic{section}}
\renewcommand{\theHsubsection}{SM.\arabic{section}.\arabic{subsection}}
\renewcommand{\theHequation}{SM.\arabic{equation}}
\renewcommand{\theHfigure}{SM.\arabic{figure}}
\renewcommand{\theHtable}{SM.\arabic{table}}
\renewcommand{\theHalgorithm}{SM.\arabic{algorithm}}
\renewcommand{\theHtheorem}{SM.\arabic{theorem}}

\begin{bibunit}[siamplain]

\section{The KL Property of LPN Objectives} 
\label{section:KL_objective}
The convergence results presented in~\cref{section:convergence_analysis} are greatly enhanced when the objective is KL (see~\cref{def:KL}). In this section, we prove~\cref{prop:lpn_objective_KL}. Informally, this result states that when the activation of the LPN and data fidelity term in question are constructed from common elementary and analytic functions using finitely many standard operations, the resulting objective is KL. We defer the proof to the end of this section, and first develop the machinery needed to establish the result. In particular, we introduce the notion of o-minimal structures and definable functions~\cite{tarski1998decision, seidenberg1954new, gabrielov1968projections, van1984remarks, pillay1986definable, knight1986definable, denef1988p, van1994elementary, wilkie1996model, van1996geometric}. 
\begin{definition}[o-minimal structure]
Let $\mathcal{M} \coloneqq \bigcup_{n\in\mathbb{N}} \mathcal{M}_n,$ where each $\mathcal{M}_n$ is a family of subsets of $\mathbb{R}^n$.
We say that $\mathcal{M}$ is an \emph{o-minimal structure} on
$(\mathbb{R},+,\cdot)$ if the following conditions hold:
\begin{enumerate}[
    label=\normalfont(\roman*),
]
    \item Each $\mathcal{M}_n$ is closed under finite set-theoretic
    operations.

    \item If $A\in\mathcal{M}_n$ and $B\in\mathcal{M}_m$, then
    $A\times B\in\mathcal{M}_{n+m}$.

    \item If $A\in\mathcal{M}_{n+m}$ and
    $\pi:\mathbb{R}^{n+m}\to\mathbb{R}^n$ is the projection onto the
    first $n$ coordinates, then $\pi(A)\in\mathcal{M}_n$.

    \item If $f,g_1,\ldots,g_k\in\mathbb{Q}[X_1,\ldots,X_n]$, then
    \[
    \left\{
    x\in\mathbb{R}^n:
    f(x)=0,\;
    g_1(x)>0,\ldots,g_k(x)>0
    \right\}
    \in\mathcal{M}_n.
    \]
    \item The elements of $\mathcal{M}_1$ are precisely the finite unions
    of points and open intervals.
\end{enumerate}
We say a function $f:\R^n \to \R$ is $\mathcal M$-definable if its graph, which we denote
$\graph f$, is in $\mathcal M$.
\end{definition}

See~\cite{kurdyka1998gradients} or \cite{ji2020directional} for basic properties and examples of o-minimal structures, as well as definable functions. In particular, sums, products, inverses, maxima, minima, and compositions of $\mathcal M$-definable functions are again $\mathcal M$-definable. We next show that $C^1$ functions definable in an o-minimal structure are KL. 

\begin{lemma}[Definable $C^1$ functions are KL]\label{lem:definable_KL}
Let $\mathcal M$ be an o-minimal structure, and let
$f:\R^n\to\R$ be continuously differentiable and $\mathcal{M}$-definable. Then $f$ is a KL function.
\end{lemma}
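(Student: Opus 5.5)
The plan is to reduce this to the Kurdyka gradient inequality for definable functions (Kurdyka 1998, extended to nonsmooth definable functions by Bolte, Daniilidis, Lewis and Shiota) and to check that the desingularizing function it produces meets every requirement of \cref{def:KL}. Fix $x^*\in\R^n$. There are two cases.

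\textbf{Non-critical points.} If $\nabla f(x^*)\neq 0$, continuity of $\nabla f$ gives a neighborhood $U$ of $x^*$ and $c>0$ with $\|\nabla f(x)\|\geq c$ on $U$. Take $\varphi(s)=s/c$ and any $\eta>0$. Then $\varphi$ is continuous, concave (indeed linear), $C^1$, and satisfies $\varphi(0)=0$ and $\varphi'>0$. The inequality \cref{eq:kl_inequality} holds trivially on $U$, so this case needs no definability at all.

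\textbf{Critical points.} If $\nabla f(x^*)=0$, we invoke the Kurdyka--\L ojasiewicz inequality for definable functions. For $f$ definable in an o-minimal structure $\mathcal M$, $C^1$, with a critical point $x^*$, it supplies $\eta>0$, a neighborhood $U$ of $x^*$, and a continuous function $\varphi:[0,\eta)\to\R_+$ with the following properties:
\begin{itemize}
\item $\varphi$ is $\mathcal M$-definable;
\item $\varphi(0)=0$;
\item $\varphi$ is $C^1$ on $(0,\eta)$ with $\varphi'>0$;
\item $\|\nabla(\varphi\circ(f-f(x^*)))(x)\|\geq 1$ on $U\cap\{f(x^*)<f<f(x^*)+\eta\}$.
\end{itemize}
By the chain rule, as in \cref{eq:KL_reparam}, the last property is exactly \cref{eq:kl_inequality}. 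This result is usually stated with the asymptotic critical value set and a uniform version; we only need the local form.

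The main obstacle is \textbf{concavity}, which \cref{def:KL} requires but Kurdyka's theorem does not directly state. I would handle it with o-minimality. Since $\varphi'$ is definable on $(0,\eta)$, the monotonicity theorem says that after shrinking $\eta$, $\varphi'$ is monotone (and $C^1$) on $(0,\eta)$. If $\varphi'$ is nonincreasing, $\varphi$ is already concave. If $\varphi'$ is nondecreasing near $0$, then $\varphi'$ is bounded above on $(0,\eta/2]$ by $M=\varphi'(\eta/2)$. Hence $\|\nabla f\|\geq 1/M$ on the relevant set, and we can replace $\varphi$ by the linear function $s\mapsto Ms$, which is concave and still satisfies \cref{eq:kl_inequality} with $\eta$ replaced by $\eta/2$.

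Alternatively, one can simply cite \cite{attouch2013convergence} or Bolte--Daniilidis--Lewis--Shiota, which state the KL property for definable functions with a concave desingularizer of the form $\varphi(s)=cs^{1-\theta}$. In that case the concavity step is only a remark. Combining the two cases shows $f$ is KL at every point, hence a KL function.
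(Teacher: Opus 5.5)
Your proof is correct and takes essentially the paper's route: Kurdyka's gradient inequality, then the o-minimal monotonicity theorem applied to the desingularizer's derivative, keeping $\varphi$ when $\varphi'$ is nonincreasing and otherwise replacing it by the linear function with slope $\varphi'(\eta/2)$ on $[0,\eta/2)$ (your separate treatment of non-critical points is harmless, though the paper's argument needs no such split). One correction to your closing aside: the power form $cs^{1-\theta}$ holds in polynomially bounded structures such as the semialgebraic or globally subanalytic ones, but not in general o-minimal structures, and it fails in $\mathbb{R}_{\mathrm{an},\exp}$, the structure the paper actually uses (for $f(x)=e^{-1/x^2}$ with $f(0)=0$ one has $f(x)^{-\theta}|f'(x)|\to0$ as $x\to0$ for every $\theta<1$), so that version of the citation cannot replace your monotonicity argument.
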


\begin{proof}
Fix $x^*\in\R^n$ and choose $r>0$. Let $U \coloneqq B(x^*,r)$ and define
\begin{equation}
    V
    \coloneqq
    U\cap\left\{x\in\R^n:f(x)>f(x^*)\right\}.
\end{equation}
Since $f$ is continuous and $\mathcal M$-definable, $V$ is an open, bounded, $\mathcal{M}$-definable subset of $\R^n$.

If $V$ is empty, then $f$ trivially has the KL property at $x^*$. Therefore, suppose that $V$ is not empty, and define $h:V\to\R$ and $h(x)\coloneqq f(x)-f(x^*)$.
The function $h$ is $C^1$ and definable, and positive on $V$. 

By Kurdyka's Inequality (\cite[Theorem 1]{kurdyka1998gradients}), there exist constants $c>0$ and $\rho>0$, and a strictly increasing positive $\mathcal M$-definable $C^1$ function $\Psi:[0,\infty)\to\R$ such that
\begin{equation}
    \left\|\nabla(\Psi\circ h)(x)\right\|
    \geq c
\end{equation}
for every $x\in V$ where $0<h(x)<\rho$. Therefore, by the chain rule,
\begin{equation}\label{eq:kurdyka_intermediate}
    \Psi'\bigl(f(x)-f(x^*)\bigr)\|\nabla f(x)\|
    \geq c
\end{equation}
whenever $x\in U$ and $f(x^*)<f(x)<f(x^*)+\rho.$
Because $\Psi$ is $\mathcal M$-definable, its derivative $\Psi'$ is $\mathcal M$-definable by~\cite[Lemma~3]{kurdyka1998gradients}. Applying the o-minimal
monotonicity theorem~\cite[Lemma~2]{kurdyka1998gradients} to $\Psi'$, and shrinking $\rho$
if necessary, there exists $\bar{\eta}\in(0,\rho)$ such that
$\Psi'$ is monotone on $(0,\bar{\eta})$. $\Psi$ is strictly increasing and $C^1$, so $\Psi'(s) \geq 0$ for all $s>0$. Moreover, $\Psi'(s) > 0$ for all $s \in (0,\bar{\eta})$. Indeed, if $\Psi'(s_0) = 0$ for some $s_0 \in (0,\bar{\eta})$, then $\Psi'$ vanishes either on $(0,s_0]$ or on $[s_0,\bar \eta)$, a clear contradiction. 

Suppose that $\Psi'$ is nonincreasing. Set $\eta\coloneqq\bar{\eta}$, $\varphi(s)\coloneqq
\tfrac{\Psi(s)-\Psi(0)}{c}$ for $ s\in[0,\eta).$ Dividing \cref{eq:kurdyka_intermediate} by $c$ gives $\varphi'\bigl(f(x)-f(x^*)\bigr)\|\nabla f(x)\| \geq 1,$ and $\varphi$ clearly satisfies \labelcref{def:KL_i,def:KL_ii,def:KL_iii,def:KL_iv} of~\cref{def:KL}.

Now suppose that $\Psi'$ is nondecreasing. Set $\eta\coloneqq\bar{\eta}/2$, $M\coloneqq \Psi'(\eta)>0$,
and define $ \varphi(s) \coloneqq \tfrac{M}{c}s$, for $ s\in[0,\eta)$. $\varphi$ is linear and therefore concave. Since $\Psi'$ is nondecreasing, $\Psi'(s)\leq M$ for every $s\in (0,\eta)$. Therefore, whenever $x\in U$ and $f(x^*)<f(x)<f(x^*)+\eta$,
we have
\begin{align}
    \varphi'\bigl(f(x)-f(x^*)\bigr)\|\nabla f(x)\| = 
    \frac{M}{c}\|\nabla f(x)\|  \geq
    \frac{\Psi'\bigl(f(x)-f(x^*)\bigr)}{c}
    \|\nabla f(x)\| \geq 1.
\end{align}
Conditions (i) through (iii) of~\cref{def:KL} are straightforward to verify.

Thus, in either case, $f$ satisfies the KL property at $x^*$. Since $x^*\in\R^n$ was arbitrary, $f$ is a KL function.
\end{proof}

\begin{lemma}[Definability of the implicit LPN regularizer]\label{lem:definability}
Let $\mathcal M$ be an o-minimal structure that includes polynomials, and suppose that $\lpn$ is an LPN, with potential $\potential$, implicit regularizer $\reg$, and activation $\sigma$. If $\sigma$ is $\mathcal M$-definable, then $\reg$ is $\mathcal M$-definable.
\end{lemma}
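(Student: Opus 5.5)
The plan is to build up definability layer by layer through the network, then pass through the convex conjugate using the first-order formula characterization of definable sets (closure under projection/complement, i.e.\ quantifier elimination at the level of sets). Finally, subtract a polynomial.

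\textbf{Step 1: definability of $\psi_\theta$ and $\potential$.} Each hidden state satisfies $z_{i+1}(x)=\sigma(W_iz_i(x)+H_ix+b_i)$. The map $x\mapsto W_iz_i(x)+H_ix+b_i$ is an affine combination of definable functions, hence definable, since $\mathcal M$ contains polynomials and definable functions are closed under sums and products. Applying $\sigma$ coordinatewise is a composition with a definable function. By induction on $i$, every coordinate of $z_K=\psi_\theta$ is $\mathcal M$-definable. Adding the polynomial $\tfrac{\alpha}{2}\|x\|^2$ shows that $\potential$ is definable.

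One caveat is that the weights and biases are arbitrary reals, while the definition only guarantees sets cut out by rational polynomials. I would handle this by noting that constants (singletons $\{c\}$) are definable, since $\mathcal M_1$ contains points. Real coefficients then enter by taking a product with $\{c\}$ and projecting. Since the statement says $\mathcal M$ ``includes polynomials,'' I would simply invoke that hypothesis for real-coefficient polynomials.

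\textbf{Step 2: definability of $\potential^*$.} Since $\potential$ is $\alpha$-strongly convex and continuous, $\potential^*(y)=\sup_x\{\langle x,y\rangle-\potential(x)\}$ is finite for every $y$. Its graph is
\[
\{(y,t): \forall x,\ \langle x,y\rangle-\potential(x)\le t,\ \text{and}\ \forall \epsilon>0\ \exists x,\ \langle x,y\rangle-\potential(x)>t-\epsilon\}.
\]
Each inner set is definable, because it is built from the graph of $\potential$ and polynomial relations. Universal quantifiers are complements of projections of complements, so by axioms (i)--(iii) of the o-minimal structure the graph is definable.

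A cleaner alternative avoids the $\epsilon$-quantifier. The supremum is attained at the unique $x$ with $y\in\partial\potential(x)$, so $\potential^*(y)=\langle x,y\rangle-\potential(x)$ for that maximizer. The maximizer condition is itself first order: $\forall x'$, $\langle x',y\rangle-\potential(x')\le\langle x,y\rangle-\potential(x)$. The graph is then the projection of the set of $(y,t,x)$ satisfying this condition together with $t=\langle x,y\rangle-\potential(x)$. I would use this version.

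\textbf{Step 3: definability of $\reg$.} Since $\reg=\potential^*-\tfrac12\|\cdot\|^2$ is a difference of definable functions, it is definable.

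\textbf{Main obstacle.} I expect the main obstacle to be Step 2, specifically justifying that first-order formulas with quantifiers over $\R^n$ define sets in $\mathcal M$. I would spell this out once as a short lemma: $\mathcal M$ is closed under complement and projection, and $\forall=\neg\exists\neg$.
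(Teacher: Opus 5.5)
Your proposal is correct and follows essentially the same route as the paper: you first get definability of the potential by composing through the ICNN layers, then definability of its conjugate via the attained-maximum characterization of the graph, and finally subtract the quadratic (your projection of the maximizer set is equivalent to the paper's $\pi(A)\cap\pi(B)^c$). Your remark that real-valued weights and biases can be brought in through definable singletons and projection is a detail the paper leaves implicit, and you handle it correctly.
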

\begin{proof}
Since the activation is $\mathcal M$-definable, and definability is preserved through sums, products, and composition, $\potential$ is $\mathcal M$-definable. Recall that $\potential$ is strongly convex. That means for each $y$, $x \mapsto \potential(x) - \langle x,y\rangle$ is strongly convex and coercive, so it attains a unique minimum. Equivalently, $\potential^*(y) = \max_{x\in \R^n}\left \{\langle x,y\rangle - \potential(x) \right\}.$
We can therefore characterize its graph directly. $(y,t) \in \graph \potential^*$ if and only if there exists $x \in \R^n$ such that $t=\langle x,y\rangle - \potential(x)$ and for all $z\in \R^n$, we have $\langle z,y\rangle -\potential(z)\leq t$. Define \begin{equation}
    A=\{(y,t,x)\mid t=\langle x,y\rangle - \potential(x)\}, \quad B = \{(y,t,z) \mid \langle y,z\rangle -\potential(z) > t\}.
\end{equation}
Both are definable. Let $\pi$ denote projection onto the $(y,t)$-coordinates, an operation that preserves $\mathcal M$-definability. Finally, since $\mathcal{M}$-definable sets are closed under complements and finite intersections, 
\begin{align}
\operatorname{graph}\potential^*
&=
\left\{
(y,t):
\exists x,\;
t=\langle x,y\rangle-\potential(x)
\right\}
\cap
\left(
\left\{
(y,t):
\exists z,\;
\langle z,y\rangle-\potential(z)>t
\right\}
\right)^c
\\
&= \pi(A) \cap \pi(B)^c,
\end{align}
making $\potential^*$ $\mathcal{M}$-definable. Definability of $\reg$ follows since polynomials are $\mathcal M$-definable.
\end{proof}

\begin{proof}[Proof of~\cref{prop:lpn_objective_KL}]
Let $\Ranexp$ denote the expansion of the real field, $(\R,+,\cdot,<,0,1)$, by all restricted real-analytic functions together with the exponential function. $\Ranexp$ is o-minimal by~\cite[Corollary 5.13]{van1994elementary}. Further, polynomials are definable in any expansion of the real field. Also, $\graph \log =\left\{(x,y)\in(0,\infty)\times\R:(y,x)\in\graph \exp\right\}$,
so $\log$ is definable. Note that the absolute value can be written as $|x| = \max\{x,-x\}$. Since maxima, minima, sums, products, and compositions preserve $\Ranexp$-definability,\footnote{See ~\cite[B.1]{ji2020directional}, for example.} every activation $\sigma$ and data-fidelity term $\fid$ satisfying the assumptions of~\cref{prop:lpn_objective_KL} is $\Ranexp$-definable. By~\cref{lem:definability}, $\reg$ is definable. Therefore, for any $\eta>0,\gamma>0$, $F = \eta \fid + \gamma \reg$ is definable. Observe that $\reg$ is $C^1$ by~\cref{lem:C_1_regularizer}, so $F$ is KL by~\cref{lem:definable_KL}. Next, for the augmented objective, $t\mapsto \frac{1}{2}t^2$ is definable. Hence, $\xi$ is definable, and therefore KL by the same result. Since  $\lpn^{(\lambda)}$ is an LPN by~\cref{prop:closure_of_lpns}, its implicit regularizer, which is $\lambda M_{1-\lambda}\reg$ by~\cref{cor:averaged_lpn}, is $\Ranexp$-definable. $F^{(\lambda)}$ is therefore KL. 
\end{proof}


\section{Other Optimization Methods}\label{section:other_methods}
In this section, we discuss the proximal point method (PPM) and the alternating direction method of multipliers (ADMM). We begin with PPM, which, for an objective $\varphi:\R^n\to\R$, repeatedly applies $\prox_\varphi$ to the iterates. In the LPN setting,  this algorithm amounts to repeatedly applying the LPN. PPM is already a special case of the LPN-PGD framework developed in~\cref{section:convergence_analysis}. Taking the data-fidelity term to be identically zero, $\fid$ is $0$-smooth and $\nabla\fid\equiv0$, so the LPN-PGD iteration in~\cref{alg:lpn_pgd_vanilla} reduces to
\begin{equation}
    x_{k+1}
    = \lpn \left(x_k-\eta\nabla\fid(x_k)\right)
    = \lpn(x_k)
    = \prox_{\reg}(x_k)
\end{equation}
Consequently, the convergence results of~\cref{prop:pgd_convergence} apply directly to PPM. In particular, since $\nabla\fid\equiv0$, there is no step-size parameter or corresponding smoothness restriction.

We next consider ADMM using a proximal operator, described in \cref{alg:lpn_admm}. LPN-ADMM has the same requirements as LPN-PGD, with the additional assumption that $\fid$ is convex. Thus, LPN-ADMM applies to a strictly narrower class of problems, while providing the same guarantees obtained for LPN-PGD. Moreover, each ADMM iteration requires evaluating $\prox_{\eta\fid}$, whereas LPN-PGD requires only the gradient $\nabla\fid$, making ADMM potentially more complicated to implement. For these reasons, we do not present extensions of LPN-ADMM analogous to the averaged and scaled LPN variants developed for LPN-PGD.
\begin{algorithm}[H]
\caption{LPN-ADMM}
\label{alg:lpn_admm}
\begin{algorithmic}[1]
\REQUIRE Initial point $x_0\in\R^n$, $L_f$-smooth, convex, and bounded below $\fid:\R^n\to\R$, step size $0<\eta<1/L_f$, LPN $\lpn$ as in~\cref{def:LPN} with potential $\potential$, and number of steps $K$.
\STATE $u_0\coloneqq -\eta\nabla\fid(x_0)$
\STATE $v_0\coloneqq \lpn(x_0+u_0)$
\FOR{$k=0,1,\dots,K-1$}
    \STATE $x_{k+1}\coloneqq \prox_{\eta\fid}(v_k-u_k)$
    \STATE $u_{k+1}\coloneqq u_k+x_{k+1}-v_k$
    \STATE $v_{k+1}\coloneqq \lpn(x_{k+1}+u_{k+1})$
\ENDFOR
\RETURN $x_K$
\end{algorithmic}
\end{algorithm}
\begin{proposition}[Convergence of LPN-ADMM]\label{prop:admm_convergence}
Consider $\{x_k\}_{k \geq 0}$, the iterates of standard LPN-ADMM in~\cref{alg:lpn_admm}, where $\reg$ denotes the regularizer associated with $\lpn$. Letting $F \coloneqq \eta \fid + \reg$ and \begin{equation}
    \Loss(x,v,u) \coloneqq \eta \fid(x) + \reg(v) + \langle u, x-v\rangle + \frac{1}{2}\|x-v\|^2, \quad \Loss_k \coloneqq \Loss(x_k,v_k,u_k),
\end{equation}
the following hold:
\begin{enumerate}[
    label=\normalfont(\roman*),
    ref=\thetheorem(\roman*)]
\crefalias{enumi}{proposition}
    \item
    $\Loss_k$ is non-increasing and convergent.

    \item 
    $\sum_{k=0}^{+\infty}
    (\|x_k-v_k\|^2+\|x_{k+1}-x_k\|^2)<+\infty$.

    \item
    If $\Loss$ is KL, then
    $\sum_{k=1}^{+\infty}
    (\|v_{k+1}-v_k\|+\|x_{k+1}-v_{k+1}\|)<+\infty$,
    and $\{(v_k,x_k,u_k)\}_{k\geq1}$ converges to a critical point of $\Loss$.

\end{enumerate}
\end{proposition}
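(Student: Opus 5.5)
We would follow the standard nonconvex ADMM template (in the style of Li--Pong and Wang--Yin--Zeng): use the augmented Lagrangian $\Loss$ as a merit function and verify the standard conditions \cref{sufficient_decrease}--\cref{precompactness} for $\Loss$ along $z_k\coloneqq(x_k,v_k,u_k)$. We would then invoke \cref{prop:generic_convergence} and \cref{prop:generic_convergence_kl}.

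\textbf{Step 1: the dual variable is a gradient.} By \cref{lem:coercive_regularizer} and \cref{lem:C_1_regularizer}, $v_{k+1}=\lpn(x_{k+1}+u_{k+1})\in\prox_{\reg}(x_{k+1}+u_{k+1})$, with $\reg$ a $C^1$ function with Lipschitz gradient. Hence
\begin{equation}
\nabla\reg(v_{k+1})=x_{k+1}+u_{k+1}-v_{k+1}.
\end{equation}
The optimality condition of the $x$-step reads $\eta\nabla\fid(x_{k+1})+x_{k+1}-v_k+u_k=0$. Combined with $u_{k+1}=u_k+x_{k+1}-v_k$, this gives $u_{k+1}=-\eta\nabla\fid(x_{k+1})$ for all $k\geq0$. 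The initialization $u_0=-\eta\nabla\fid(x_0)$ makes this hold at $k=0$ as well. Consequently
\begin{equation}
\|u_{k+1}-u_k\|\leq\eta L_f\|x_{k+1}-x_k\|
\quad\text{and}\quad
x_{k+1}-v_k=u_{k+1}-u_k .
\end{equation}
The second identity is what will control the term $\|x_k-v_k\|$ in part (ii).

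\textbf{Step 2: sufficient decrease.} We would split $\Loss_k-\Loss_{k+1}$ into the three block updates.
\begin{itemize}
\item The $v$-step does not increase $\Loss$. The map $v\mapsto\reg(v)+\tfrac12\|v-(x_{k+1}+u_{k+1})\|^2$ is convex because $\reg+\tfrac12\|\cdot\|^2=\potential^*$, and $v_{k+1}$ minimizes it.
\item The $u$-step increases $\Loss$ by exactly $\langle u_{k+1}-u_k,x_{k+1}-v_k\rangle=\|u_{k+1}-u_k\|^2$.
\item For the $x$-step we would exploit convexity of $\fid$ more sharply than plain $1$-strong convexity of the subproblem. Compare the subproblem values at $x_k$ and $x_{k+1}$ using the optimality condition. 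Together with cocoercivity of $\nabla\fid$ (a consequence of convexity and $L_f$-smoothness), this yields a decrease of at least $\tfrac12\|x_{k+1}-x_k\|^2+\tfrac{1}{2\eta L_f}\|u_{k+1}-u_k\|^2$.
\end{itemize}
Net, $\Loss_k-\Loss_{k+1}\ge\tfrac12\|\Delta x\|^2+\bigl(\tfrac{1}{2\eta L_f}-1\bigr)\|\Delta u\|^2$. If the bracket is negative, Step 1 bounds this below by $\bigl(\tfrac{1+t}{2}-t^2\bigr)\|\Delta x\|^2$ with $t=\eta L_f<1$, which is positive. Since $\|x_{k+1}-v_k\|=\|\Delta u\|\le\|\Delta x\|$, we also control the constraint violation. (If needed, we would use a small margin of the strongly convex $\potential^*$ to also extract $\|v_{k+1}-v_k\|^2$.)

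\textbf{Step 3: boundedness and relative error.} Substituting $u_k=-\eta\nabla\fid(x_k)$ and applying the descent lemma gives
\begin{equation}
\Loss_k\ge\eta\fid(v_k)+\reg(v_k)+\tfrac{1-\eta L_f}{2}\|x_k-v_k\|^2\geq F(v_k).
\end{equation}
So $\Loss_k$ is bounded below, and since $\Loss_k\le\Loss_0$, coercivity of $F$ (from \cref{lem:coercive_regularizer}) bounds $v_k$. The bound on $\|x_k-v_k\|$ then bounds $x_k$, and $u_k=-\eta\nabla\fid(x_k)$ is bounded too; this gives \cref{precompactness}. For \cref{relative_error}, we would compute the three partial gradients of $\Loss$ at $z_{k+1}$ using the optimality conditions. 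Each partial gradient is a combination of $\Delta x$, $\Delta v$ and $\Delta u$, hence bounded by a constant times $\|\Delta x\|+\|\Delta v\|$.

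\textbf{Main obstacle.} Parts (i) and (ii) then follow from \cref{prop:generic_convergence}. For (iii), \cref{prop:generic_convergence_kl} requires sufficient decrease in the full step $\|z_{k+1}-z_k\|$, including $\|\Delta v\|$. I expect this to be the hardest point. I would obtain it from
\begin{equation}
v_{k+1}-v_k=(x_{k+1}-x_k)+(u_{k+1}-u_k)-\bigl(\nabla\reg(v_{k+1})-\nabla\reg(v_k)\bigr),
\end{equation}
which bounds $\|\Delta v\|$ only implicitly. Instead I would bound it through Lipschitz continuity of $\lpn$, using that $\potential$ is $(L+\alpha)$-smooth or at least that its subdifferential is bounded on bounded sets. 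Failing that, I would adapt the Attouch--Bolte--Svaiter argument to a decrease measured only in $\|\Delta x\|$, transferring finite length to $v_k$ via the identity above.
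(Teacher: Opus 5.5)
Your argument for (i) and (ii) is essentially the paper's. There is a genuine gap in (iii).

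\textbf{Parts (i) and (ii).} Your blockwise bookkeeping reproduces the paper's computation of $\Loss_k-\Loss(x_{k+1},v_k,u_{k+1})$. The cocoercivity inequality, the constant $\tfrac{1+t}{2}-t^2$ (with $t=\eta L_f$), and the lower bound $\Loss_k\ge F(v_k)+\tfrac{1-t}{2}\|x_k-v_k\|^2$ are the same as the paper's. These parts follow directly from your decrease inequality, that lower bound, and $\|x_k-v_k\|\le(1+t)\|x_k-x_{k+1}\|$. Routing them through \cref{prop:generic_convergence} applied to $z_k$ would already require the $\|\Delta v\|$ term you lack.

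\textbf{Why your fixes for (iii) do not work.} Your primary fix, a Lipschitz bound on $\lpn$ or a strong-convexity margin for $\potential^*$, is not available under \cref{def:LPN}, which allows nonsmooth activations such as ReLU. In that case $\lpn$ is only a selection of $\partial\potential$ and can jump, and $\potential^*$ can be flat. For example, $\potential(x)=|x|+\tfrac{\alpha}{2}x^2$ is an LPN potential with $\potential^*(y)=\tfrac{1}{2\alpha}\bigl(\max\{|y|-1,0\}\bigr)^2$. Boundedness of $\partial\potential$ on bounded sets gives no step bound. So \cref{sufficient_decrease} in $\|z_{k+1}-z_k\|$ is out of reach, and \cref{prop:generic_convergence_kl} cannot be invoked.

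Your fallback points the right way but omits the ingredient that makes it work:
\begin{itemize}
\item Your relative-error bound at $z_{k+1}$ still contains $\|\Delta v\|$, so it does not close against a decrease measured only in $\|\Delta x\|$.
\item The identity you cite to transfer finite length to $v_k$ is circular. The difference $\|\nabla\reg(v_{k+1})-\nabla\reg(v_k)\|$ is only bounded by $(1+1/\alpha)\|\Delta v\|$, which returns $\|\Delta v\|$ with coefficient at least one.
\end{itemize}

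\textbf{The missing step.} Evaluate the gradient at $z_k$ and measure it by the forward step. Since $u_k=-\eta\nabla\fid(x_k)$ and $v_k$ is $v$-optimal, you get $\nabla_v\Loss(z_k)=0$ and
\[
\nabla_x\Loss(z_k)=\nabla_u\Loss(z_k)=x_k-v_k=(x_k-x_{k+1})+(u_{k+1}-u_k).
\]
Hence $\|\nabla\Loss(z_k)\|\le\sqrt2(1+t)\|x_k-x_{k+1}\|$. This is exactly the quantity your decrease controls.

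Now uniformize the desingularizing function over the compact set of cluster points, on which $\Loss\equiv\Loss_\infty$ and $\nabla\Loss=0$. With $a>0$ your decrease constant, concavity gives
\[
\varphi(\Loss_k-\Loss_\infty)-\varphi(\Loss_{k+1}-\Loss_\infty)\ge\frac{a\|x_k-x_{k+1}\|^2}{\|\nabla\Loss(z_k)\|}\ge\frac{a}{\sqrt2(1+t)}\|x_k-x_{k+1}\|,
\]
so $\{x_k\}$ has finite length.

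The other sequences then follow:
\begin{itemize}
\item For $\{u_k\}$, use $\|u_{k+1}-u_k\|\le t\|x_{k+1}-x_k\|$.
\item For $\{v_k\}$ and $\{x_k-v_k\}$, use $v_k=x_{k+1}-(u_{k+1}-u_k)$ from your own Step 1, not the $\nabla\reg$ identity.
\end{itemize}
This is how the paper closes (iii): it runs the KL argument by hand rather than through \cref{prop:generic_convergence_kl}.
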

\begin{proof}
    By the optimality of the $x$-update, $\eta \nabla\fid(x_{k+1}) + x_{k+1} - v_k+u_k=0$. From the definition of $u_{k+1}$, we get \begin{equation}\label{eq:admm_proof1}u_k = -\eta \nabla \fid (x_k), \quad \forall k \geq 0,\end{equation}
    since $u_0 \coloneqq -\eta \nabla \fid(x_0)$. Similarly, since $\lpn (x_{k+1}+u_{k+1}) \in \prox_{\reg}(x_{k+1}+u_{k+1})$, optimality of the $v$-update gives $\nabla \reg(v_{k+1}) = u_{k+1} + x_{k+1} - v_{k+1} = -\eta \nabla \fid(x_{k+1}) + x_{k+1} - v_{k+1}.$ We can conclude that $\nabla F(v_k) = \eta \nabla \fid (v_k) - \eta \nabla \fid (x_k) + x_k - v_k$, meaning $
        \|\nabla F(v_k)\| \leq (1+L)\|x_k - v_k\|.$ 
    Let $d_k \coloneqq x_k - x_{k+1}$, $g_k \coloneqq \eta \nabla \fid (x_k) - \eta \nabla \fid(x_{k+1})$. Then we have \begin{equation}\label{eq:admm_proof4}
        x_{k+1} - v_k =u_{k+1}-u_k= g_k, \quad x_k - v_k = d_k + g_k.
    \end{equation}
    Since $v_{k+1}$ minimizes $v \mapsto \Loss(x_{k+1},v, u_{k+1})$, we have \begin{equation}\label{eq:admm_proof5}
    \Loss_{k+1} \leq \Loss(x_{k+1},v_k, u_{k+1}). 
    \end{equation}
    From~\cref{eq:admm_proof1} and~\cref{eq:admm_proof4},
    \begin{equation}\label{eq:admm_proof11}
        \Loss_k - \Loss(x_{k+1},v_k, u_{k+1}) = \eta \fid(x_k) - \eta \fid(x_{k+1})-\langle \eta \nabla \fid(x_{k+1}),d_k\rangle + \frac{1}{2}\|d_k\|^2 - \|g_k\|^2. 
    \end{equation}
    Since $\eta \fid$ is convex and $L$-smooth for some $L<1$, \begin{equation}\label{eq:admm_proof6}
    \eta \fid(x_k) \geq \eta \fid(x_{k+1}) + \langle \eta \nabla \fid (x_{k+1}), d_k\rangle + \frac{1}{2L}\|g_k\|^2.    
    \end{equation}
    Combining~\cref{eq:admm_proof5,eq:admm_proof11,eq:admm_proof6}, $ \Loss_k - \Loss_{k+1} \geq \Loss_k - \Loss(x_{k+1},v_k,u_{k+1}) \geq \frac{1}{2}\|d_k\|^2 + \left( \frac{1}{2L}-1\right) \|g_k\|^2. $
    If $0<L\leq 1/2$, the second term is nonnegative, and therefore $\Loss_k-\Loss_{k+1} \geq \tfrac{1}{2}\|d_k\|^2$. If $1/2 < L < 1$, then $\tfrac{1}{2L}-1 < 0$. Since $\eta \fid$ is $L$-smooth, $\|g_k\| \leq L\|d_k\|$, and hence \begin{equation}
        \left(\frac{1}{2L}-1\right)\|g_k\|^2 \geq \left(\frac{L}{2}-L^2\right)\|d_k\|^2.
    \end{equation}
    Therefore, $\Loss_k - \Loss_{k+1} \geq \left(\frac{1}{2}+\frac{L}{2} - L^2\right)\|d_k\|^2,$ and we can conclude that \begin{equation}\label{eq:admm_proof8}
        \Loss_k - \Loss_{k+1} \geq C_L \|x_k - x_{k+1}\|^2, \quad C_L \coloneqq \begin{cases}
            \frac{1}{2}, & 0 < L \leq \frac{1}{2},\\ \frac{1}{2}+\frac{L}{2}-L^2 , & \frac{1}{2}< L < 1.
        \end{cases}
    \end{equation}
    Clearly $C_L > 0$ whenever $0 < L < 1$. Next,
    \begin{equation}
        \Loss_k = \eta \fid (x_k) + \reg(v_k) - \langle \eta \nabla \fid(x_k),x_k-v_k\rangle + \frac{1}{2}\|x_k-v_k\|^2.
    \end{equation}
    Since $\eta \fid $ is $L$-smooth, 
    \begin{equation} \label{eq:admm_proof10}
        \Loss_k \geq F(v_k) + \frac{1-L}{2}\|x_k-v_k\|^2.
    \end{equation}
    And in particular $\Loss_k \geq \inf F$. Combined with~\cref{eq:admm_proof8}, there exists some number $\Loss_\infty$ so that $\Loss_k \downarrow \Loss_\infty$, which is (i). 
Next, from~\cref{eq:admm_proof4} and $L$-smoothness, $\|x_k-v_k\|\leq \|d_k\|+\|g_k\| \leq (1+L)\|d_k\|.$ Hence, $\|x_k-v_k\|^2 \leq (1+L)^2 \|x_k-x_{k+1}\|^2.$ Summing~\cref{eq:admm_proof8} from $k=0$ to $k=K$ gives $C_L \sum_{k=0}^K \|x_k -x_{k+1}\|^2 \leq \Loss_0 - \Loss_{k+1}.$ Combining with the previous inequality yields \begin{equation}
        \sum_{k=0}^K (\|x_k-v_k\|^2 + \|x_k-x_{k+1}\|^2) \leq \frac{1+(1+L)^2}{C_L}(\Loss_0-\Loss_{K+1}).
    \end{equation}
    Taking $K\to \infty $ gives us (ii). 

    Next, let $z_k \coloneqq (x_k,v_k,u_k)$, and assume $\Loss$ satisfies the KL property. By (ii), $\|d_k\|\to 0$ and $\|x_k-v_k\| \to 0$. From~\cref{eq:admm_proof1} and the optimality of the $v$-update, we have
    \begin{align}
        \nabla _x \Loss(x_k,v_k,u_k) &= \eta \nabla \fid(x_k) + u_k + x_k - v_k = x_k - v_k,\nonumber \\
        \nabla_v \Loss(x_k,v_k,u_k)  &= \nabla \reg(v_k) - u_k -x_k + v_k = 0,\\
        \nabla_u \Loss(x_k,v_k,u_k)  &=x_k - v_k\nonumber.
    \end{align}
    Since $x_k-v_k = x_k - x_{k+1} + \eta \nabla \fid(x_k) - \eta \nabla \fid(x_{k+1}),$ \begin{equation}\label{eq:admm_proof12}
         \|\nabla \Loss(z_k)\| =\sqrt{2}\|x_k-v_k\|\leq \sqrt{2}(1+L)\|d_k\| .
     \end{equation} By~\cref{eq:admm_proof10} and the monotonicity of $\Loss_k$, $F(v_k) \leq \Loss_k \leq \Loss_0$. Since $F$ is coercive, $\{v_k\}$ is bounded. \cref{eq:admm_proof10} also implies $\{x_k-v_k\}$, and hence $\{x_k\}$ are bounded. Finally,  $\{u_k\}$ is bounded by~\cref{eq:admm_proof1}, since $\nabla \fid$ is continuous. Therefore $\{z_k\}$ is bounded, and the set of cluster points, which we denote $\Omega$ is nonempty and compact. By (i), $\Loss_k \to \Loss_\infty$. Therefore, if $z^* \in \Omega$, there exists a subsequence $z_{k_j} \to z^*$, and continuity of $\Loss$ means $\Loss(z^*) = \lim_{j\to \infty}\Loss(z_{k_j}) = \Loss_\infty.$
     Since $\|d_k\| \to 0$, and by~\cref{eq:admm_proof12}, $\|\nabla\Loss(z_k)\| \to 0$, meaning $\nabla \Loss(z^*) = 0$. Thus every point of $\Omega$ is a critical point of $\Loss$ and $\Loss$ takes the value $\Loss_\infty$ on $\Omega$.

    By the KL property, for every $z^* \in \Omega$, there is a neighborhood $U_{z^*}$, some $\eta_{z^*} > 0$ and a concave $C^1$ desingularizing function $\varphi_{z^*}$ such that $\varphi_{z^*}'(\Loss(z)-\Loss_\infty)\|\nabla \Loss(z)\| \geq 1,$
     whenever $z \in U_{z^*}$ and $0 < \Loss(z) - \Loss_\infty < \eta_{z^*}$. $\Omega$ is compact, so we may cover it with finitely many of these neighborhoods, call them $U_1,...,U_m$, let $\bar \eta = \min_{1 \leq i\leq m}\eta_i$, and $\varphi(s) = \sum_{i=1}^m \varphi_i(s)$ for $0 \leq s < \bar \eta$. This $\varphi$ is still concave, $C^1$ on $(0,\bar \eta)$, and has a positive derivative. Further, when $z \in U_i$, $\varphi'(\Loss(z)-\Loss_\infty)\|\nabla \Loss(z)\| \geq 1,$
     Clearly $\mathrm{dist}(z_k ,\Omega) \to 0$, so $z_k \in U_1\cup...\cup U_m$ for sufficiently large $k$. Since $\Loss_k \to \Loss_\infty$, the KL inequality applies for sufficiently large $k$, i.e., $\varphi'(\Loss_k - \Loss_\infty) \geq \frac{1}{\|\nabla \Loss(z_k)\|}$. By concavity of $\varphi$, $\varphi(\Loss_k-\Loss_\infty)- \varphi(\Loss_{k+1}-\Loss_\infty) \geq \varphi'(\Loss_k-\Loss_\infty)(\Loss_k - \Loss_{k+1}),$ and from the KL inequality,~\cref{eq:admm_proof8}, and~\cref{eq:admm_proof12},
     \begin{equation}
         \varphi(\Loss_k - \Loss_\infty) - \varphi(\Loss_{k+1}-\Loss_\infty) \geq \frac{C_L \|d_k\|^2}{\|\nabla \Loss(z_k)\|} \geq \frac{C_L}{\sqrt{2}(1+L)}\|d_k\|.
     \end{equation}
     Summing over all sufficiently large $k$, we get $\sum_{k=0}^\infty\|x_{k+1}-x_k\| < \infty$. Since $\|x_k-v_k\| \leq (1+L)\|d_k\|$, we have $\sum_{k=0}^\infty \|x_k - v_k\| < \infty$. Next, $u_{k+1}-u_k = g_k$ and $\|g_k\| \leq L\|d_k\|$, so $\sum_{k=0}^\infty \|u_{k+1}-u_k\| < \infty$. Finally, since $v_k = x_{k+1}-g_k$, $\|v_{k+1}-v_k\| \leq \|x_{k+2}-x_{k+1}\| + \|g_{k+1}\| + \|g_k\| \leq (1+L) \|d_{k+1}\| + L\|d_k\|$. Therefore, $\sum_{k=0}^\infty \| v_{k+1}-v_k\| < \infty$. Thus all three sequences $\{x_k\},\{v_k\}$, and $\{u_k\}$ have finite length and converge. Therefore $z_k \to z^*$ for some $z^*\in \Omega$, and the result follows. 
\end{proof}

\putbib[ref]
\end{bibunit}

\end{document}